\theoremstyle{plain} \newtheorem{theorem}{Theorem}[subsection]
\theoremstyle{plain} \newtheorem{proposition}[theorem]{Proposition}
\theoremstyle{plain} \newtheorem{lemma}[theorem]{Lemma}
\theoremstyle{plain} 
\theoremstyle{plain} \newtheorem{corollary}[theorem]{Corollary}
\theoremstyle{plain} \newtheorem{claim}[theorem]{Claim}
\theoremstyle{plain} 
\theoremstyle{plain} 
\theoremstyle{plain} 
\theoremstyle{plain} 
\theoremstyle{remark} 
\theoremstyle{definition} \newtheorem{remark}[theorem]{Remark}
\theoremstyle{definition} \newtheorem{definition}[theorem]{Definition}
\theoremstyle{definition} \newtheorem{theorem-definition}[theorem]{Theorem-Definition}
\theoremstyle{definition} 
\theoremstyle{plain} \newtheorem{maintheorem}{Theorem}
\theoremstyle{plain} 
\theoremstyle{plain} \newtheorem{maincorollary}[maintheorem]{Corollary}
\numberwithin{equation}{section}
\newcommand\Hom{\mathrm{Hom}}
\newcommand\Diff{\mathrm{Diff}}
\newcommand\R{\mbox{$\mathbb{R}$}}
\newcommand\Z{\mbox{$\mathbb{Z}$}}
\newcommand\into\hookrightarrow
\newcommand\bs{\setminus}
\def\co{\colon\thinspace}
\newcommand\ptI{\mathcal{I}_*}
\newcommand\ptIg{\mathcal{I}_{g,*}}
\newcommand\ptM{\mathrm{Mod}(\Sigma,*)}
\newcommand\clM{\mathrm{Mod}(\Sigma)}
\newcommand\clMn{\mathrm{Mod}}
\newcommand\johom{\tau}
\newcommand\Fl{\mathrm{Flux}}
\newcommand\Symp{\mathrm{Symp}(\Sigma)}
\newcommand\Sympn{\mathrm{Symp}}
\newcommand\Symps{\mathrm{Symp}(\Sigma,*)}
\newcommand\ISymp{\mathrm{ISymp}(\Sigma)}
\newcommand\ISymps{\mathrm{ISymp}(\Sigma,*)}
\newcommand\Sympo{\mathrm{Symp}_0(\Sigma)}
\newcommand\Sympon{\mathrm{Symp}_0}
\newcommand\Sympso{\mathrm{Symp}_0(\Sigma,*)}
\newcommand\Ham{\mathrm{Ham}(\Sigma)}
\newcommand\Hamn{\mathrm{Ham}}
\newcommand\Flsec[1]{A_{#1}}
\newcommand\Flmet[1]{\widetilde F_{#1}}
\newcommand\Fljac[1]{\Fl^X_{#1}}
\newcommand\angb[1]{\langle #1\rangle}
\newcommand\ai{\hat \imath} 
\newcommand\supp{\mathrm{supp}\,}
\title{Extended flux maps on surfaces and the contracted Johnson homomorphism}
\author{Matthew B. Day}
\date{September 3, 2009}
\begin{document}
\maketitle
\begin{abstract}
On a closed symplectic surface $\Sigma$ of genus two or more, we give a new construction of an extended flux map (a crossed homomorphism from the symplectomorphism group $\mathrm{Symp}(\Sigma)$ to the cohomology group $H^1(\Sigma;\mathbb{R})$ that extends the flux homomorphism).
This construction uses the topology of the Jacobian of the surface and a correction factor related to the Johnson homomorphism.
For surfaces of genus three or more, we give another new construction of an extended flux map using hyperbolic geometry.
\end{abstract}

\section{Introduction}
\subsection{Background}
Let $\Sigma=\Sigma_{g,*}$ be a closed, oriented surface of genus $g\geq 2$ with a basepoint~$*$.
Let $\omega_\Sigma$ be a symplectic form (an area form) on $\Sigma$.
Let $\Symp=\Sympn(\Sigma;\omega_\Sigma)$ be the symplectomorphism group of $(\Sigma,\omega_\Sigma)$, which is the subgroup of diffeomorphisms $\phi\in\Diff(\Sigma)$ with $\phi^*\omega_\Sigma=\omega_\Sigma$.
Let $\Sympo=\Sympon(\Sigma;\omega_\Sigma)$ be the subgroup of $\Symp$ consisting of symplectomorphisms homotopic to the identity.

There is a homomorphism $\Fl\co \Sympo\to H^1(\Sigma;\R)$ called the \emph{flux homomorphism}.
By the universal coefficient theorem, we represent $\Fl$ as a homomorphism to $\Hom(H_1(\Sigma),\R)$.
Let $\phi\in \Sympo$, and
pick a smooth homotopy $\phi_t$ from the identity to $\phi=\phi_1$.
Represent a class in $H_1(\Sigma)$ by
$\gamma_*[S^1]$ for a smooth $\gamma\co S^1\to\Sigma$.
Define $K\co S^1\times [0,1]\to\Sigma$ by dragging $\gamma$ along $\phi_t$; specifically, $K(x,t)=\phi_t(\gamma(x))$.
Then
\[\Fl(\phi)(\gamma_*[S^1])=\int_{K_*[S^1\times[0,1]]}\omega_\Sigma.\]
Intuitively, $\Fl(\phi)(\gamma_*[S^1])$ measures the area that moves across $\gamma_*[S^1]$ in flowing from the identity to $\phi$. 
See Section~10.2 of McDuff--Salamon~\cite{mcdsal} 
for discussion of flux homomorphisms.  
Also see Remark~\ref{re:surfflux} below.

A \emph{crossed homomorphism} from a group $G$ to a $G$--module $M$ is a map $f\co G\to M$ that obeys a twisted homomorphism identity:
\[f(a b)=a\cdot f(b)+f(a),\quad \text{ for $a,b\in G$}.\]
This is the same thing as a group cohomology $1$--cocycle in $Z^1(G;M)$.
The group $H^1(\Sigma;\R)$ is a left $\Symp$--module via the action $(\phi\cdot\alpha)([c])=\alpha(\phi_*^{-1}[c])$, for $\phi\in\Symp$, $\alpha\in H^1(\Sigma;\R)$ and $[c]\in H_1(\Sigma;\R)$.
An \emph{extended flux map} is a crossed homomorphism $\Symp\to H^1(\Sigma;\R)$ that extends the flux homomorphism.
The study of extended flux maps was initiated by Kotschick--Morita~\cite{km}, who showed the existence of a cohomology class of maps extending flux.
McDuff gave a concrete example of an extended flux map on surfaces in Remark~4.7 of~\cite{mcd}.

Let $\Symps=\Sympn(\Sigma,*;\omega_\Sigma)$ be the subgroup of symplectomorphisms in $\Symp$ fixing the basepoint $*\in\Sigma$, and let $\Sympso$ be the connected component of the identity.
In this paper, we construct two crossed homomorphisms from $\Symps$ to $H_1(\Sigma;\R)$ that agree with $\Fl$ on $\Sympso$.
The first is a modified version of McDuff's extended flux map from~\cite{mcd}, and the second uses the Jacobian torus of $\Sigma$ to measure flux.
We compute the differences of these maps with the restriction of an extended flux map.
Our computation shows that these maps do not agree with $\Fl$ on $\Symps\cap\Sympo$, which is strictly larger than $\Sympso$.
We then explain a way to modify these constructions to produce extended flux maps.
We end the paper by giving a new construction of an extended flux map on a surface of genus $g\geq 3$, using hyperbolic geometry.

\subsection{Summary of basepoint-preserving constructions}
First we define a crossed homomorphism on $\Symps$ that is a slight variation of McDuff's definition of an extended flux map on surfaces from~\cite{mcd}, Remark~4.7.
One interpretation of $\Fl$ is that it measures the area cobounded by a cycle and its push-forward under a symplectomorphism in $\Sympo$ (so that the push-forward cycle is homologous).
This idea can be extended to arbitrary symplectomorphisms by fixing a set of specific cycles for reference.
Fix a homomorphism $s\co H_1(\Sigma)\to Z_1(\Sigma\bs\{*\})$ such that the piecewise-smooth cycle $s([c])$ represents $[c]$ for each $[c]\in H_1(\Sigma)$.
We define the \emph{section-based area difference map} based on $s$,
\[\Flsec{s}\co \Symp\to H^1(\Sigma;\R)\]
by setting $\Flsec{s}(\phi)([c])$ to be the area cobounded by $s(\phi^{-1}_*[c])$ and $\phi^{-1}_*s([c])$.
We use the basepoint in this construction to decide which of the regions bounded by these two cycles to measure. 
We give our definition more precisely and prove basic lemmas about this map in Section~\ref{ss:flsec} below.

Our second construction is a crossed homomorphism on $\Symps$ that measures flux in the Jacobian torus of the surface.
Let 
\[X=H_1(\Sigma;\R)/H_1(\Sigma;\Z)\cong (S^1)^{2g}\] 
be the Jacobian torus of $\Sigma$.
As we explain in Section~\ref{ss:jacobianprelim}, $X$ has a natural symplectic structure and there is a symplectic Abel--Jacobi map $J\co \Sigma\to X$.
We also show in that section that this forces a particular normalization convention, namely that $\mathrm{Area}(\Sigma,\omega_\Sigma)=g$.

The action of $\Symp$ on $H_1(\Sigma;\R)$ induces a basepoint-preserving, symplectic action on $X$.
Denote this action by $\rho\co \Symp\to\Diff(X)$.
Note that for any $\phi\in\Symps$, we have
\[(J\circ \phi^{-1})_*=(\rho(\phi^{-1})\circ J)_*\co\pi_1(\Sigma,*)\to\pi_1(X,0).\]
Since these smooth maps are between aspherical manifolds and they induce the same map on fundamental groups, we have a smooth homotopy $K\co \Sigma\times[0,1]\to X$ from  $J\circ \phi^{-1}$ to $\rho(\phi^{-1})\circ J$, relative to the basepoint.
Given a cycle $c\in Z_1(\Sigma)$, we can use $K$ to measure a kind of flux of $\phi$ across $c$.
Specifically, define a chain $C\in C_2(X)$ by dragging $J_*c$ along $K$.
The \emph{Jacobian flux crossed homomorphism} 
\[\Fljac{J}\co \Symps\to H^1(\Sigma;\R)\]
is given by $\Fljac{J}(\phi)([c])=\int_C\omega_X$, with $\phi$, $c$ and $C$ as above.
We state this definition more precisely in Section~\ref{ss:fljac}.
We also show in that section that $\Fljac{J}$ is a well-defined crossed homomorphism agreeing with $\Fl$ on ${\Sympso}$ and that it is independent of all the choices except~$J$.

\subsection{Comparison results}

Next we compare $\Flsec{s}$ and $\Fljac{J}$ to an extended flux map by computing their differences.
Since these maps agree on $\Sympso$, their differences are constant on each connected component of $\Symps$ and are topological invariants.
In Theorem~\ref{mt:hypvssec} and Theorem~\ref{mt:jacvssec} below, we identify these topological invariants.

Let $\ptM=\Diff^+(\Sigma,*)/\Diff^0(\Sigma,*)$ be the mapping class group of $\Sigma$ relative to $*$, which is the group of orientation-preserving diffeomorphisms of $\Sigma$ fixing $*$ modulo equivalence by homotopy relative to $*$.
The \emph{Torelli group} $\ptI=\ptIg$ is the subgroup of $\ptM$ of classes acting trivially on $H_1(\Sigma)$.
There is a homomorphism $\johom\co \ptI\to\bigwedge^3H_1(\Sigma)$ called the \emph{Johnson homomorphism}.
We discuss this map in Section~\ref{ss:johnsonprelim}.

Let $\ISymp$ denote the subgroup of $\Symp$ acting trivially on $H_1(\Sigma)$, and let $\ISymps$ denote its subgroup fixing $*$.
As we explain in Section~\ref{ss:diffmetsec}, every extended flux map has the same restriction to $\ISymp$.
The projection $\Diff^+(\Sigma,*)\to\ptM$ restricts to a map $p\co \ISymps\to\ptI$.
Let $D_\Sigma\co H^1(\Sigma)\to H_1(\Sigma)$ denote the Poincar\'e duality map.
In Section~\ref{ss:johnsonprelim}, we discuss the symplectic contraction map $\Phi\co \bigwedge^3H_1(\Sigma)\to H_1(\Sigma)$.
The constants of proportionality in the following theorems depend on our convention that
 $\mathrm{Area}(\Sigma,\omega_\Sigma)=g$.

\begin{maintheorem}\label{mt:hypvssec}
For any choice of $s$ as above and any extended flux map $F$, 
\[(\Flsec{s}-F)|_{\ISymps}=\frac{g}{g-1}D_\Sigma^{-1}\circ\Phi\circ\johom\circ p.\]
\end{maintheorem}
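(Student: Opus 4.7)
The plan is to show that $(\Flsec{s}-F)|_{\ISymps}$ is an ordinary homomorphism that descends to $\ptI$, and then to identify the resulting homomorphism with $\frac{g}{g-1}D_\Sigma^{-1}\circ\Phi\circ\johom$. First observe that the action of $\Symp$ on $H^1(\Sigma;\R)$ factors through $\Sp(H_1(\Sigma;\Z))$, so $\ISymps$ acts trivially on $H^1(\Sigma;\R)$. The difference of two crossed homomorphisms for the same action is again a crossed homomorphism; restricted to $\ISymps$, the twisted identity collapses to the ordinary one, and $(\Flsec{s}-F)|_{\ISymps}$ is a homomorphism into $H^1(\Sigma;\R)$. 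Since both $\Flsec{s}$ and $F$ agree with $\Fl$ on $\Sympso$, their difference vanishes on $\Sympso$, so it descends through $p\co \ISymps\to\ptI$ to a homomorphism $h\co\ptI\to H^1(\Sigma;\R)$. This reduces the theorem to the identification $h=\frac{g}{g-1}D_\Sigma^{-1}\circ\Phi\circ\johom$.

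Next I would evaluate $h$ geometrically. For $\phi\in\ISymps$, because $\phi_*^{-1}=\mathrm{id}$ on $H_1$, the value $\Flsec{s}(\phi)([c])$ is the integral of $\omega_\Sigma$ over any 2-chain $C$ with $\partial C=\phi_*^{-1}s([c])-s([c])$. Applying Poincar\'e duality, I would rewrite the statement as an equality of elements of $H_1(\Sigma;\R)$: for each class $[c]$, one needs
\[\bigl(\Flsec{s}(\phi)-F(\phi)\bigr)([c])\ =\ \tfrac{g}{g-1}\,\Phi(\johom(p(\phi)))\cdot[c],\]
where $\cdot$ is the algebraic intersection number. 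Since $(\Flsec{s}-F)|_{\ISymps}$ does not depend on the particular representative $F$ (by the remark in Section \ref{ss:diffmetsec} that all extended flux maps have the same restriction to $\ISymp$), I would fix a convenient choice of $F$ — for instance, a map built from a symplectic basis as in Kotschick--Morita or McDuff's original construction — so that $F(\phi)([c])$ is also expressed as an area integral over a reference 2-chain in $\Sigma$. The difference then becomes the signed area of a closed 2-chain built entirely from how $\phi$ displaces the sections $s([c])$ relative to the basis-dependent reference.

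The main obstacle is to recognize this area-difference 2-chain as computing $\Phi\circ\johom(p(\phi))$. For this I would rely on the standard homological description of the Johnson homomorphism: given $\phi\in\ptI$ and a symplectic basis $a_1,b_1,\ldots,a_g,b_g$ of $H_1(\Sigma)$, one constructs, for each basis cycle $\alpha$, a 2-chain $D_\alpha(\phi)$ with $\partial D_\alpha(\phi)=\phi_*\alpha-\alpha$, and $\johom(\phi)$ records the intersections $[D_\alpha(\phi)]\cap\beta\in H_1$ packaged across a basis to give an element of $\bigwedge^3 H_1$. The symplectic contraction $\Phi$ precisely extracts, out of this packaging, the intersection with the Poincar\'e dual of $\omega_\Sigma$, which is what one is integrating when one computes $\int_C\omega_\Sigma$ for $C$ the 2-chain produced by $s$. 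I would verify the formula on a generating set of $\ptI$ (e.g.\ bounding pair maps and separating twists, where both $\johom$ and the area integrals can be computed by elementary topology) to pin down both the identification and the leading constant.

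Finally, the constant $\frac{g}{g-1}$ comes from the normalization $\mathrm{Area}(\Sigma,\omega_\Sigma)=g$ combined with the factor introduced by $\Phi$. I would check it on a single bounding pair map $T_{c}T_{c'}^{-1}$ supported in a subsurface of genus one, where $\johom$ is a standard product of basis classes, the symplectic contraction is a single homology class, and the section-based area difference reduces to the area of an explicit sub-surface bordism — thereby fixing the constant and completing the proof.
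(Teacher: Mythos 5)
Your reductions at the start are fine and agree with what the paper itself uses: the action of $\ISymps$ on $H^1(\Sigma;\R)$ is trivial, so $(\Flsec{s}-F)|_{\ISymps}$ is an ordinary homomorphism vanishing on $\Sympso$, and all extended flux maps have the same restriction to $\ISymp$ (Lemma~\ref{le:unISymp}). The genuine gap is in the core step. You propose to prove the identification by evaluating both sides on a generating set of $\ptI$ (bounding pair maps and separating twists). But those mapping classes are \emph{not} represented in $\Sympo$, so the defining property of an extended flux map (agreement with $\Fl$ on $\Sympo$) gives no access to $F$ on them: for McDuff's map you would have to control the Chern-number correction on the relevant chains, and the Kotschick--Morita class is not given by a formula you can integrate. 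Evaluating $F|_{\ISymps}$ on such generators is essentially the content of Theorem~\ref{mt:hypvssec} itself, so the proposed verification is unaddressed rather than elementary; note also that on separating twists both sides vanish, so they cannot fix the constant. In addition, the ``standard homological description'' of $\johom$ you invoke --- $2$--chains $D_\alpha$ with $\partial D_\alpha=\phi_*\alpha-\alpha$ whose ``intersections with $\beta$'' lie in $H_1$ --- is not correct as stated (a $2$--chain in a surface has no such intersection class); the precise statement of this kind is obtained in the mapping torus and via the Jacobian (Lemmas~\ref{le:final} and~\ref{le:contract}), and the paper uses it to prove Theorem~\ref{mt:jacvssec}, not Theorem~\ref{mt:hypvssec}.

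By contrast, the paper's proof avoids ever computing $F$ on a Torelli element: the only tractable elements are Hamiltonian point-pushing maps, where $F=\Fl=0$, where $\Flsec{s}$ is computed directly (Lemma~\ref{le:pushmap} gives $g\cdot\ai([a],[b])$), and where $\Phi\circ\johom$ is known from Johnson's evaluation (Corollary~\ref{co:cjhvalue} gives $(g-1)[a]$). It then leverages these alone, using that $\Symps\cap\Sympo$ is generated by $\Sympso$ and Hamiltonian point-pushes (Lemma~\ref{le:sympssympogens}), that a crossed homomorphism on $\Symps$ agreeing with $\Fl$ on $\Symps\cap\Sympo$ extends uniquely to an extended flux map (Lemma~\ref{le:unext}), and the Kotschick--Morita uniqueness on $\ISymp$. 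If you want to keep your descent-based structure, the missing mechanism can be supplied differently: show the difference descends to a crossed homomorphism on all of $\ptM$ (this needs the Moser-type identification $\ker(\Symps\to\ptM)=\Sympso$, which you leave implicit), invoke Morita's computation $H^1(\ptM;H_1(\Sigma;\R))\cong\R$ quoted in Section~\ref{ss:johnsonprelim} to conclude the restriction to $\ptI$ is a scalar multiple of $\Phi\circ\johom$, and then fix the scalar on a single Hamiltonian point-push --- not on a bounding pair map. As written, your proposal contains neither mechanism, so the identification and the constant $\tfrac{g}{g-1}$ are not established.
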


\begin{maintheorem}\label{mt:jacvssec}
For any choices of $s$ and $J$ as above,
\[(\Flsec{s}-\Fljac{J})|_{\ISymps}=D_\Sigma^{-1}\circ\Phi\circ\johom\circ p.\]
\end{maintheorem}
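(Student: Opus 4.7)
The strategy is to express the difference, evaluated on a class $[c]\in H_1(\Sigma)$, as the pairing of $[\omega_X]$ with an explicit 2-cycle $Z$ in the Jacobian torus $X$, and then to identify $[Z]\in H_2(X;\R)\cong\bigwedge^2 H_1(\Sigma;\R)$ with a contraction of $\johom(\psi)$ using the mapping torus of $\phi$.

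Both $\Flsec{s}$ and $\Fljac{J}$ are crossed homomorphisms agreeing with $\Fl$ on $\Sympso$; since $\ISymps$ acts trivially on $H^1(\Sigma;\R)$ and contains $\Sympso$, the restriction $(\Flsec{s}-\Fljac{J})|_{\ISymps}$ is an honest homomorphism vanishing on $\Sympso$, so it descends to a homomorphism $\ptI\to H^1(\Sigma;\R)$ through $p$.  Fix $\phi\in\ISymps$ with $p(\phi)=\psi$ and take $\gamma=s([c])$.  Let $D_s\in C_2(\Sigma)$ satisfy $\partial D_s=\phi^{-1}_*\gamma-\gamma$, so $\Flsec{s}(\phi)([c])=\int_{D_s}\omega_\Sigma$.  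Let $K\co\Sigma\times[0,1]\to X$ be a homotopy from $J\circ\phi^{-1}$ to $\rho(\phi^{-1})\circ J=J$ (the latter equality holds because $\phi$ acts trivially on $H_1(\Sigma;\R)$), so $\Fljac{J}(\phi)([c])=\int_C\omega_X$ with $C=K_*(\gamma\times[0,1])$.  Using the normalization $J^*\omega_X=\omega_\Sigma$ forced by $\mathrm{Area}(\Sigma,\omega_\Sigma)=g$, rewrite $\int_{D_s}\omega_\Sigma=\int_{J_*D_s}\omega_X$.  A short boundary calculation shows $Z:=J_*D_s-C\in Z_2(X)$, and hence
\[(\Flsec{s}(\phi)-\Fljac{J}(\phi))([c])=\int_Z\omega_X=\angb{[\omega_X],[Z]}.\]
The class $[Z]$ is independent of the choices of $D_s$ and $K$ by Stokes' theorem applied to the 3-chains filling the ambiguities.

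The heart of the proof is the identification $[Z]=\iota_{[c]}\johom(\psi)$, where $\iota_{[c]}\co\bigwedge^3 H_1(\Sigma)\to\bigwedge^2 H_1(\Sigma)$ denotes the linear map that contracts one factor against $[c]$ via the symplectic intersection form.  Taking a diffeomorphism representative of $\psi$ and forming the mapping torus $M_\phi$, the homotopy $K$ assembles with $J$ into a well-defined Abel--Jacobi-type extension $\bar K\co M_\phi\to X$ (the compatibility $K(x,1)=J(x)=J(\phi^{-1}(\phi(x)))=K(\phi(x),0)$ is automatic).  The cylinder $\gamma\times[0,1]$ capped at $t=0$ by $-\phi_*D_s\times\{0\}$ descends to a 2-cycle $T_\gamma\in Z_2(M_\phi)$ whose image under $\bar K$ is $-Z$ on the chain level.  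Under the fibration $M_\phi\to S^1$, the class $[T_\gamma]\in H_2(M_\phi)$ is the slant product of $[M_\phi]$ with a fiber-cohomology class dual to $[c]$.  Johnson's description of $\johom$ identifies $\johom(\psi)$ with the $\bigwedge^3 H_1(\Sigma) = H_3(X)$ component of $\bar K_*[M_\phi]$ under the Abel--Jacobi map of $M_\phi$ into its Jacobian $X\times S^1$.  Naturality of the slant product then identifies $\bar K_*[T_\gamma]$ with $-\iota_{[c]}\johom(\psi)$, so $[Z]=\iota_{[c]}\johom(\psi)$ after signs.

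The final step is algebraic.  Direct expansion shows $\angb{[\omega_X],\iota_{[c]}w}=\Phi(w)\cdot[c]$ for every $w\in\bigwedge^3 H_1(\Sigma)$, since $\angb{[\omega_X],x\wedge y}$ is the symplectic intersection $\angb{x,y}$ and $\Phi$ is the cyclic-sum of symplectic contractions.  Combined with the fact that $D_\Sigma^{-1}h\in H^1(\Sigma)$ evaluated on $[c]$ equals the algebraic intersection $h\cdot[c]$, this yields
\[(\Flsec{s}(\phi)-\Fljac{J}(\phi))([c])=\Phi(\johom(\psi))\cdot[c]=\bigl(D_\Sigma^{-1}\Phi\johom p(\phi)\bigr)([c]),\]
as required.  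The main obstacle is the mapping-torus identification of the middle paragraph: pinning down signs, verifying the slant-product computation cleanly, and matching the convention in Johnson's Abel--Jacobi description of $\johom$ to the precise construction of $\bar K$.  An alternative would be to check the formula directly on Johnson's bounding-pair generators of $\ptI$ for $g\geq 3$, but the approach above is uniform and, together with the analogous argument for Theorem~\ref{mt:hypvssec}, explains transparently why the constant of proportionality here is $1$ rather than $g/(g-1)$.
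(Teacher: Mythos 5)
Your overall strategy is the one the paper uses: express the difference evaluated on $[c]$ as $\int_Z\omega_X$ for a $2$--cycle $Z$ assembled from the two defining chains, identify $[Z]$ with a contraction of $K_*[M_\phi]=\johom(p(\phi))$ via Johnson's third definition, and finish with the adjointness identity $\angb{[\omega_X]\wedge\alpha,w}=\angb{\alpha,\Phi(w)}$ (the paper's Lemma~\ref{le:contractadjoint}); the paper merely phrases the middle step as a Poincar\'e duality computation inside the mapping torus $M_\phi$ (Lemmas~\ref{le:H1basis}--\ref{le:final}) rather than as a slant product after pushing forward to $X$. Your closing algebra is correct.

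The gap sits exactly where you flag ``the main obstacle,'' and it is more than signs. A ``fiber-cohomology class dual to $[c]$'' does not determine a class on $M_\phi$: its lifts to $H^1(M_\phi)$ differ by multiples of the class $[\theta]$ pulled back from the base circle, so the intersection computations you have in mind only pin down the Poincar\'e dual of $[T_\gamma]$ up to a term $m[\theta]$. This ambiguity is not harmless: the dual of $[\theta]$ is $(i_0)_*[\Sigma]$, so a nonzero $m$ shifts $\bar K_*[T_\gamma]$ by $m\,J_*[\Sigma]$ and shifts $\int_Z\omega_X$ by $mg$, since $\angb{[\omega_X],J_*[\Sigma]}=g$. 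The paper kills $m$ by pairing against the circle $z$ traced out by the basepoint in $M_\phi$ and using that $J_*z$ is a point (end of Lemma~\ref{le:final}); some argument of this kind, exploiting that all homotopies are taken relative to $*$, is unavoidable and ``naturality of the slant product'' alone will not supply it. The same issue resurfaces earlier in your write-up: you must take $D_s\in C_2(\Sigma\bs\{*\})$, not $C_2(\Sigma)$, since otherwise $\int_{D_s}\omega_\Sigma$ is only defined modulo $g$ and $[Z]$ only modulo $\Z\cdot J_*[\Sigma]$. Finally, with the paper's convention the bounding condition is $\partial D_s=\gamma-\phi^{-1}_*\gamma$; with the sign you wrote, $J_*D_s-C$ has boundary $2(J_*\phi^{-1}_*\gamma-J_*\gamma)$ and is not a cycle.
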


It follows immediately from these theorems that neither $\Flsec{s}$ nor $\Fljac{J}$ agrees with $\Fl$ on $\Symps\cap\Sympo$ (for any choice of $s$ or $J$).
However, we can use our methods to recover true extended flux maps from $\Flsec{s}$ and $\Fljac{J}$.
Of course in the case of $\Flsec{s}$,  McDuff has constructed an extended flux map this way, but this is an alternative approach.

\begin{maincorollary}\label{mc:eflmaps}
Let $\epsilon\co \ptM\to H^1(\Sigma;\R)$ be any crossed homomorphism extending $\Phi\circ\johom$, and let $s$ and $J$ be any choices as above.
Then 
\[\Flsec{s}-\frac{g}{g-1}D_\Sigma\circ\epsilon\circ p\co \Symps\to H^1(\Sigma;\R)\]
and
\[\Fljac{J}+\frac{1}{g-1}D_\Sigma\circ\epsilon\circ p\co\Symps\to H^1(\Sigma;\R)\]
extend to extended flux maps on $\Symp$.
\end{maincorollary}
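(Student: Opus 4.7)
The plan is to invoke the existence of an extended flux map $F$ (established by Kotschick--Morita~\cite{km} and McDuff~\cite{mcd}), and show that each of the two proposed maps on $\Symps$ differs from $F|_{\Symps}$ by a crossed homomorphism that factors through the symplectic representation. Once this factoring is in place, pulling back along the surjection $\Symp\twoheadrightarrow\Sp(2g,\Z)$ produces the desired extension to $\Symp$.

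First, each proposed map $G_i$ is a crossed homomorphism on $\Symps$, being a linear combination of the crossed homomorphisms $\Flsec{s}|_{\Symps}$, $\Fljac{J}$, and $\epsilon\circ p$; the last is a crossed homomorphism as the composition of the crossed homomorphism $\epsilon$ with the group homomorphism $p\co\Symps\to\ptM$, and $D_\Sigma$ is $\Symp$-equivariant. Setting $H_i = G_i - F|_{\Symps}$, and using the hypothesis that $\epsilon$ restricts to $\Phi\circ\johom$ on $\ptI$ (under the Poincar\'e duality identification implicit in the statement), direct substitution into Theorems~\ref{mt:hypvssec} and~\ref{mt:jacvssec} yields $H_1|_{\ISymps}=0$ and $H_2|_{\ISymps}=0$; these are the cancellations that the constants $\frac{g}{g-1}$ and $\frac{1}{g-1}$ are precisely designed to produce.

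Next, $\ISymps$ is normal in $\Symps$ and acts trivially on the module $H^1(\Sigma;\R)$, so the twisted cocycle identity forces each $H_i$ to descend to a crossed homomorphism on the quotient $\Symps/\ISymps$. The classical fact that in dimension two every mapping class is represented by a symplectomorphism identifies $\Symps/\ISymps$ with $\Sp(2g,\Z)$. The full symplectic representation $\Symp\twoheadrightarrow\Sp(2g,\Z)$ has kernel $\ISymp$, which contains $\Sympo$, so pulling each descended $H_i$ back along this surjection yields $\tilde H_i\co\Symp\to H^1(\Sigma;\R)$: a crossed homomorphism that vanishes on $\Sympo$ and restricts to $H_i$ on $\Symps$. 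Therefore $F+\tilde H_i$ is a crossed homomorphism on $\Symp$ restricting to $\Fl$ on $\Sympo$---an extended flux map---whose restriction to $\Symps$ equals $F|_{\Symps}+H_i = G_i$, proving the corollary.

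The main obstacle is the vanishing $H_i|_{\ISymps}=0$. Although this reduces to direct substitution of the formulas from Theorems~\ref{mt:hypvssec} and~\ref{mt:jacvssec}, it requires careful attention to the Poincar\'e duality identifications and to the precise ratios of the constants, and it is exactly what pins down the coefficients $\frac{g}{g-1}$ and $\frac{1}{g-1}$ appearing in the statement.
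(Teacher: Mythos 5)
Your strategy is sound and genuinely different from the paper's. The paper proves the corollary by showing each map agrees with $\Fl$ on all of $\Symps\cap\Sympo$ --- a group strictly larger than $\Sympso$, generated by $\Sympso$ together with Hamiltonian point-pushing maps (Lemma~\ref{le:sympssympogens}) --- and then invoking Lemma~\ref{le:unext}, which extends any such crossed homomorphism from $\Symps$ to $\Symp$ using transitivity of $\Ham$ on $\Sigma$. You instead subtract a pre-existing extended flux map $F$, show the difference $H_i$ kills $\ISymps$, descend through $\Symps/\ISymps\cong\Sp(2g,\Z)\cong\Symp/\ISymp$, and pull back. The descent step is correct: since $\ISymps$ acts trivially on $H^1(\Sigma;\R)$ and $H_i$ vanishes there, the cocycle identity gives $H_i(\phi\psi)=H_i(\psi\phi)=H_i(\phi)$ for $\psi\in\ISymps$; and $\Symps/\ISymps\to\Symp/\ISymp$ is onto because $\Ham\subset\ISymp$ moves the basepoint transitively, so the pulled-back $\tilde H_i$ really does restrict to $H_i$ on $\Symps$ and vanish on $\Sympo\subset\ISymp$. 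Reliance on a known $F$ costs nothing, since Theorem~\ref{mt:hypvssec} already presupposes one. What your route buys is that it avoids the point-pushing analysis entirely; what it loses is the uniqueness of the extension, which Lemma~\ref{le:unext} gives for free (though the corollary only claims existence).

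The one computation you defer --- and explicitly flag as the main obstacle --- does not actually close for the second map with the formulas as printed. Writing $\Delta$ for $D_\Sigma^{-1}\circ\Phi\circ\johom\circ p$, Theorems~\ref{mt:hypvssec} and~\ref{mt:jacvssec} give $(\Fljac{J}-F)|_{\ISymps}=\bigl(\tfrac{g}{g-1}-1\bigr)\Delta=\tfrac{1}{g-1}\Delta$, so for $\Fljac{J}+\tfrac{1}{g-1}D_\Sigma^{-1}\circ\epsilon\circ p$ one gets $H_2|_{\ISymps}=\tfrac{2}{g-1}\Delta\neq 0$; the cancellation occurs only for $\Fljac{J}-\tfrac{1}{g-1}D_\Sigma^{-1}\circ\epsilon\circ p$. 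A point-push check confirms this: for a Hamiltonian point-pushing map $\phi$ along $a$, Lemma~\ref{le:pushmap}, Corollary~\ref{co:cjhvalue} and Theorem~\ref{mt:jacvssec} give $\Fljac{J}(\phi)([b])=\ai([a],[b])$, so the printed map evaluates to $2\,\ai([a],[b])\neq 0=\Fl(\phi)$. So either Theorem~\ref{mt:jacvssec} or the corollary carries a sign discrepancy, and a proof that cites both as printed cannot establish the second claim; asserting the cancellation without performing it conceals exactly this. The fix is a sign rather than a new idea, but you should carry out the substitution explicitly and surface the inconsistency.
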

Proposition~\ref{pr:ext} below shows that such an $\epsilon$ exists.

\subsection{An extended flux map via hyperbolic geometry}

We conclude our discussion by using a hyperbolic metric on $\Sigma$ to construct an extended flux map.
This construction works only if the genus $g$ of the surface is greater than or equal to three.
We set some more notation before describing it.
The mapping class group $\clM=\Diff^+(\Sigma)/\Diff^0(\Sigma)$ of $\Sigma$ (not preserving a basepoint) is the group of orientation-preserving diffeomorphisms of $\Sigma$ modulo equivalence by free homotopy.
The group $\Ham=\Hamn(\Sigma;\omega_\Sigma)$ of \emph{Hamiltonian symplectomorphisms} is the kernel of $\Fl$ on $\Sympo$.
The map $\Fl$ induces an isomorphism $\Sympo/\Ham\cong H^1(\Sigma;\R)$, so $H^1(\Sigma;\R)$ embeds in $\Symp/\Ham$.
Therefore there is an exact sequence:
\[1\to H^1(\Sigma;\R)\to \Symp/\Ham\to \clM \to 1.\]
The sequence splits because an extended flux map is the first-coordinate map in an isomorphism $\Symp/\Ham\cong H^1(\Sigma;\R)\rtimes \clM$.

Let $h$ be a hyperbolic metric on $\Sigma$ such that the hyperbolic area form $dV_h$ is a constant multiple of $\omega_\Sigma$
(we construct such an $h$ in Section~\ref{ss:hyperbolicprelim}).
Below, we use $h$ to construct a splitting $\hat\sigma_h\co\clM\to\Symp/\Ham$.
We show that the following map is well defined in Section~\ref{ss:hypmap}.
\begin{definition}
The \emph{hyperbolic metric extended flux map} with respect to~$h$ 
is the crossed homomorphism 
\[\Flmet{h}\co \Symp\to H^1(\Sigma;\R)\]
 induced by the splitting $\hat\sigma_h$, that is, $\Flmet{h}(\phi)=\Fl(\phi\hat\sigma_h([\phi])^{-1})$.
\end{definition}

To define this $\hat\sigma_h$, we use Dehn twists.
For a simple closed curve $a$ in $\Sigma$, the \emph{Dehn twist} $T_a\in\clM$ is the element of $\clM$ described by cutting $\Sigma$ along $a$ and regluing by a full twist along this curve to the left.
For our construction, we define a class of representatives of Dehn twists called \emph{symmetric symplectic Dehn twists}.
These are symplectomorphisms representing Dehn twists that are symmetrically centered around simple closed curves.
The definition is Section~\ref{ss:hypmap}.

\begin{maintheorem}\label{mt:symmsect}
Suppose the genus $g$ of $\Sigma$ is greater than or equal to three.
Let $\sigma\co\clM\to\Symp$ be any set-map such that for each $\phi\in\clM$, $\sigma(\phi)$ is a composition of symmetric symplectic Dehn twists around simple closed $h$--geodesics.
Then the map 
\[\hat \sigma_h\co\clM\to\Symp/\Ham,\]
given by $\hat\sigma_h(\phi)=\sigma(\phi)\cdot\Ham$, is
an injective homomorphism that is a section to the natural projection.
Further, $\hat \sigma_h$ depends only on $h$ and is independent of the choice of $\sigma$.
\end{maintheorem}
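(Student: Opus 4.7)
All four assertions of the theorem reduce to one core claim: \emph{if $\alpha,\beta\in\Symp$ are both expressible as compositions of symmetric symplectic Dehn twists about simple closed $h$--geodesics and they project to the same element of $\clM$, then $\alpha\beta^{-1}\in\Ham$}. Granting this, $\hat\sigma_h(\phi)=\sigma(\phi)\cdot\Ham$ is independent of the factorization used for $\sigma(\phi)$, so $\hat\sigma_h$ does not depend on the choice of set-map $\sigma$; for $\phi,\psi\in\clM$ the products $\sigma(\phi)\sigma(\psi)$ and $\sigma(\phi\psi)$ are compositions of symmetric symplectic Dehn twists both projecting to $\phi\psi$, hence agree modulo $\Ham$, which yields the homomorphism property. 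The section property is built into the construction of $\sigma$, and injectivity is immediate since $\sigma(\phi)\in\Ham\subset\Sympo$ forces $\phi=1$ in $\clM$.

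\textbf{Step 1: the single-geodesic case.} I first prove the core claim when $\alpha$ and $\beta$ are symmetric symplectic Dehn twists about the \emph{same} $h$--geodesic $\gamma$. The ``symmetric'' hypothesis should rigidify the twist profile up to a symplectic reparametrization of an annular tubular neighborhood of $\gamma$, and the exact two-sided cancellation of signed flux across any transversal (which is precisely what the symmetry encodes) allows a Moser-type argument to upgrade the smooth isotopy between $\alpha$ and $\beta$ to a Hamiltonian one. This simultaneously shows that $[T_\gamma^h]\in\Symp/\Ham$ is a well-defined element attached to~$\gamma$.

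\textbf{Step 2: relations in $\clM$.} For the general core claim I would fix a finite Dehn-twist presentation of $\clM$ (available for $g\geq 3$, for instance Wajnryb's) and verify that every defining relation lifts to $\Ham$ when each generator is replaced by a symmetric symplectic Dehn twist about the $h$--geodesic representative of its curve. For a relation $w(T_{a_1},\dots,T_{a_k})=1$ in $\clM$, the lift $W\in\Symp$ automatically lies in $\Sympo$, so it suffices to show $\Fl(W)=0$. By Step~1 the class of $W$ in $\Symp/\Ham$ depends only on the $h$--geodesics $\gamma_{a_i}$ and not on the particular symmetric representatives chosen, and the symmetric structure reduces the computation of $\Fl(W)$ to a local calculation in a regular neighborhood of $\bigcup_i\gamma_{a_i}$, where two-sided cancellation makes the vanishing transparent.

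\textbf{Main obstacle.} The substance of the argument is Step~2: working through each defining relation and confirming the flux vanishes. The hypothesis $g\geq 3$ enters in two ways here --- to access a clean finite Dehn-twist presentation, and because $H_1(\clM_g;\Z)=0$ precisely for $g\geq 3$, which rules out an abelian obstruction that could otherwise prevent the relation-by-relation lifting. The most delicate individual verification is likely the lantern relation, where arranging four $h$--geodesic representatives in general position while keeping the symmetric cancellation of local flux contributions intact will require the most careful local analysis.
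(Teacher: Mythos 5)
Your high-level architecture matches the paper's: reduce everything to the claim that a product of symmetric symplectic Dehn twists around $h$--geodesics lying in $\Sympo$ lies in $\Ham$ (the paper's Proposition~\ref{pr:ssdtsympo}), prove the single-geodesic comparison first (the paper's Lemma~\ref{le:twistdiff}), and then verify the claim relation-by-relation against a Dehn-twist presentation. But Step~2, which you yourself identify as the substance, has a genuine gap: the vanishing of the flux of a lifted relation is \emph{not} a consequence of the two-sided symmetry of the individual twists. The symmetry only gives the local identity $(t_a)_*\angb{b}=\angb{b}+\ai([b],[a])\angb{a}$ in the signed-area homology of a neighborhood; when you compose twists according to a braid, star, or chain relation, the residual terms are combinations such as $\angb{a_1}-2\angb{a_2}+\angb{a_3}+\angb{d_1}-\angb{d_3}$, whose vanishing requires comparing the \emph{areas of the geodesic-bounded regions} cut out by the curves. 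That comparison is exactly where hyperbolic geometry enters, via Gauss--Bonnet: geodesic triangles with equal angle sums have equal area, and geodesic-boundary subsurfaces of equal Euler characteristic have equal area. Your phrase ``two-sided cancellation makes the vanishing transparent'' cannot be right as stated, because the paper's remark following Lemma~\ref{le:chainrelation} exhibits, for $g=2$, a lift of the chain relation by symmetric symplectic Dehn twists around $h$--geodesics whose flux equals the full area of $\Sigma$ --- all of the local symmetric structure is present there and the flux still does not vanish. So the hypothesis $g\geq 3$ is not primarily about accessing a presentation or about the vanishing of $H_1(\clMg;\Z)$; it is what guarantees each relation is supported on a \emph{proper} subsurface, which is needed both for the Gauss--Bonnet bookkeeping and for the paper's detector criterion (Lemma~\ref{le:hamdetector}), a criterion that genuinely fails when the support is all of $\Sigma$ because flux is then only visible modulo $\mathrm{Area}(\Sigma,\omega_\Sigma)$.

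Two further points. First, a finite presentation such as Wajnryb's is the wrong tool here: $\sigma(\phi)$ is a word in twists around \emph{arbitrary} geodesics, not in a fixed finite generating set, so you would additionally have to verify that every conjugation relation $T_{f(a)}=fT_af^{-1}$ lifts to $\Ham$, with $f$ an arbitrary word in your generators. The paper sidesteps this by using Gervais's presentation, whose generating set is the set of all Dehn twists and whose relations (commutation, braid, one chain, one star) are exactly the configurations that can be analyzed geodesic-by-geodesic. Second, your Step~1 ``Moser-type argument'' is vaguer than necessary; the conclusion is correct, but the clean proof is that $t_a^{-1}t_a'$ is supported on a proper annular subsurface, acts trivially on the signed-area classes of a homology basis by the defining symmetry identity, and is therefore Hamiltonian by the same detector lemma. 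I would also note that the deduction of the theorem from the core claim additionally needs $\Ham$ to be normal in $\Symp$ and needs inverses of symmetric twists to be admissible letters; both are true but should be said.
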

It is always possible to construct such a $\sigma$ since $\clM$ is generated by Dehn twists (see Section~\ref{ss:Dehn}).
The proof of Theorem~\ref{mt:symmsect} appears in Section~\ref{ss:hypmap}.
The main lemma in this proof is that if a product of symmetric symplectic Dehn twists around $h$--geodesics is in $\Sympo$, then it is in $\Ham$.
We use a presentation of $\clM$ due to Gervais~\cite{ge} to characterize such products. 
The only properties of the hyperbolic metric that we use are the minimal intersection of geodesics and the Gauss--Bonnet theorem.
The relevant facts about mapping class groups are discussed in Section~\ref{ss:mcgprelim} and the preliminaries on hyperbolic metrics appear in Section~\ref{ss:hyperbolicprelim}.
The proof of Theorem~\ref{mt:symmsect} uses the map $\Flsec{s}$ but is otherwise independent of the other results in the paper.

\subsection{Layout of the paper}
Section~\ref{se:prelim} contains preliminary results, conventions, and several results that are quoted from the literature.
The definition of $\Flsec{s}$ appears in Section~\ref{ss:flsec} and the definition of $\Fljac{J}$ is in Section~\ref{ss:fljac}.
We prove Theorem~\ref{mt:hypvssec} in Section~\ref{ss:diffmetsec} and Theorem~\ref{mt:jacvssec} in Section~\ref{ss:pfjacvssec}.
The proof of Corollary~\ref{mc:eflmaps} appears in Section~\ref{ss:corollary}.
The proof of Theorem~\ref{mt:symmsect} and discussion of $\Flmet{h}$ are in Section~\ref{se:hypmap}.

\subsection{Acknowledgements}
Thanks to Benson Farb for many helpful conversations about this project and for comments on a draft of this paper.
Thanks to Paul Seidel and Mohammed Abouzaid for conversations.
Thanks to Justin Malestein and Joel Louwsma for many useful comments on earlier versions of this paper.
I would also like to thank an anonymous referee 
whose suggestions have hopefully improved this paper's readability.
I gratefully acknowledge the support of the National Science Foundation.
The work presented in this paper was done under the support of an N.S.F. graduate research fellowship, and this paper was prepared under the support of an N.S.F. postdoctoral research fellowship.

\section{Preliminaries and Conventions}\label{se:prelim}
All spaces are assumed to have smooth structures and all maps are assumed to be piecewise smooth.
Homology of spaces is singular homology, and chains are assumed to be piecewise smooth.
Cohomology of spaces is de\,Rham cohomology, but we often express cohomology classes in $H^1(\Sigma;\R)$ as elements of $\Hom(H_1(\Sigma),\R)$.
\subsection{Surfaces}\label{ss:surf}
\subsubsection{Moser stability for surfaces}
We use the following result.
\begin{theorem}[Moser stability theorem for surfaces]\label{th:moser}
Given two symplectic forms $\omega_1,\omega_2$ on $\Sigma$, there is an isotopy $\phi_t\in\Diff(\Sigma)$ with $\phi_0$ the identity and $\phi_1^*\omega_1=\omega_2$, if and only if $\omega_1$ and $\omega_2$ determine the same cohomology class, meaning $\mathrm{Area}(\Sigma,\omega_1)=\mathrm{Area}(\Sigma,\omega_2)$.
Further, if $\omega_1(q)=\omega_2(q)$ for all points $q$ on a closed submanifold $Q$ of $\Sigma$, we may assume that $\phi_t$ is the identity on $Q$.
\end{theorem}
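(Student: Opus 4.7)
The plan is to apply Moser's trick. For the necessity direction, if $\phi_t$ is such an isotopy then $\phi_1$ is homotopic to the identity, so $\phi_1^*$ acts trivially on $H^2(\Sigma;\R)$; hence $[\omega_2]=[\phi_1^*\omega_1]=[\omega_1]$. Since $H^2(\Sigma;\R)$ is one-dimensional for a connected closed oriented surface, this cohomological equality is equivalent to equality of total areas.

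For sufficiency, I would form the linear interpolation $\omega_t = (1-t)\omega_2 + t\omega_1$. In dimension two, both $\omega_1$ and $\omega_2$ are positive multiples of the orientation form, so $\omega_t$ is a smooth family of symplectic forms with $[\omega_t]$ constant and $\frac{d}{dt}\omega_t = \omega_1-\omega_2$ exact. Choose a primitive $\alpha$ with $d\alpha = \omega_1-\omega_2$, and use nondegeneracy of $\omega_t$ to define a unique time-dependent vector field $X_t$ via $\iota_{X_t}\omega_t = -\alpha$. The flow $\phi_t$ of $X_t$ satisfies $\frac{d}{dt}(\phi_t^*\omega_t) = \phi_t^*\bigl(\mathcal{L}_{X_t}\omega_t + \frac{d}{dt}\omega_t\bigr) = \phi_t^*(-d\alpha + d\alpha) = 0$ by Cartan's formula, so $\phi_1^*\omega_1 = \phi_0^*\omega_2 = \omega_2$.

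For the relative statement, the flow $\phi_t$ fixes $Q$ pointwise if and only if $X_t$ vanishes along $Q$, which by nondegeneracy of $\omega_t$ is equivalent to arranging $\alpha$ to vanish pointwise (not merely pull back to zero) along $Q$. Since $\omega_1-\omega_2$ vanishes pointwise on $Q$ by hypothesis, my plan is to choose a tubular neighborhood $U$ of $Q$ and apply the radial homotopy operator $P$ associated to the deformation retraction $U\to Q$; since $d\omega=0$ and $\omega=\omega_1-\omega_2$ pulls back to zero on $Q$, the identity $dP(\omega)+P(d\omega)=\omega-r^*i^*\omega$ produces a local primitive $\alpha_0=P(\omega)$ on $U$ with $\alpha_0$ vanishing pointwise on $Q$. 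I would then cut off $\alpha_0$ by a bump function equal to $1$ near $Q$ and supported in $U$ to obtain a global 1-form $\tilde\alpha$, and correct the remaining error $\omega_1-\omega_2 - d\tilde\alpha$ (which is closed, exact, and zero on a neighborhood of $Q$) by an exact form $d\gamma$ with $\gamma$ vanishing on a smaller neighborhood of $Q$. This last gluing step is the main obstacle: the absolute Moser argument is textbook, but producing a global primitive that vanishes pointwise on $Q$ requires careful bookkeeping with tubular neighborhoods and partitions of unity. Since $\Sigma$ is only a surface and $Q$ is a closed submanifold of low codimension, the construction remains entirely explicit and tractable.
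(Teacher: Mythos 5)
Your proof of the main (absolute) statement is correct and is the standard Moser deformation argument; the paper itself gives no proof, deferring to Exercise~3.21(i) and Theorem~3.17 of McDuff--Salamon, so the only comparison to make is with that reference, and your argument is exactly the intended one. The necessity direction and the convexity of the path $\omega_t$ are handled correctly (note that equal, nonzero total areas already force $\omega_1$ and $\omega_2$ to induce the same orientation, which is what justifies calling both of them positive multiples of the orientation form, and compactness of $\Sigma$ guarantees the flow of $X_t$ exists up to time $1$).

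The gap is in the relative addendum, and you have located it accurately but cannot resolve it as stated: the final gluing step fails in general, and in fact the ``Further'' clause is false as printed when $Q$ contains a separating circle. Concretely, if $\phi_t$ fixes $Q$ pointwise then each component $S$ of $\Sigma\setminus Q$ satisfies $\phi_1(S)=S$, so $\int_S\omega_1=\int_S\phi_1^*\omega_1=\int_S\omega_2$ is a necessary condition that is not implied by equality of total areas. This is precisely the obstruction your construction hits: with $\tilde\alpha$ vanishing pointwise on $Q$, the error $\eta=\omega_1-\omega_2-d\tilde\alpha$ satisfies $\int_S\eta=\int_S(\omega_1-\omega_2)$ for any subsurface $S$ with $\partial S\subset Q$, and a primitive $\gamma$ of $\eta$ vanishing near $Q$ exists only if all such integrals vanish, since the class of $\gamma_0|_V$ in $H^1(V)\cong H^1(Q)$ can be cancelled by a globally closed $1$--form only when its periods lie in the image of $H^1(\Sigma)\to H^1(Q)$, which annihilates cycles in $Q$ that bound in $\Sigma$. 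So either add the hypothesis that $\omega_1$ and $\omega_2$ assign equal area to every component of $\Sigma\setminus Q$ --- under which your gluing goes through --- or note that the paper only ever invokes the relative version with $Q$ a single point (in the proof of Lemma~\ref{le:hamdetector}), where no gluing is needed at all: take any primitive $\alpha$ of $\omega_1-\omega_2$ and replace it by $\alpha-df$ for a function $f$ whose differential at $q$ equals $\alpha_q$.
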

This is essentially Exercise~3.21(i) of McDuff--Salamon~\cite{mcdsal}, and it follows easily from the general Moser stability theorem (Theorem~3.17 of~\cite{mcdsal}).

\begin{remark}\label{re:surfflux}
We refer the reader to Section~10.2 of McDuff--Salamon~\cite{mcdsal} for discussion of Flux homomorphisms.
As they explain, $\Fl$ is a well-defined homomorphism from the universal cover of $\Sympo$ to $H^1(\Sigma;\R)$.
For completeness, we mention that $\Sympo$ is simply connected.
This follows from Theorem~\ref{th:moser} and from the theorem of Earle--Eells~\cite{ee} that the connected component of the identity in $\Diff(\Sigma)$ is contractible.
\end{remark}

\subsubsection{Intersection numbers}
We use the \emph{algebraic intersection form}, which is a bilinear alternating function $\ai\co \bigwedge^2 H_1(\Sigma)\to \Z$.
As usual, to compute the algebraic intersection number of two classes in $H_1(\Sigma)$, find representative  cycles that meet transversely and sum the signs of their finitely many intersection points.
We refer to the extension of $\ai$ to $\bigwedge^2 H_1(\Sigma;\R)$ by the same symbol.

We also use geometric intersection numbers.
For $a$ and $b$ closed curves or homotopy classes of closed curves, we denote the \emph{geometric intersection number} of $a$ and $b$ by $|a\cap b|$.
This is the minimum number intersection points between any pair of curves representing the homotopy classes of the curves. 
\subsubsection{A fixed set of homology basis representatives}\label{sss:basisreps}
At this point we fix a set of representatives for a homology basis in a specific configuration.

Let $x_1,\ldots,x_g,y_1,\ldots,y_g$ be a set of simple closed curves such that:
\begin{itemize}
\item  $[x_1],\ldots,[x_g],[y_1],\ldots,[y_g]$ is a basis for $H_1(\Sigma;\Z)$,
\item  $\ai([x_i],[y_i])=1$ for each $i$ and all other algebraic intersection numbers among these basis elements are zero,
\item $|x_i \cap y_i|=1$ for each $i$ and all other geometric intersection numbers among these basis representatives are zero, and
\item the basepoint $*$ does not lie on any $x_i$ or $y_i$.
\end{itemize} 

Let $x'_g$ be a simple closed curve in $\Sigma\bs\{ *\}$ that is homotopic to $x_g$ in $\Sigma$ but not in $\Sigma\bs\{ *\}$, such that $|x'_g\cap y_g|=1$ and such that $x'_g$ does not intersect any other basis representative. 
See Figure~\ref{fi:basisdiagram} for reference.

\begin{figure}[ht!]
\labellist
\small\hair 2pt
\pinlabel $x_1$ [l] at 68 107
\pinlabel $y_1$ [t] at 57 49
\pinlabel $x_{2}$ [l] at 162 111
\pinlabel $y_{2}$ [t] at 149 49
\pinlabel $\cdots$ at 208 72
\pinlabel $x_g$ [l] at 282 106
\pinlabel $x_g'$ [l] at 282 31
\pinlabel $y_g$ [t] at 305 55

\pinlabel $*$ [t] at 344 69
\endlabellist
\centering
\includegraphics[scale=0.70]{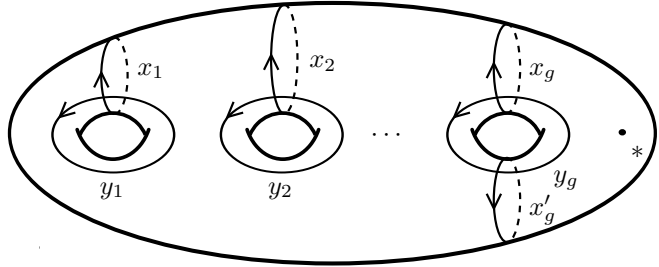}
\caption{A basis and an additional curve.}
\label{fi:basisdiagram}
\end{figure}

\subsection{Mapping class groups}\label{ss:mcgprelim}
In our arguments we consider both basepoint-preserving and non-basepoint-preserving mapping class groups of closed surfaces.
The book by Farb and Margalit~\cite{fm} is an excellent reference.
As mentioned in the introduction, the mapping class group $\clM$ of $\Sigma$ is the group of orientation-preserving homeomorphisms of $\Sigma$, modulo equivalence by free homotopy.
The basepoint-preserving mapping class group $\ptM$ of $\Sigma$ is the group of orientation-preserving homeomorphisms of $\Sigma$ preserving the basepoint, modulo equivalence by homotopy relative to the basepoint.

\subsubsection{Dehn twists}\label{ss:Dehn}
For a simple closed curve $a$ in $\Sigma$, we can find a neighborhood $A$ of $a$ that is a compact annulus.
Visualize $\Sigma$ as the boundary of a handlebody in $\R^3$, such that $A$ embeds as a long cylinder.
We can describe a homeomorphism of $\Sigma$ by cutting along $a$, rotating $A$ a full twist to the left on one side, and regluing.
The mapping class this defines in $\clM$ is the \emph{Dehn twist} around $a$ and is denoted $T_a$. 
This is illustrated in Figure~\ref{fi:Dehn}.
As long as the basepoint $*$ does not lie on $a$, we can twist on a neighborhood of $a$ not containing $*$ to get Dehn twists in $\ptM$ as well.
A more formal definition appears in Farb--Margalit~\cite{fm}.

\begin{figure}
\labellist
\small\hair 1pt
\pinlabel $a$ [b] at 58 42
\pinlabel $a$ [b] at 207 42
\pinlabel $\to$ at 135 63
\endlabellist
\centering
\includegraphics[scale=0.65]{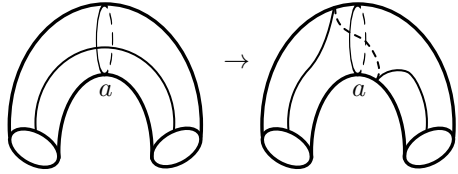}

\caption{A Dehn twist around a simple closed curve $a$, and a reference arc crossing $a$.}\label{fi:Dehn}
\end{figure}

We use the convention that positive Dehn twists turn to the left.
Since we have a fixed orientation for $\Sigma$, turning ``to the left" is well defined.
Two Dehn twists around homotopic curves represent the same element of $\clM$, and Dehn twists around curves that are homotopic in $\Sigma\backslash *$ represent the same element of $\ptM$.
Therefore we sometimes refer to the Dehn twist around a curve that is only defined up to homotopy.

We use the following theorem of Dehn.
Farb--Margalit~\cite{fm} is a reference.
\begin{theorem}[Dehn]\label{th:Dehn}
The groups $\clM$ and $\ptM$ are generated by finitely many Dehn twists around non-separating simple closed curves.
\end{theorem}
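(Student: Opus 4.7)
The plan is to prove the theorem by induction on the genus $g$, with the core engine being a transitivity statement: the subgroup of $\clM$ generated by Dehn twists around non-separating simple closed curves acts transitively on the set of isotopy classes of non-separating simple closed curves.

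To establish transitivity, I would first check that if two non-separating simple closed curves $a$ and $b$ meet transversely in exactly one point, then a short word in $T_a$ and $T_b$ carries the isotopy class of $a$ to that of $b$; this is a direct computation in a regular neighborhood of $a \cup b$, which is a one-holed torus. Next, I would show that any two non-separating simple closed curves can be joined by a chain of non-separating simple closed curves in which consecutive pairs intersect exactly once. This is a connectivity statement for the ``chain graph,'' provable by a surgery argument that removes intersection points of a generic pair of non-separating curves in pairs, each step producing a new non-separating curve intersecting the old one once transversely.

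Given transitivity, the induction is clean. Fix a non-separating simple closed curve $c$ and let $\phi \in \clM$. By transitivity there is a product $\psi$ of Dehn twists with $(\psi\phi)[c] = [c]$; choosing a representative setwise fixing $c$, we see that $\psi\phi$ induces a mapping class on the cut surface $\Sigma'$ of genus $g-1$ with two boundary components. By a parallel inductive argument for surfaces with boundary (allowing Dehn twists parallel to the boundary), this induced class is a product of Dehn twists around non-separating simple closed curves in $\Sigma'$, each of which lifts to a Dehn twist of $\Sigma$ around a non-separating simple closed curve. The base case $g=1$ reduces to the classical fact that $\clM \cong \mathrm{SL}(2,\Z)$ is generated by the two elementary matrices corresponding to the meridian and longitude Dehn twists.

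For the basepoint-preserving group $\ptM$, I would apply the Birman exact sequence
\[1 \to \pi_1(\Sigma, *) \to \ptM \to \clM \to 1,\]
together with the fact that the point-pushing map along a simple loop based at $*$ is equal to $T_a T_b^{-1}$, where $a$ and $b$ are the two boundary components of a tubular neighborhood of the loop; both of these curves are non-separating. Combined with Dehn-twist lifts of generators of $\clM$, this gives a finite generating set of $\ptM$ consisting of Dehn twists around non-separating simple closed curves. The main obstacle is the transitivity/connectivity input; once it is in place, the induction is routine, and finiteness of the generating set is automatic since the induction terminates at the torus base case with only finitely many twists introduced at each stage.
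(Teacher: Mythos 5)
The paper does not prove this statement: it is Dehn's classical theorem, quoted with Farb--Margalit as the reference, so there is no in-paper argument to compare against. Your outline is the standard textbook proof (transitivity of the twist subgroup on non-separating curves via connectivity of the intersection-one graph, cutting along a curve and inducting on genus, and the Birman exact sequence together with the identity expressing a point-push as $T_aT_b^{-1}$ for the pointed case), and that skeleton is sound.

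There is, however, a genuine gap in how you obtain \emph{finitely many} twists. In your inductive step the product $\psi$ of twists with $(\psi\phi)[c]=[c]$ is built from a chain of curves joining $\phi^{-1}(c)$ (or $\phi(c)$) to $c$; those curves depend on $\phi$, so the twists introduced at this stage range over infinitely many isotopy classes. What your induction actually yields is that $\clM$ is generated by the set of \emph{all} Dehn twists about non-separating simple closed curves, which is infinite; the assertion that ``finiteness is automatic'' does not follow from the argument as structured. To repair it one needs an additional ingredient: for instance, the standard lemma that if a group acts on a connected graph transitively on vertices and on ordered pairs of adjacent vertices, then it is generated by the stabilizer of one vertex together with a single element carrying that vertex to an adjacent one (applied to the graph of non-separating curves with edges given by geometric intersection number one, where edge-transitivity is the change-of-coordinates principle and the extra element may be taken to be $T_aT_b$, since $T_aT_b(a)=b$ when $|a\cap b|=1$); alternatively, Lickorish's explicit computation that every non-separating twist is a product of the $3g-1$ standard twists. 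A second, smaller omission: after arranging $(\psi\phi)[c]=[c]$, a representative fixing $c$ setwise may reverse the orientation of $c$ or interchange its two sides, so the induced map on the cut surface may permute the two boundary components and need not lie in $\clMn(\Sigma',\partial\Sigma')$; one must either work with the mapping class group permitting boundary permutations or first compose with a fixed product of non-separating twists realizing the swap (and the low-genus base cases of the boundary version, where the connectivity argument is unavailable, must be checked separately). Both points are handled in the Farb--Margalit treatment you are implicitly following, but as written your argument establishes generation by non-separating twists, not \emph{finite} generation by them.
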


The following is a standard fact.
For $a$ a simple closed curve and $T_a$ the twist around $a$ (in $\clM$ or in $\ptM$), we have 
\begin{equation}\label{eq:twistact}(T_a)_*[b]=[b]+\ai([b],[a])[a]\end{equation}
for any $[b]\in H_1(\Sigma)$.
To verify this, construct a set of basis representatives for $H_1(\Sigma)$ in which one of the representatives intersects $a$ once, and the other representatives are disjoint from $a$.
Then it is enough verify the equation when $b$ intersects $a$ once, which is easy.

\subsubsection{Presenting the mapping class group}
We use a presentation of the mapping class group due to Gervais~\cite{ge}.
Before stating the presentation, we explain the relations it uses.

The \emph{braid relation} states that if $a$ and $b$ are simple closed curves with $|a\cap b|=1$ and $c$ is in the homotopy class of $T_a(b)$, then 
\[T_c=T_aT_bT_a^{-1}.\]

The \emph{star relation} involves simple closed curves $a_1, a_2, a_3$ and $b$ such that the $a_i$ are disjoint and each $a_i$ intersects $b$ only once and positively.
A regular neighborhood of the union of these curves is a subsurface of genus one with three boundary components.
Let $d_1, d_2$ and $d_3$ be the boundary curves of this subsurface.
This is illustrated in Figure~\ref{fi:starrelation}.
Then the star relation states:
\[(T_{a_1}T_{a_2}T_{a_3}T_b)^3=T_{d_1}T_{d_2}T_{d_3}.\]
This is called a non-separating star relation if each $d_i$ is a non-separating curve.

The \emph{chain relation} is the star relation when one of $d_1,d_2,$ or $d_3$ bounds a disk; it is called a non-separating chain relation if the other two are non-separating.

\begin{figure}[ht!]
\labellist
\small\hair 1pt
\pinlabel $a_1$ [b] at  138 124
\pinlabel $a_2$ [bl] at  68 118
\pinlabel $a_3$ [r] at   88  50
\pinlabel $b$ [b] at    100 115
\pinlabel $d_1$ [tr] at  44  56
\pinlabel $d_2$ [tl] at 163  64
\pinlabel $d_3$ [b] at  101 157
\endlabellist
\centering
\includegraphics[scale=0.65]{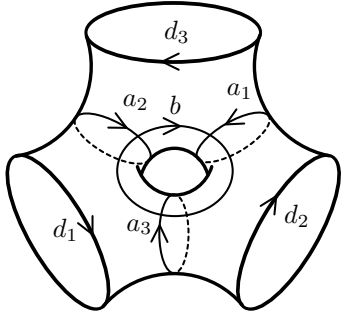}
\caption{The curves of a star relation.} 
\label{fi:starrelationnoref}
\end{figure}

The following is Theorem~B from Gervais~\cite{ge}.
\begin{theorem}[Gervais's Theorem]\label{th:presentation}
If $g\geq 3$, the group $\clM$ has a presentation with the set of Dehn twists around all free homotopy classes of essential simple closed curves as its generators and with the following relations:
\begin{enumerate}
\item\label{it:comm}  that $T_a$ and $T_b$ commute for all pairs of disjoint simple closed curves $a$ and $b$,
\item\label{it:braid} the braid relations $T_c=T_aT_bT_a^{-1}$ for all pairs of simple closed curves $a$ and $b$ that intersect once, where $c$ is in the free homotopy class of $T_a(b)$, 
\item\label{it:chain} a single non-separating chain relation, and
\item\label{it:star} a single non-separating star relation.
\end{enumerate}
\end{theorem}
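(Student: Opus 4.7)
The plan is to identify the group $G$ defined by the presentation in the statement with $\clM$. There is an obvious candidate map $\pi\co G\to\clM$ sending each generator $T_a$ to the Dehn twist around $a$. Each of the four relation families holds in $\clM$: commutativity of twists with disjoint support is immediate; the braid relation~(\ref{it:braid}) is classical; and both the chain and star relations~(\ref{it:chain}),~(\ref{it:star}) can be verified directly by factoring the boundary Dehn twists of an explicit compact subsurface with boundary (a genus-one surface with three holes, and its degenerations) into a product of interior Dehn twists, using the Alexander method or an explicit Lefschetz-fibration model. Theorem~\ref{th:Dehn} then shows that $\pi$ is surjective, and the whole content of the theorem is to prove injectivity.

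To prove injectivity I would reduce to a known finite presentation of $\clM$, such as Wajnryb's presentation indexed by a chain of curves $c_0,\ldots,c_{2g}$ together with one extra generator. The first main step is to derive in $G$ the \emph{change-of-coordinates identity} $w\,T_a\,w^{-1}=T_{\pi(w)(a)}$ for every word $w$ in the generators and every simple closed curve $a$ with $\pi(w)(a)$ defined, by iterating the braid relation along sequences of simple closed curves each pair intersecting in one point. The topological input is the transitivity of the $\clM$-action on isotopy classes of non-separating simple closed curves (and more generally on configurations of curves of a given topological type); the hypothesis $g\geq 3$ ensures there is enough ambient room in $\Sigma$ for the required connect-by-twists arguments.

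The second main step uses this change-of-coordinates to promote the \emph{single} given relations to whole families. Conjugating the non-separating chain relation~(\ref{it:chain}) by appropriate words from Step~1 yields a chain relation for every non-separating chain; similarly for every non-separating star relation~(\ref{it:star}). Separating chain and star relations are obtained from non-separating ones by embedding the relevant subsurface inside a larger one and using a word conjugation that realizes a separating curve as the image of a non-separating one. Finally, one derives the lantern relation from a star relation: inside a genus-one subsurface $N$ with three boundary components, the star relation writes the product of boundary twists as a cubed product of interior twists; one then embeds $N$ as a ``handle completion'' of a four-holed sphere $N'$ and uses a non-separating chain relation to cancel the interior handle, turning the star relator into a lantern relator in $N'$.

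With braid, commutativity, chain, star, and lantern relations now all available in $G$, any relator of Wajnryb's finite presentation can be rewritten as the identity in $G$. Since $\pi$ is a bijection on generators and factors through this finite presentation, we conclude $G\cong\clM$. The main obstacle is the third step of paragraph two, namely extracting the lantern relation from the \emph{single} star relation: rewriting a relator that lives in a genus-one subsurface with three boundary components as a relator that lives in a genus-zero subsurface with four boundary components requires a careful algebraic mimicry of a ``handle-cancellation'' move, performed entirely inside the abstract group $G$. This is the delicate combinatorial heart of the argument; Gervais executes it explicitly in~\cite{ge}, and once it is in place the remaining reductions to Wajnryb's presentation are routine applications of change-of-coordinates.
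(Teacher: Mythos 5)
This theorem is quoted from the literature (it is Theorem~B of Gervais~\cite{ge}) and the paper gives no proof of it, so there is no internal argument to compare against; your proposal has to be judged as a standalone proof. As a roadmap it matches the strategy actually used in~\cite{ge} (verify the relations hold in $\clM$, derive change-of-coordinates inside the abstract group, promote the single chain and star relations to families, extract the lantern relation, and reduce to Wajnryb's finite presentation), so the architecture is sound.

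But as written it is not a proof: the two hardest steps are gestured at and then explicitly deferred to Gervais, which is circular if the goal is to prove the theorem rather than cite it. Concretely, the change-of-coordinates identity $wT_aw^{-1}=T_{\pi(w)(a)}$ is not available for free even for single-letter words: the presentation only supplies the conjugation relation when the two curves are disjoint (relation~(\ref{it:comm})) or intersect once (relation~(\ref{it:braid})), so for a generator $T_b$ with $|a\cap b|\geq 2$ you must first rewrite $T_b$, inside $G$, as a word in twists each of which meets $a$ at most once --- this requires a connectivity argument in a suitable complex of curves and is itself a nontrivial induction, not a routine iteration of the braid relation. Second, the derivation of the lantern relation from the single star relation (your ``handle-cancellation'' inside the abstract group) is, as you say yourself, the combinatorial heart of the argument, and no actual mechanism for it is given. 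Until those two steps are carried out, the reduction to Wajnryb's presentation cannot begin, so the proposal should be regarded as a correct outline with its essential content missing rather than a proof.
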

This presentation has infinitely many generators and infinitely many relations, even though $\ptM$ is a finitely presentable group.
However, it is appealing because it is essentially basis independent.
Gervais also has a presentation for the mapping class group of a surface of genus two, but we will not use it.

\begin{remark}
If $a$ and $b$ are simple closed curves that intersect once, then
Gervais's braid relation easily implies the relation $T_aT_bT_a=T_bT_aT_b$.
This is what most authors mean by a ``braid relation''.
\end{remark}

\subsubsection{The Birman exact sequence}
For $\phi$ a diffeomorphism fixing $*$, $\phi$ represents classes both in $\ptM$ and in $\clM$.
This defines a natural map $\ptM\to\clM$.

For an embedding $\gamma\co S^1\to \Sigma$ based at $*$, we can find an isotopy $\phi_t\co \Sigma\times[0,1]\to\Sigma$ such that $\phi_t(*)=\gamma(t)$.
The time-one map $\phi_1$ is a \emph{point-pushing map} of $\gamma$; it clearly represents the trivial element of $\clM$, but represents a well-defined, nontrivial element of $\ptM$ if $\gamma$ is not homotopically trivial.

The following theorem of Birman 
shows that point-pushing maps generate the kernel of $\ptM\to\clM$.
Farb--Margalit~\cite{fm} is a reference.
\begin{theorem}[The Birman exact sequence]
\label{th:birmanexact}
There is an exact sequence
\[1\to \pi_1(\Sigma,*)\to \ptM \to \clM\to 1.\]
The inclusion map is given on the classes of simple closed loops (which generate $\pi_1(\Sigma,*)$) by taking point-pushing maps, and the projection map is the natural map.
\end{theorem}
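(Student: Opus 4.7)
The plan is to obtain the sequence from the long exact sequence of homotopy groups associated to a natural fiber bundle, and then to identify the boundary map explicitly in terms of isotopies. Consider the evaluation map
\[
\mathrm{ev}_*\co \Diff^+(\Sigma)\longrightarrow \Sigma,\qquad \phi\longmapsto \phi(*).
\]
The first main step is to verify that $\mathrm{ev}_*$ is a locally trivial fiber bundle with fiber $\Diff^+(\Sigma,*)$. This follows from the smooth transitive action of $\Diff^+(\Sigma)$ on $\Sigma$ together with the existence of local smooth slices (produced, for instance, by choosing a tubular neighborhood of a point and supported diffeomorphisms moving $*$ within it).

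Once the bundle structure is in place, the long exact sequence in homotopy reads
\[
\pi_1(\Diff^+(\Sigma))\to \pi_1(\Sigma,*)\to \pi_0(\Diff^+(\Sigma,*))\to \pi_0(\Diff^+(\Sigma))\to \pi_0(\Sigma).
\]
Now I would invoke two ingredients. First, $\Sigma$ is connected, so $\pi_0(\Sigma)$ is trivial, which gives surjectivity of $\ptM\to\clM$. Second, by the theorem of Earle--Eells quoted in Remark~\ref{re:surfflux}, the identity component of $\Diff^+(\Sigma)$ is contractible for $g\geq 2$; in particular $\pi_1(\Diff^+(\Sigma))=0$, so the map $\pi_1(\Sigma,*)\to\pi_0(\Diff^+(\Sigma,*))=\ptM$ is injective. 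Combining these yields exactness of
\[
1\to \pi_1(\Sigma,*)\to \ptM\to\clM\to 1.
\]

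The remaining task is to identify the connecting homomorphism $\partial\co \pi_1(\Sigma,*)\to\ptM$ with the point-pushing construction. Given a smooth loop $\gamma\co S^1\to\Sigma$ based at $*$, apply the homotopy lifting property of $\mathrm{ev}_*$ to a nullhomotopy of $\gamma$ in the total space to produce an isotopy $\phi_t\in\Diff^+(\Sigma)$ with $\phi_0=\mathrm{id}$ and $\phi_t(*)=\gamma(t)$; then $\phi_1\in\Diff^+(\Sigma,*)$ and its class in $\ptM$ is by definition $\partial[\gamma]$. This $\phi_1$ is exactly a point-pushing map of $\gamma$. Since simple closed loops generate $\pi_1(\Sigma,*)$, this determines the inclusion on a generating set, as claimed.

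The main obstacles are of a technical rather than conceptual nature: establishing that $\mathrm{ev}_*$ really is a smooth fiber bundle (requiring slice constructions in the $C^\infty$ topology on $\Diff^+(\Sigma)$) and invoking the deep Earle--Eells result on contractibility of the identity component of $\Diff^+(\Sigma)$. Once these two inputs are accepted, the exactness and the identification of the inclusion are formal consequences of the homotopy long exact sequence.
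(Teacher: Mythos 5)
Your argument is correct, and it is the standard proof of the Birman exact sequence for closed surfaces of genus at least two. The paper itself does not prove this statement; it quotes it, citing Farb--Margalit, and the proof given there is exactly the route you take: the evaluation map $\Diff^+(\Sigma)\to\Sigma$ is a locally trivial fiber bundle with fiber $\Diff^+(\Sigma,*)$, the long exact sequence of homotopy groups is applied, Earle--Eells kills $\pi_1(\Diff^+(\Sigma))$ to give injectivity, connectivity of $\Sigma$ gives surjectivity, and the connecting map is computed by lifting a loop $\gamma$ to an isotopy $\phi_t$ with $\phi_0=\mathrm{id}$ and $\phi_t(*)=\gamma(t)$, whose time-one map is by definition a point-pushing map. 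Two small points you gloss over, both standard: identifying $\ptM$ and $\clM$ with $\pi_0$ of the corresponding diffeomorphism groups requires the classical fact that homotopic diffeomorphisms of a surface are isotopic (Baer--Epstein), since the paper defines its mapping class groups via homotopy rather than isotopy; and the description of $\partial$ should just be lifting the loop $\gamma$ itself (a path in the base) via the homotopy lifting property, rather than lifting a nullhomotopy --- your formula $\phi_t(*)=\gamma(t)$ is the right one in any case. Neither affects the validity of the argument.
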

A point-pushing map can be easily expressed as a product of Dehn twists.
In particular, $T_{x_g}T_{x'_g}^{-1}$ is a point-pushing map, along a curve parallel to $x_g$.
By Equation~(\ref{eq:twistact}),  $T_{x_g}T_{x'_g}^{-1}$ is in $\ptI$.
Then all point-pushing maps are in $\ptI$, since they are all conjugate in $\ptM$.

\subsection{The Jacobian}\label{ss:jacobianprelim}
Of course, the study of Jacobian varieties of surfaces has a long and rich history in algebraic geometry.
However, we only consider the Jacobian from topological and real-geometric perspectives.
In keeping with these perspectives, we directly prove some preliminary results that can be quoted from sources in algebraic geometry.

\subsubsection{Conventions}
\label{sss:jacconventions}
As in the introduction, we define the Jacobian $X$ of $\Sigma$ as 
\[X=H_1(\Sigma;\R)/H_1(\Sigma;\Z).\]
Note that this is a $2g$--dimensional torus.

We use the basis $[x_1],\ldots,[x_g],[y_1],\ldots,[y_g]$ for $H_1(\Sigma)$ from Section~\ref{sss:basisreps}.
For $i=1,\ldots, g$, we have coordinate functions for $[x_i]$ and $[y_i]$ on $H_1(\Sigma;\R)$.
By differentiating these functions we get $1$--forms on $H_1(\Sigma;\R)$; these descend to $1$--forms on $X$ since they are $H_1(\Sigma)$--invariant.
Let $\tilde \alpha_i$ and $\tilde\beta_i$ denote the forms on $X$ built this way from $[x_i]$ and $[y_i]$ respectively.

Pick a set of $1$--forms $\alpha_1,\ldots,\alpha_g,\beta_1,\ldots,\beta_g$ that are basis representatives for $H^1(\Sigma;\R)$, satisfying the following conditions:
\begin{itemize}
\item $\{[\alpha_i],[\beta_i]\}_i$ is evaluation-dual to $\{[x_i],[y_i]\}_i$, meaning $\int_{x_i}\alpha_i=\int_{y_i}\beta_i=1$ and the other evaluations are $0$;
\item for each $i$, $\alpha_i\wedge\beta_i$ is a non-negative multiple of $\omega_\Sigma$;
\item for each point $p$ in $\Sigma$, there is some $\alpha_i\wedge\beta_i$ that does not vanish at $p$.
\end{itemize}

\begin{lemma}
It is possible to construct $\alpha_i$, $\beta_i$ as above.
\end{lemma}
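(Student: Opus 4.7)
I would proceed in two stages: first construct $\alpha_i, \beta_i$ satisfying the cohomological condition (1), then modify within the cohomology classes to achieve the pointwise conditions (2) and (3).

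For the first stage, I would use a standard Poincar\'e-dual bump construction. For each $i$, choose a closed tubular neighborhood $A_i \cong y_i \times [-1,1]$ of $y_i$ disjoint from all other basis curves but crossing $x_i$ transversely at the single point $p_i = x_i \cap y_i$, and define $\alpha_i = \phi_i(t)\,dt$ on $A_i$ (extended by zero) for a nonnegative bump function $\phi_i$ with $\int \phi_i(t)\,dt = 1$. Build $\beta_i$ analogously from a tubular neighborhood $B_i$ of $x_i$. A direct computation verifies the evaluation-duality (1); by choosing the normal orientations of the tubular neighborhoods correctly (possibly negating one of the bump functions), we can arrange that $\alpha_i \wedge \beta_i$ is a nonnegative multiple of $\omega_\Sigma$ throughout its support, giving (2) for these preliminary forms.

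The main obstacle is (3): with this construction each $\alpha_i \wedge \beta_i$ is supported in a small bidisk near $p_i$, so the collective supports cover only $g$ isolated regions, not all of $\Sigma$. To remedy this, I would modify each $\alpha_i$ and $\beta_i$ within its cohomology class by adding exact $1$-forms, chosen so that the resulting wedge product is nonzero over a larger region. The cohomology-level identity $\int_\Sigma \alpha_i \wedge \beta_i = 1$ (from the Riemann bilinear relations applied to (1)) is invariant under such modifications, so the wedge product cannot collapse to zero; the difficulty is to enlarge its support while preserving pointwise nonnegativity (2).

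The hard part will be balancing (2) and (3) simultaneously, since a generic exact perturbation can cause $\alpha_i \wedge \beta_i$ to change sign locally. My approach would be to cover $\Sigma$ by a finite collection of Darboux charts $\{(U_k,(u_k,v_k))\}$ with $\omega_\Sigma = du_k \wedge dv_k$, assign to each $U_k$ a designated index $i_k \in \{1,\ldots,g\}$, and on each $U_k$ add to the designated pair $(\alpha_{i_k}, \beta_{i_k})$ a small bump-type exact-form contribution in the coordinate directions that increases the value of $\alpha_{i_k} \wedge \beta_{i_k}$ there while keeping it nonnegative elsewhere. Since condition (3) only requires \emph{some} pair to be nonzero at each point, the freedom to use different $i_k$ in different charts provides enough flexibility to cover $\Sigma$, and one can take the corrections small enough not to destroy nonnegativity of the other pairs.
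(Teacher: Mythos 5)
Your first stage is fine, but the second stage has a genuine gap, and it is exactly at the point you flag as ``the hard part.'' Suppose $U_k$ is a Darboux disk disjoint from the tubular neighborhoods $A_{i_k}\cup B_{i_k}$ (most of your charts must be of this kind, since those neighborhoods do not cover $\Sigma$). Any exact $1$--form supported in the disk $U_k$ has a compactly supported primitive there, so your corrections have the form $df$, $dg$ with $f,g$ compactly supported in $U_k$. On $U_k$ the original $\alpha_{i_k}$ and $\beta_{i_k}$ vanish identically, so the corrected wedge is exactly $df\wedge dg$ on $U_k$, and by Stokes's theorem $\int_{U_k} df\wedge dg=\int_{U_k} d(f\,dg)=0$. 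Hence $df\wedge dg$ is either identically zero or changes sign on $U_k$: you cannot make it a nonnegative multiple of $\omega_\Sigma$ that is positive somewhere in $U_k$. The negative mass can only be cancelled against regions where the uncorrected $\alpha_{i_k}\wedge\beta_{i_k}$ is already positive, i.e.\ the small bidisk near $p_{i_k}$, which forces a global rather than chart-by-chart construction; as described, your local perturbation scheme cannot simultaneously achieve conditions (2) and (3).

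The paper sidesteps this entirely with a Hodge-theoretic trick: choose each closed $\alpha_i$ representing the class dual to $[x_i]$ so that it has only finitely many zeros, with the zero sets of the different $\alpha_i$ pairwise disjoint, and then set $\beta_i=*\alpha_i$ for the Hodge star of an auxiliary metric (compatible with the orientation). Then $\alpha_i\wedge\beta_i=|\alpha_i|^2\,dV$ is automatically a nonnegative multiple of $\omega_\Sigma$, vanishing only at the finitely many zeros of $\alpha_i$, and since those zero sets are disjoint, some $\alpha_i\wedge\beta_i$ is nonzero at every point. If you want to repair your argument rather than adopt that one, the cleanest fix is to replace your stage 2 by this Hodge-star step (checking that $*\alpha_i$ is closed and has the correct periods, e.g.\ by taking harmonic representatives and adjusting by the period matrix), rather than by localized exact perturbations.
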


\begin{proof}
Let $\alpha_i$ be a form representing the class in $H^1(\Sigma;\R)$ that is evaluation-dual to $[x_i]$ with respect to the given $H_1(\Sigma)$--basis, such that $\alpha_i$ has only finitely many vanishing points.
Further, we demand that these vanishing points be distinct for the different $\alpha_i$.
Temporarily pick a metric, and define $\beta_i$ to be $*(\alpha_i)$, where $*$ is the Hodge operator with respect to this metric.
Then $\{\alpha_i,\beta_i\}_i$ satisfy the conditions.
\end{proof}

\subsubsection{Properties of the Jacobian}

\begin{lemma}
The algebraic intersection form on $H_1(\Sigma;\R)$ defines a symplectic form $\omega_X$ on $X$.
We have
\[\omega_X=\sum_i\tilde\alpha_i\wedge\tilde\beta_i.\]
\end{lemma}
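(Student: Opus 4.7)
The plan is to first realize $\omega_X$ as a translation-invariant 2-form on $X$ coming from the algebraic intersection form, and then read off the explicit formula in the given basis. Because $X=H_1(\Sigma;\R)/H_1(\Sigma;\Z)$ is the quotient of a vector space by a lattice, the tangent space $T_pX$ at any point $p\in X$ is canonically identified with $H_1(\Sigma;\R)$ in a translation-invariant way. I would define $\omega_X$ at every point $p$ to be the alternating bilinear form $\ai$ on $H_1(\Sigma;\R)$ under this identification. Since the identification is translation-invariant and $\ai$ does not depend on the point $p$, this produces a smooth, translation-invariant 2-form on $X$.

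Next I would verify that $\omega_X$ is closed and non-degenerate. Closedness is automatic: a translation-invariant 2-form on a torus has constant coefficients in a translation-invariant coframe, so its exterior derivative vanishes. For non-degeneracy, the algebraic intersection form on $H_1(\Sigma;\R)$ is non-degenerate by Poincar\'e duality (equivalently, by direct inspection of the basis from Section~\ref{sss:basisreps}, where the matrix of $\ai$ is the standard symplectic block matrix), so the same holds in each tangent space, and $\omega_X$ is therefore a symplectic form.

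For the explicit formula, I would work with the basis $[x_1],\ldots,[x_g],[y_1],\ldots,[y_g]$. By construction, $\tilde\alpha_i$ and $\tilde\beta_i$ are precisely the coordinate differentials for this basis, so for any tangent vector $v$ written as $v=\sum_i(a_i[x_i]+b_i[y_i])$, we have $\tilde\alpha_i(v)=a_i$ and $\tilde\beta_i(v)=b_i$. Using the intersection relations $\ai([x_i],[y_j])=\delta_{ij}$ and $\ai([x_i],[x_j])=\ai([y_i],[y_j])=0$, a direct expansion gives $\ai(v,w)=\sum_i(a_id_i-b_ic_i)=\sum_i\bigl(\tilde\alpha_i(v)\tilde\beta_i(w)-\tilde\alpha_i(w)\tilde\beta_i(v)\bigr)$, which is exactly $\bigl(\sum_i\tilde\alpha_i\wedge\tilde\beta_i\bigr)(v,w)$, proving the formula.

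I do not anticipate any serious obstacle: once the canonical identification $T_pX\cong H_1(\Sigma;\R)$ is set up, the entire argument reduces to a pointwise linear-algebra computation propagated over $X$ by translation invariance, with non-degeneracy supplied by Poincar\'e duality.
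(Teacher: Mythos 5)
Your proposal is correct and follows essentially the same route as the paper: identify each tangent space of $X$ with $H_1(\Sigma;\R)$ by translation, transport $\ai$ to a translation-invariant (hence closed) nondegenerate $2$--form, and verify the formula by evaluating on the basis. You simply spell out the coordinate computation and the closedness check that the paper leaves implicit.
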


\begin{proof}
The tangent space to $X$ at $0$ is canonically identified with $H_1(\Sigma;\R)$, and the tangent space to $X$ at any other point can be canonically translated to $0$ by left-multiplication.
So since the algebraic intersection form is a nondegenerate alternating form on $H_1(\Sigma;\R)=T_0X$, it extends smoothly to a nondegenerate, alternating differential $2$--form on all of $X$.

The expression of $\omega_X$ in terms of $\{\tilde\alpha_i,\tilde\beta_i\}_i$ follows from evaluations.
\end{proof}

\begin{lemma}\label{le:jacaxn}
The action of $\Symps$ on $\Sigma$ induces an action on $X$ that is symplectic (for $\omega_X$) and preserves the basepoint $0$.
\end{lemma}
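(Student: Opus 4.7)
The plan is to unwind the induced action and then reduce the verification of each property to a statement at the tangent space of the origin. Any $\phi\in\Symps$ is in particular an orientation-preserving diffeomorphism fixing $*$, so $\phi_*$ defines a linear automorphism of $H_1(\Sigma;\R)$ that restricts to an automorphism of the lattice $H_1(\Sigma;\Z)$. First I would observe that this linear map descends to a diffeomorphism $\rho(\phi)\co X\to X$, and that $\Symps\to\Diff(X)$, $\phi\mapsto\rho(\phi)$, is a group homomorphism. Since $\rho(\phi)$ is induced by a linear map, it sends $0$ to $0$, giving the basepoint-preserving claim immediately.

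For symplecticity, I would use translation-invariance to reduce to the tangent space at the origin. Concretely, $\omega_X$ was constructed to be translation-invariant on $X$ (it was obtained by extending the algebraic intersection form $\ai$ on $T_0X=H_1(\Sigma;\R)$ by left-translation), and $\rho(\phi)$ is a group homomorphism of the torus $X$, so $\rho(\phi)^*\omega_X$ is also translation-invariant. It therefore suffices to check that $(\rho(\phi)^*\omega_X)_0=(\omega_X)_0$. Under the canonical identification $T_0X\cong H_1(\Sigma;\R)$, the differential $d\rho(\phi)_0$ agrees with $\phi_*$ and $(\omega_X)_0$ agrees with $\ai$, so the verification reduces to showing that $\ai(\phi_*u,\phi_*v)=\ai(u,v)$ for all $u,v\in H_1(\Sigma;\R)$.

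This last equality is the standard fact that orientation-preserving diffeomorphisms preserve the algebraic intersection form: pick transverse representatives for $u$ and $v$, push them forward by $\phi$, and use that $\phi$ preserves both transversality and the local orientation at each intersection point, so the signed count of intersection points is preserved. Extending by bilinearity from $H_1(\Sigma;\Z)$ to $H_1(\Sigma;\R)$ gives the general statement. Combining these steps proves the lemma. The only mild care point is to remember that the pullback of a translation-invariant form by a group homomorphism is again translation-invariant, which is what allows the reduction to the origin; beyond that the argument is essentially linear algebra together with the topological invariance of~$\ai$.
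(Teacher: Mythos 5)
Your proposal is correct and follows essentially the same route as the paper: descend the linear action on $H_1(\Sigma;\R)$ to $X$, note that $0$ is fixed, verify $\omega_X$-preservation at the origin via invariance of the algebraic intersection form, and use linearity/translation-invariance to propagate this to all of $X$. The extra detail you give on why $\ai$ is preserved is a harmless elaboration of the same argument.
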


\begin{proof}
The action of $\Symps$ on $\Sigma$ induces an action on $H_1(\Sigma;\R)$ that preserves $H_1(\Sigma;\Z)$ and therefore descends to $X$.
Since symplectomorphisms of $\Sigma$ act on $H_1(\Sigma;\R)$ in a way that preserves intersection number, the action on $X$ preserves $\omega_X$ at $0$.
Since the action on $H_1(\Sigma;\R)$ is linear, the derivative is the same at every point and the action on $X$ is symplectic.
\end{proof}

\begin{definition}\label{de:jacaxn}
Let $\rho\co\Symps\curvearrowright (X,\omega_X)$ denote the symplectic, basepoint-preserving action induced by the action of $\Diff(\Sigma,*)$ on $H_1(\Sigma)$.
\end{definition}

\begin{definition}
The \emph{Abel--Jacobi map} $J\co\Sigma\to X$
is the map sending $p\in\Sigma$ to the image in $X$ of the point 
\[\sum_{i=1}^g\big((\int_*^p\alpha_i)[x_i]+(\int_*^p\beta_i)[y_i]\big)\]
in $H_1(\Sigma;\R)$,
where the integrals are taken over any smooth arc from $*$ to $p$.
\end{definition}
This formula certainly gives a well-defined map from the universal cover $\widetilde\Sigma$; since evaluating this sum of integrals along loops in $\Sigma$ yields elements of $H_1(\Sigma;\Z)$, it descends to a well-defined map on $\Sigma$.
It is also immediate that $J(*)=0$.
Of course, $J$ depends on the choices of $\{\alpha_i,\beta_i\}_i$ and the basepoint~$*$. 

\begin{lemma}\label{le:pullbacks}
The map $J$ is an immersion and for each $i$,
\[J^*(\tilde\alpha_i)=\alpha_i\quad\text{and}\quad J^*(\tilde\beta_i)=\beta_i.\]
\end{lemma}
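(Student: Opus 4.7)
The plan is to compute the pullbacks first, and then deduce the immersion property from the resulting formulas together with the third condition imposed on $\{\alpha_i,\beta_i\}$.

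For the pullbacks, I would work locally. Since the quotient $\pi\co H_1(\Sigma;\R)\to X$ is a covering map, near any point $p\in\Sigma$ I can choose a simply connected neighborhood $U$ and a local lift $\tilde J_U\co U\to H_1(\Sigma;\R)$, so that $J|_U=\pi\circ \tilde J_U$ and the components of $\tilde J_U$ with respect to the basis $[x_i],[y_i]$ are of the form $q\mapsto \int_{*}^q\alpha_i$ and $q\mapsto \int_{*}^q\beta_i$ along a path lying in a fixed simply connected region. Writing $u_i,v_i$ for the linear coordinate functions on $H_1(\Sigma;\R)$ dual to $[x_i],[y_i]$, we have $\tilde\alpha_i=\pi_*(du_i)$ and $\tilde\beta_i=\pi_*(dv_i)$ (since $\pi$ is a local diffeomorphism), so
\[J^*\tilde\alpha_i=\tilde J_U^*\pi^*\tilde\alpha_i=\tilde J_U^*(du_i)=d(u_i\circ\tilde J_U).\]
By the fundamental theorem of calculus, the exterior derivative of $q\mapsto\int_{*}^q\alpha_i$ is just $\alpha_i$, so $J^*\tilde\alpha_i=\alpha_i$ on $U$. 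The same argument gives $J^*\tilde\beta_i=\beta_i$ on $U$, and since $U$ was arbitrary these identities hold globally on $\Sigma$.

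For the immersion statement, I would use the third bulleted condition on the forms $\{\alpha_i,\beta_i\}$ from Section~\ref{sss:jacconventions}: at every point $p\in\Sigma$ there is some index $i$ for which $\alpha_i\wedge\beta_i$ does not vanish at $p$. Then $\alpha_i$ and $\beta_i$ span $T_p^*\Sigma$. Since $J^*\tilde\alpha_i=\alpha_i$ and $J^*\tilde\beta_i=\beta_i$, the codifferential $(dJ_p)^*\co T^*_{J(p)}X\to T^*_p\Sigma$ is surjective, so $dJ_p\co T_p\Sigma\to T_{J(p)}X$ is injective. Hence $J$ is an immersion.

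The main obstacle, such as it is, is only a bookkeeping one: verifying that the local-lift formula $u_i\circ\tilde J_U(q)=\int_{*}^q\alpha_i$ really does behave well under exterior derivative on $\Sigma$, independently of the path chosen to reach $q$. This is immediate once one notes that different choices of path differ by periods in $H_1(\Sigma;\Z)$, which are killed by $\pi$ and hence do not affect the local lift up to an additive constant, which in turn does not affect $d$. Everything else reduces to the elementary linear-algebra observation that a linear map of two-dimensional spaces with two-dimensional image of its transpose is injective.
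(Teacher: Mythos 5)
Your proof is correct and follows essentially the same route as the paper: the paper also computes $D_pJ$ via the fundamental theorem of calculus to read off the pullbacks, and then deduces the immersion property from the condition that some $\alpha_i\wedge\beta_i$ is nonzero at every point. Your version merely spells out the local-lift bookkeeping and phrases the last step via surjectivity of the codifferential rather than the paper's direct contradiction, which is the same argument.
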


\begin{proof}
It follows from elementary calculus that for any tangent vector $v$ to any point $p$ in $\Sigma$, the derivative at $p$ is
\[D_pJ(v)=((\alpha_1)_p(v),\ldots,(\alpha_g)_p(v),(\beta_1)_p(v),\ldots,(\beta_g)_p(v)).\]
The statement about pullbacks follows immediately.
So if $J$ fails to be an immersion at $p$, then $\alpha_i\wedge\beta_i$ is trivial at $p$ for every $i$, contradicting our construction of $\{\alpha_i,\beta_i\}_i$.
\end{proof}

\begin{lemma}
The form 
\[\omega'_\Sigma=\sum_{i=1}^g \alpha_i\wedge\beta_i\]
is a symplectic form with $\mathrm{Area}(\Sigma,\omega'_\Sigma)=g$.
The map $J\co (\Sigma,\omega'_\Sigma)\to(X,\omega_X)$ is a symplectic map.
\end{lemma}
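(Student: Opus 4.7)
The plan is to verify four separate claims: that $\omega'_\Sigma$ is closed, that it is non-degenerate, that its total integral over $\Sigma$ equals $g$, and that $J^*\omega_X=\omega'_\Sigma$.

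First I would observe that each $\alpha_i$ and $\beta_i$ is closed (as it represents a de\,Rham cohomology class), so each wedge $\alpha_i\wedge\beta_i$ is closed and hence $\omega'_\Sigma$ is closed. For non-degeneracy, the second bullet in the construction of $\{\alpha_i,\beta_i\}_i$ guarantees that at every point $p\in\Sigma$ every term $\alpha_i\wedge\beta_i$ is a non-negative multiple of $\omega_\Sigma(p)$, so the terms cannot cancel each other; the third bullet guarantees that at every point at least one such term is strictly positive. Thus $\omega'_\Sigma$ is a non-vanishing $2$--form on the oriented surface~$\Sigma$, hence a symplectic form.

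For the area computation, the key step is to show $\int_\Sigma\alpha_i\wedge\beta_j=\delta_{ij}$. This follows from the standard fact that, under Poincar\'e duality, the cup product pairing on $H^1(\Sigma;\R)$ corresponds to the algebraic intersection pairing on $H_1(\Sigma;\R)$. Because $\{[\alpha_i],[\beta_i]\}_i$ is evaluation-dual to the symplectic basis $\{[x_i],[y_i]\}_i$ of $H_1(\Sigma)$, tracking the sign conventions gives $\int_\Sigma\alpha_i\wedge\beta_j=\ai([x_i],[y_j])=\delta_{ij}$ and $\int_\Sigma\alpha_i\wedge\alpha_j=\int_\Sigma\beta_i\wedge\beta_j=0$. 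Summing then gives
\[\mathrm{Area}(\Sigma,\omega'_\Sigma)=\int_\Sigma\sum_{i=1}^g\alpha_i\wedge\beta_i=\sum_{i=1}^g 1=g.\]
This Poincar\'e duality step is the only place any subtlety enters; with our orientation and sign conventions fixed, this is a routine check.

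Finally, for the symplectic property of $J$, Lemma~\ref{le:pullbacks} gives $J^*\tilde\alpha_i=\alpha_i$ and $J^*\tilde\beta_i=\beta_i$. Since pullback commutes with wedge products and with sums,
\[J^*\omega_X=J^*\!\sum_{i=1}^g \tilde\alpha_i\wedge\tilde\beta_i=\sum_{i=1}^g\alpha_i\wedge\beta_i=\omega'_\Sigma,\]
so $J$ pulls $\omega_X$ back to $\omega'_\Sigma$, proving it is symplectic. I do not anticipate a real obstacle here; the construction of $\{\alpha_i,\beta_i\}_i$ was arranged precisely so that this calculation would work, and the only thing one must be careful about is the sign convention relating cup product to algebraic intersection.
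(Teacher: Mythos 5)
Your proof is correct and follows essentially the same route as the paper: positivity of $\omega'_\Sigma$ from the two bullet conditions, the area count $\int_\Sigma\alpha_i\wedge\beta_i=1$ via Poincar\'e duality, and the pullback identity from Lemma~\ref{le:pullbacks}. The only addition is the explicit closedness check, which is automatic for a top-degree form on a surface.
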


\begin{proof}
Since each $\alpha_i\wedge\beta_i$ is a non-negative multiple of $\omega_\Sigma$ and the $\alpha_i\wedge\beta_i$ do not simultaneously vanish at any point, $\omega'_\Sigma$ is a multiple of $\omega_\Sigma$ by a positive function.
Therefore $\omega'_\Sigma$ is a symplectic form.
Since $\{[\alpha_i],[\beta_i]\}_i$ is evaluation-dual to $\{[x_i],[y_i]\}_i$ and $\ai([x_i,y_i])=1$, it follows from Poincar\'e duality that $\int_\Sigma\alpha_i\wedge\beta_i=1$.
Then $\mathrm{Area}(\Sigma,\omega'_\Sigma)=g$.
By Lemma~\ref{le:pullbacks}, $J$ pulls back $\omega_X$ to $\omega'_\Sigma$.
\end{proof}

We also need one further property of $J$.
\begin{lemma}\label{le:abmap}
The fundamental group $\pi_1(X,0)$ is canonically identified with $H_1(\Sigma;\Z)$ and the induced map $J_*\co \pi_1(\Sigma,*)\to H_1(\Sigma;\Z)$ is the Hurewicz map (the abelianization map of $\pi_1(\Sigma,*)$).
The induced action $\rho_*$ of $\Diff(\Sigma,*)$ on $\pi_1(X,0)$ is the usual action on $H_1(\Sigma;\Z)$.
\end{lemma}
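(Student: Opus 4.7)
The lemma has three assertions, and I would handle them in the order stated, since each feeds into the next.

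For the canonical identification $\pi_1(X,0) \cong H_1(\Sigma;\Z)$, I would invoke standard covering space theory. The universal cover of $X = H_1(\Sigma;\R)/H_1(\Sigma;\Z)$ is the vector space $H_1(\Sigma;\R)$ itself, which is contractible, and the deck transformation group is $H_1(\Sigma;\Z)$ acting by translation. The standard isomorphism between $\pi_1$ and the deck group (once a lift of the basepoint is chosen, here $0 \in H_1(\Sigma;\R)$ lying over $0 \in X$) yields the claimed canonical identification. This identification sends a loop $\gamma'$ in $X$ based at $0$ to the endpoint of its unique lift starting at $0 \in H_1(\Sigma;\R)$.

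For the Hurewicz statement, I would apply this description directly to $J \circ \gamma$ for a loop $\gamma$ in $\Sigma$ based at $*$. By the definition of $J$, the lift of $J \circ \gamma$ to $H_1(\Sigma;\R)$ starting at $0$ is the path $t \mapsto \sum_i \bigl(\int_0^t \gamma^*\alpha_i\bigr)[x_i] + \bigl(\int_0^t \gamma^*\beta_i\bigr)[y_i]$. Its endpoint is $\sum_i \bigl(\int_\gamma \alpha_i\bigr)[x_i] + \bigl(\int_\gamma \beta_i\bigr)[y_i]$. Writing $[\gamma] = \sum_i a_i [x_i] + b_i [y_i]$ in $H_1(\Sigma;\Z)$, the evaluation-duality of $\{[\alpha_i],[\beta_i]\}_i$ with $\{[x_i],[y_i]\}_i$ gives $\int_\gamma \alpha_i = a_i$ and $\int_\gamma \beta_i = b_i$. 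So the lift ends at $[\gamma]$, which under our canonical identification is precisely $J_*[\gamma] = [\gamma]$; that is, $J_*$ is the abelianization map.

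For the final assertion about $\rho_*$, I would observe that $\rho(\phi)\colon X \to X$ is by construction the quotient of the $\R$-linear map $\phi_*\colon H_1(\Sigma;\R) \to H_1(\Sigma;\R)$ (which preserves the lattice $H_1(\Sigma;\Z)$). Any such linear quotient map lifts to itself on the universal cover $H_1(\Sigma;\R)$ with the lift sending $0$ to $0$, so under the identification $\pi_1(X,0) = H_1(\Sigma;\Z)$ via endpoints of lifts, the induced map $\rho(\phi)_*$ is exactly the restriction of $\phi_*$ to $H_1(\Sigma;\Z)$. None of this requires anything beyond the definitions, so the only step with real content is the evaluation-duality computation in the middle paragraph; the main (mild) obstacle is simply keeping the conventions for $J$, the basis, and the deck group identification consistent.
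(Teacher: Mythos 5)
Your proof is correct and follows essentially the same route as the paper's: identify $\pi_1(X,0)$ with $H_1(\Sigma;\Z)$ via the contractible universal cover $H_1(\Sigma;\R)$, compute the endpoint of the lift of $J\circ\gamma$ from the integral definition of $J$ and evaluation-duality, and note that $\rho(\phi)$ lifts to the linear map $\phi_*$. The paper merely states that ``unpacking the definition of $J$'' gives the result; your middle paragraph supplies exactly that unpacking.
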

\begin{proof}
Since $X=H_1(\Sigma;\R)/H_1(\Sigma;\Z)$ and $H_1(\Sigma;\R)$ is contractible, $\pi_1(X,0)$ is naturally isomorphic to $H_1(\Sigma;\Z)$.
For a based loop $a\co [0,1]\to\Sigma$, $J_*[a]\in H_1(\Sigma)$ is the far endpoint of the lift of $J\circ a$ to a path on $H_1(\Sigma;\R)$ starting at $0$.
Unpacking the definition of $J$, this endpoint is easily seen to be $[a]\in H_1(\Sigma)$.
The statement about actions also follows.
\end{proof}

\subsubsection{A convention on $\omega_\Sigma$}
By the Moser stability theorem (Theorem~\ref{th:moser}), there is a diffeomorphism pulling back $\omega'_\Sigma=\sum_i\alpha_i\wedge\beta_i$ to a constant multiple of $\omega_\Sigma$.
We can therefore pull back all of our constructions by this diffeomorphism and renormalize, and declare that $\omega_\Sigma=\omega'_\Sigma$.
Therefore $\omega_\Sigma=\sum_i\alpha_i\wedge\beta_i$, the map $J\co (\Sigma,\omega_\Sigma)\to(X,\omega_X)$ is symplectic, and  $\mathrm{Area}(\Sigma,\omega_\Sigma)=g$.

\subsection{The Johnson homomorphism}\label{ss:johnsonprelim}
The Johnson homomorphism is a map from the Torelli group $\ptI$ to an abelian group.
Johnson introduced this homomorphism in~\cite{joabquo}.
In his survey~\cite{josurv}, Johnson gives three equivalent definitions of the Johnson homomorphism;
the ``first definition" being the one in~\cite{joabquo}.
We will not use the ``first definition," but we mention for context that it is an algebraic definition, using the action of $\ptI$ on a nonabelian nilpotent quotient of $\pi_1(\Sigma)$.
Instead we explain the ``third definition" from~\cite{josurv} and quote some results on the Johnson homomorphism.
\subsubsection{The third definition}
This definition is well suited to the application in this paper.
This uses the Jacobian $X$ as defined in Section~\ref{ss:jacobianprelim}.
If $\phi\in\Diff(\Sigma,*)$ then the map $J\circ \phi\co \Sigma\to X$ induces the map $\rho(\phi)_*J_*\co\pi_1(\Sigma,*)\to\pi_1(X,0)$ on fundamental groups, where $\rho$ is the action from Definition~\ref{de:jacaxn}.
By Lemma~\ref{le:abmap}, $\pi_1(X,0)$ is $H_1(\Sigma)$ and the action is the usual one.
So if $\phi$ represents a class $[\phi]\in\ptI$, then the maps $J, J\circ \phi\co \Sigma\to X$ both induce the same map on fundamental groups.
Since $\Sigma$ and $X$ are aspherical, there is a homotopy $\bar K\co \Sigma\times[0,1]\to X$ from $J$ to $J\circ \phi$, through basepoint-preserving maps.
Let $M_\phi$ be the mapping cylinder of $\phi$, which is $M_\phi=\Sigma\times[0,1]/\mathord{\sim}$, where $(p,0)\sim(\phi(p),1)$.
It is easy to check that $\bar K$ defines a continuous map $K\co M_\phi\to X$.

\begin{definition}[Johnson~\cite{josurv}]
\label{de:thirddefn}
The Johnson homomorphism
$\johom\co\ptI\to H_3(X)\cong \bigwedge^3 H_1(\Sigma)$
is the map sending $[\phi]\in\ptI$ to $\johom([\phi])=K_*[M_\phi]$, the push-forward of the fundamental class of $M_\phi$, where $M_\phi$ is the mapping cylinder of $\phi$ and $K\co M_\phi\to X$ is induced by a homotopy from $J$ to $J\circ\phi$.
\end{definition}
This definition is independent of the choices made, which are the representative of $[\phi]$ and the homotopy $\bar K$.

\subsubsection{Equivariance and an evaluation}
The mapping class group $\ptM$ acts on $H_1(\Sigma)$ in the usual way, inducing the diagonal action on $\bigwedge^3 H_1(\Sigma)$.
Under the induced action by $\rho$, $H_3(X)$ and $\bigwedge^3 H_1(\Sigma)$ are isomorphic as $\ptM$--modules.

The following is Lemma~2D of Johnson~\cite{joabquo}.
\begin{theorem}[Johnson~\cite{josurv}]
The Johnson homomorphism is $\ptM$--equivariant:
for any $[\phi]\in\ptI$ and $[\psi]\in\ptM$,
\[\johom([\psi][\phi][\psi^{-1}])=[\psi]\cdot \johom([\phi]).\]
\end{theorem}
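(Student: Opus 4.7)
My plan is to use the third definition of $\johom$ from Definition~\ref{de:thirddefn} together with a canonical identification between the mapping cylinders $M_\phi$ and $M_{\psi\phi\psi^{-1}}$ induced by $\psi$ itself. Fix representatives $\phi$ and $\psi$ and a homotopy $\bar K\co \Sigma\times[0,1]\to X$ whose descent $K\co M_\phi\to X$ represents $\johom([\phi])=K_*[M_\phi]$. Define $F\co M_\phi\to M_{\psi\phi\psi^{-1}}$ by $F(p,t)=(\psi(p),t)$; this respects the two gluings since in $M_{\psi\phi\psi^{-1}}$ we have $(\psi(p),0)\sim((\psi\phi\psi^{-1})(\psi(p)),1)=(\psi\phi(p),1)=F(\phi(p),1)$, and because $\psi$ is orientation-preserving, $F$ is an orientation-preserving homeomorphism with $F_*[M_\phi]=[M_{\psi\phi\psi^{-1}}]$. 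I then set $L=\rho(\psi)\circ K\circ F^{-1}\co M_{\psi\phi\psi^{-1}}\to X$.

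The heart of the argument is verifying that $L$ computes $\johom([\psi][\phi][\psi^{-1}])$. The lift of $L$ to $\Sigma\times[0,1]$ is the homotopy $\bar L(q,t)=\rho(\psi)(\bar K(\psi^{-1}(q),t))$, whose endpoint restrictions are $\rho(\psi)\circ J\circ \psi^{-1}$ and $\rho(\psi)\circ J\circ \phi\circ \psi^{-1}$. By Lemma~\ref{le:abmap} these induce the same maps on $\pi_1(\Sigma,*)$ as the desired endpoints $J$ and $J\circ(\psi\phi\psi^{-1})$, respectively. Since $X$ is aspherical and everything is based, each endpoint map is homotopic rel basepoint to the required one, and concatenating such homotopies with $\bar L$ produces a bona fide Johnson homotopy $\bar{\tilde L}$ for $\psi\phi\psi^{-1}$; the resulting $\tilde L\co M_{\psi\phi\psi^{-1}}\to X$ agrees with $L$ on $\pi_1$, so asphericity of $X$ gives $L\simeq \tilde L$ and hence $L_*[M_{\psi\phi\psi^{-1}}]=\johom([\psi][\phi][\psi^{-1}])$.

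Putting these pieces together yields
\[\johom([\psi][\phi][\psi^{-1}])=L_*[M_{\psi\phi\psi^{-1}}]=\rho(\psi)_*K_*F^{-1}_*[M_{\psi\phi\psi^{-1}}]=\rho(\psi)_*K_*[M_\phi]=\rho(\psi)_*\johom([\phi]),\]
and the theorem is recovered by noting that under the canonical isomorphism $H_3(X)\cong\bigwedge^3 H_1(\Sigma)$ for the torus, the action of $\rho(\psi)_*$ corresponds to the diagonal action of $\psi_*$ on $\bigwedge^3 H_1(\Sigma)$ (the action described just before the theorem). The main obstacle is the middle step: showing that the ad hoc candidate $L$ genuinely represents the Johnson invariant of $\psi\phi\psi^{-1}$. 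The asphericity of $X$ lets one bypass any explicit construction of an interpolating homotopy and reduces the check to an agreement of induced maps on $\pi_1$, which is immediate from Lemma~\ref{le:abmap} and the fact that $[\phi]\in\ptI$.
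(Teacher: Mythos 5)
Your argument is correct, but note that the paper does not actually prove this statement: it quotes it as Lemma~2D of Johnson's paper and moves on, so there is no in-text proof to compare against. What you have written is essentially the standard proof of equivariance from the ``third definition,'' and it is sound: the homeomorphism $F(p,t)=(\psi(p),t)$ does respect the gluings of the two mapping cylinders and carries fundamental class to fundamental class, and your candidate $L=\rho(\psi)\circ K\circ F^{-1}$ has the right endpoint maps up to based homotopy. The one place where you lean on an unstated verification is the claim that $L$ and the honest Johnson representative $\tilde L$ induce the same map on $\pi_1(M_{\psi\phi\psi^{-1}})$: on the fiber $\Sigma\times\{0\}$ this follows from Lemma~\ref{le:abmap} exactly as you say, but you also need to check the remaining generator, the loop $t\mapsto(*,t)$ in the $S^1$ direction; both $L$ and $\tilde L$ send it to the constant loop at $0\in X$ because all the homotopies involved are based, so the check goes through. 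With that, asphericity of $X$ gives $L\simeq\tilde L$, hence $L_*[M_{\psi\phi\psi^{-1}}]=\johom([\psi][\phi][\psi]^{-1})$, and the chain of equalities you display, together with the identification of $\rho(\psi)_*$ on $H_3(X)$ with the diagonal action of $\psi_*$ on $\bigwedge^3H_1(\Sigma)$ stated just before the theorem, finishes the proof. (One cosmetic point inherited from the paper: with the convention $\bar K(\cdot,0)=J$, $\bar K(\cdot,1)=J\circ\phi$, the gluing that makes $\bar K$ descend is $(p,1)\sim(\phi(p),0)$ rather than the one printed; your computation is internally consistent with the printed convention, and either way the argument is unaffected.)
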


The following evaluation is a case of Lemma~4B of Johnson~\cite{joabquo}.
(In Johnson's notation, our $x_i$ is $b_i$ and our $y_i$ is $-a_i$.)
\begin{theorem}[Johnson homomorphism evaluation]\label{th:jhvalue}
The Johnson homomorphism has the following value:
\[
\johom(T_{x_g}T_{x'_g}^{-1})=\left(\sum_{i=1}^{g-1}[x_i]\wedge[y_i]\right)\wedge[x_g].\]
\end{theorem}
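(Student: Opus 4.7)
The plan is to apply the third definition of the Johnson homomorphism (Definition~\ref{de:thirddefn}) directly. By the discussion following Theorem~\ref{th:birmanexact}, $\phi := T_{x_g}T_{x'_g}^{-1}$ is a point-pushing map along a loop $\gamma$ parallel to $x_g$; in particular, there is a smooth isotopy $\phi_t$ from the identity to $\phi$ with $\phi_t(*)=\gamma(t)$, and $[\gamma]=[x_g]$ in $H_1(\Sigma;\Z)$.

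I would build the basepoint-preserving homotopy $\bar K\co \Sigma\times[0,1]\to X$ from $J$ to $J\circ\phi$ by exploiting the abelian group structure of $X$: set $\bar K(p,t)=J(\phi_t(p))-J(\gamma(t))$. Then $\bar K(p,0)=J(p)$, $\bar K(p,1)=J(\phi(p))$, and $\bar K(*,t)=J(\gamma(t))-J(\gamma(t))=0$, so $\bar K$ descends to a map $K\co M_\phi\to X$. By Definition~\ref{de:thirddefn}, $\johom(\phi)=K_*[M_\phi]\in H_3(X)\cong \bigwedge^3 H_1(\Sigma)$, and I would identify the class by pairing it with the basis of $H^3(X)$ consisting of triple wedges $\tilde\mu_a\wedge\tilde\mu_b\wedge\tilde\mu_c$, each $\mu_a\in\{\alpha_i,\beta_i\}$.

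By Lemma~\ref{le:pullbacks} and the translation-invariance of the $\tilde\alpha_i,\tilde\beta_i$, a direct derivative calculation gives
\[K^*\tilde\mu_a=\phi_t^*\mu_a+G_a(p,t)\,dt\]
on $\Sigma\times[0,1]$, where $G_a(p,t)=\mu_a(V_t(\phi_t(p)))-\mu_a(\gamma'(t))$ and $V_t$ is the isotopy's generating vector field. When one expands the wedge product, the $dt^2$- and $dt^3$-terms vanish, as does the pure $\phi_t^*\mu$-term (a 3-form on the 2-manifold $\Sigma$). After the substitution $q=\phi_t(p)$ and regrouping, the dynamical contributions assemble into $\iota_{V_t}(\mu_a\wedge\mu_b\wedge\mu_c)$, which is again zero by dimension. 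What remains is purely topological:
\[\int_{M_\phi}K^*(\tilde\mu_a\wedge\tilde\mu_b\wedge\tilde\mu_c)=\pm\bigl(C_aI_{bc}-C_bI_{ac}+C_cI_{ab}\bigr),\]
with $C_a=\int_\gamma\mu_a$ and $I_{ab}=\int_\Sigma\mu_a\wedge\mu_b$.

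Finally I would plug in the values. Since $[\gamma]=[x_g]$, $C_a=1$ exactly when $\mu_a=\alpha_g$ and is $0$ otherwise; since $\{[\alpha_i],[\beta_j]\}$ is evaluation-dual to $\{[x_i],[y_j]\}$, $I_{ab}=\pm 1$ exactly when $\{\mu_a,\mu_b\}=\{\alpha_i,\beta_i\}$ for some $i$, and $I_{ab}=0$ otherwise. The only triples giving nonzero pairings therefore consist of $\alpha_g$ together with a pair $\{\alpha_i,\beta_i\}$ with $i\neq g$, and matching the resulting coefficients against the dual pairing between $\bigwedge^3H_1(\Sigma)$ and $\bigwedge^3H^1(\Sigma)$ yields exactly the class $\bigl(\sum_{i=1}^{g-1}[x_i]\wedge[y_i]\bigr)\wedge[x_g]$. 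The main obstacle is bookkeeping the signs introduced by the orientation of $M_\phi$, the direction of $\gamma$ coming from the convention $T_{c_L}T_{c_R}^{-1}$ for point-pushing along $c$, and the identification $H_3(X)\cong\bigwedge^3 H_1(\Sigma)$; checking a single explicit triple (e.g.\ $(\alpha_g,\alpha_1,\beta_1)$) against these conventions fixes the sign and delivers the asserted formula.
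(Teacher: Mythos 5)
Your argument is essentially correct, but it takes a genuinely different route from the paper: the paper does not prove Theorem~\ref{th:jhvalue} at all, it simply quotes it as a special case of Lemma~4B of Johnson's original article (which computes $\johom$ on such elements using the ``first,'' purely algebraic definition via the action on a nilpotent quotient of $\pi_1$), together with a dictionary between notations. What you supply instead is a self-contained de~Rham computation directly from the ``third definition'' (Definition~\ref{de:thirddefn}): your explicit homotopy $\bar K(p,t)=J(\phi_t(p))-J(\gamma(t))$ is legitimate since the definition is independent of the choice of $\bar K$, the pullback formula $K^*\tilde\mu_a=\phi_t^*\mu_a+G_a\,dt$ is right, the pure spatial term and the contraction $\iota_{V_t}(\mu_a\wedge\mu_b\wedge\mu_c)$ both die for dimension reasons, and the surviving term $\pm\bigl(C_aI_{bc}-C_bI_{ac}+C_cI_{ab}\bigr)$ together with $C_a=\delta_{\mu_a,\alpha_g}$ and $I_{ab}=\pm\delta_{\{\mu_a,\mu_b\},\{\alpha_i,\beta_i\}}$ does single out exactly the triples $\{\alpha_g,\alpha_i,\beta_i\}$ with $i<g$ and reproduces $\bigl(\sum_{i<g}[x_i]\wedge[y_i]\bigr)\wedge[x_g]$ up to an overall sign. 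The sign bookkeeping you defer is genuinely the delicate point (the direction of the pushing loop determined by $T_{x_g}T_{x'_g}^{-1}$, the orientation of $M_\phi$, and the paper's gluing convention $(p,0)\sim(\phi(p),1)$, which as written actually forces the homotopy to run from $J\circ\phi$ to $J$ for $\bar K$ to descend), so checking one explicit triple as you propose is not optional but is sufficient. The trade-off: your approach gives an independent, geometric verification of Johnson's evaluation and fits the spirit of Section~\ref{ss:pfjacvssec}, at the cost of a longer computation; the paper's citation is shorter but leaves the reader to reconcile two different definitions of $\johom$ and two sign conventions.
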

As mentioned above, $T_{x_g}T_{x'_g}^{-1}$ is a point-pushing map along a simple closed curve freely homotopic to $x_g$ that is based at $*$.

\subsubsection{The contraction}\label{sss:contraction}
The $\ptM$--module $\bigwedge^3 H_1(\Sigma)$ is not irreducible.
In fact, we can use the algebraic intersection form to define a $\ptM$--equivariant map.

\begin{definition}
There is a $\ptM$--module contraction
\[
\Phi\co \bigwedge^3 H_1(\Sigma)
\to H_1(\Sigma)\]
given on generators by
\[a\wedge b\wedge c \mapsto (\ai(a,b)c+\ai(b,c)a+\ai(c,a)b).
\]
\end{definition}
The value of $\Phi$ on $a\wedge b\wedge c$ is invariant under positive-sign permutations of $(a,b,c)$, so it is well defined.
Since $\ai$ is $\ptM$--invariant, the map of abelian groups given by this formula is indeed a map of $\ptM$--modules.
The next statement is used in the proof of Theorem~\ref{mt:hypvssec}.

\begin{corollary}\label{co:cjhvalue}
The contracted Johnson homomorphism has the following value:
\[\Phi(\johom(T_{x_g}T_{x'_g}^{-1}))= (g-1) [x_g].\]
Consequently, if $\phi$ is a point-pushing map along a simple closed curve $a$, we have
\[\Phi(\johom([\phi]))=(g-1)[a].\]
\end{corollary}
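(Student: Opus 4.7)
The plan is to apply the contraction $\Phi$ termwise to the evaluation of $\johom$ given by Theorem~\ref{th:jhvalue}, and then bootstrap from the particular map $T_{x_g}T_{x'_g}^{-1}$ to an arbitrary simple closed curve using the $\ptM$--equivariance of both $\johom$ and $\Phi$.

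For the first assertion, I would expand by linearity:
\[\Phi(\johom(T_{x_g}T_{x'_g}^{-1}))=\sum_{i=1}^{g-1}\Phi([x_i]\wedge[y_i]\wedge[x_g]).\]
Each summand unwinds by the defining formula for $\Phi$ to
\[\ai([x_i],[y_i])[x_g]+\ai([y_i],[x_g])[x_i]+\ai([x_g],[x_i])[y_i].\]
The basis conventions of Section~\ref{sss:basisreps} give $\ai([x_i],[y_i])=1$ and $\ai([y_i],[x_g])=\ai([x_g],[x_i])=0$ for $i<g$, so each summand collapses to $[x_g]$ and the total is $(g-1)[x_g]$.

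For the consequence, I would invoke the change of coordinates principle to produce $[\psi]\in\ptM$ with $\psi_*[x_g]=[a]$ and with $\psi$ carrying the pair $(x_g,x'_g)$ to a parallel pair $(a,a')$; then the point-pushing map along $a$ is realized up to isotopy as $[\psi](T_{x_g}T_{x'_g}^{-1})[\psi]^{-1}$. Applying the $\ptM$--equivariance of $\johom$ (stated just before Theorem~\ref{th:jhvalue}) and of $\Phi$ (immediate from the $\ptM$--invariance of $\ai$), together with the first part, yields
\[\Phi(\johom([\phi]))=\psi_*\bigl(\Phi(\johom(T_{x_g}T_{x'_g}^{-1}))\bigr)=(g-1)\psi_*[x_g]=(g-1)[a].\]

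There is no substantive obstacle: the first statement is a short bookkeeping calculation against the fixed symplectic basis, and the second is transport of structure by equivariance. The only point to be careful about is that every point-pushing representative of the pushing class along $a$ is $\ptM$--conjugate to $T_aT_{a'}^{-1}$, so the value is independent of the specific representative $\phi$ chosen.
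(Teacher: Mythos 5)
Your argument is correct and follows the paper's proof essentially verbatim: the first claim is the termwise contraction of the value in Theorem~\ref{th:jhvalue} using the intersection conventions of Section~\ref{sss:basisreps}, and the second is transport by the $\ptM$--equivariance of $\Phi\circ\johom$ after conjugating to $T_{x_g}T_{x'_g}^{-1}$. The only caveat (shared with the paper) is that the change-of-coordinates step requires $a$ to be non-separating; for a separating $a$ one would instead note that $\Phi\circ\johom$ restricted to the point-pushing copy of $\pi_1(\Sigma,*)$ is a homomorphism factoring through $H_1(\Sigma)$, so it vanishes there, consistent with $(g-1)[a]=0$.
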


\begin{proof}
The first statement follows immediately from Theorem~\ref{th:jhvalue} and the definition of $\Phi$.
The second statement follows from the first statement, from the equivariance of $\Phi\circ\johom$, and from the fact that any point-pushing map on $\Sigma$ is conjugate to $T_{x_g}T_{x'_g}^{-1}$.
\end{proof}
We will need the following in the proof of Theorem~\ref{mt:jacvssec}.

\begin{lemma}\label{le:contractadjoint}
Under the evaluation pairings $\angb{,}\co H_1(\Sigma)\otimes H_1(\Sigma)^*\to \Z$ and $\angb{,}\co \bigwedge^3H_1(\Sigma)\otimes \bigwedge^3H_1(\Sigma)^*\to \Z$, the contraction $\Phi$ is adjoint to the map 
\[
\begin{split}
H_1(\Sigma)^*&\to \bigwedge^3H_1(\Sigma)^*\\
\alpha&\mapsto \ai\wedge\alpha.
\end{split}\]
Therefore for any $x\in \bigwedge^3 H_1(\Sigma)$ and $\alpha\in H_1(\Sigma)^*$, we have
\[\angb{\alpha,\Phi(x)}=\angb{\ai\wedge\alpha,x}.\]
\end{lemma}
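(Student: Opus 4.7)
The plan is to reduce to the case of decomposable elements by multilinearity and then verify the identity by direct computation.

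First, I would observe that both sides of $\angb{\alpha,\Phi(x)}=\angb{\ai\wedge\alpha,x}$ are linear in $x\in\bigwedge^3 H_1(\Sigma)$ and in $\alpha\in H_1(\Sigma)^*$, so it suffices to verify the identity when $x = a\wedge b\wedge c$ is a simple wedge. For such $x$, the definition of $\Phi$ from Section~\ref{sss:contraction} gives immediately
\[\angb{\alpha,\Phi(a\wedge b\wedge c)} \;=\; \ai(a,b)\,\alpha(c) + \ai(b,c)\,\alpha(a) + \ai(c,a)\,\alpha(b).\]

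Second, I would expand the right-hand side using the standard formula for the wedge product of a $2$-form with a $1$-form together with the evaluation pairing between $\bigwedge^3 H_1(\Sigma)^*$ and $\bigwedge^3 H_1(\Sigma)$. With the usual shuffle convention, the evaluation satisfies
\[\angb{\ai\wedge\alpha,\, a\wedge b\wedge c} \;=\; \ai(a,b)\,\alpha(c) - \ai(a,c)\,\alpha(b) + \ai(b,c)\,\alpha(a),\]
and using the antisymmetry $\ai(a,c) = -\ai(c,a)$ this becomes
\[\ai(a,b)\,\alpha(c) + \ai(c,a)\,\alpha(b) + \ai(b,c)\,\alpha(a),\]
matching the expression computed for $\angb{\alpha,\Phi(a\wedge b\wedge c)}$.

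The conclusion $\angb{\alpha,\Phi(x)}=\angb{\ai\wedge\alpha,x}$ for arbitrary $x$ and $\alpha$ then follows by extending linearly, which is also precisely the statement that $\Phi$ and $\alpha\mapsto \ai\wedge\alpha$ are adjoint with respect to the evaluation pairings. The only mildly delicate point — not really an obstacle — is keeping the conventions for the evaluation pairings normalized consistently (so no stray factorials appear); once those conventions are fixed, the lemma reduces to the one-line computation above.
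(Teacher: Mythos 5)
Your proposal is correct and takes essentially the same route as the paper: reduce by multilinearity to decomposable elements $a\wedge b\wedge c$ and verify the identity by direct computation with the shuffle convention for the evaluation pairing. The paper merely calls this computation ``routine'' and restricts to basis elements, whereas you spell it out for arbitrary decomposables; the substance is identical.
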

\begin{proof}
Let $a,b,c$ be in our basis for $H_1(\Sigma)$ and let $d^*$ be in the evaluation-dual basis for $H_1(\Sigma)^*$.
Then it is a routine computation to verify the identity for $x=a\wedge b\wedge c$ and $\alpha=d^*$.
The lemma follows by linearity.
\end{proof}

\subsubsection{Extensions to crossed homomorphisms}
We need the following result for Theorem~\ref{mt:hypvssec} and Corollary~\ref{mc:eflmaps}.
\begin{proposition}\label{pr:ext}
The contracted Johnson homomorphism $\Phi\circ\johom$ extends to a crossed homomorphism $\ptM\to H_1(\Sigma;\R)$.
\end{proposition}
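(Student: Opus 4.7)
The plan is to extend the Johnson homomorphism \emph{itself} to a crossed homomorphism on $\ptM$, then post-compose with the contraction $\Phi$. Morita established exactly the extension needed: he constructed a crossed homomorphism $\mohom{g,1}\co\ptM\to\tfrac{1}{2}\bigwedge^3 H_1(\Sigma)$ whose restriction to $\ptI$ is the Johnson homomorphism $\johom$. Tensoring with $\R$ (which washes out the factor of $1/2$), we may view this as a crossed homomorphism $\ptM\to\bigwedge^3 H_1(\Sigma;\R)$ extending $\johom$.

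The proof is then a formal consequence of the following general observation. If $G$ is a group, $M,N$ are $G$--modules, $f\co G\to M$ is a crossed homomorphism, and $\Psi\co M\to N$ is a $G$--equivariant linear map, then $\Psi\circ f\co G\to N$ is again a crossed homomorphism, since
\[(\Psi\circ f)(ab)=\Psi(a\cdot f(b)+f(a))=a\cdot\Psi(f(b))+\Psi(f(a)).\]
Applying this with $G=\ptM$, $M=\bigwedge^3 H_1(\Sigma;\R)$, $N=H_1(\Sigma;\R)$, $f=\mohom{g,1}$, and $\Psi=\Phi$ — which is $\ptM$--equivariant by the discussion in Section~\ref{sss:contraction} — produces a crossed homomorphism $\Phi\circ\mohom{g,1}\co \ptM\to H_1(\Sigma;\R)$ whose restriction to $\ptI$ is $\Phi\circ\johom$.

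The main obstacle is invoking Morita's extension correctly; this requires a careful pointer to the literature (and in the closed case, attention to the factor of $2$). If one wished to avoid citing Morita altogether, one could instead analyze the Lyndon--Hochschild--Serre spectral sequence for the extension
\[1\to\ptI\to\ptM\to\Sp(2g,\Z)\to 1,\]
which identifies the set of crossed-homomorphism extensions of $\Phi\circ\johom$ with the preimage in $H^1(\ptM;H_1(\Sigma;\R))$ of the $\Sp(2g,\Z)$--invariant class represented by $\Phi\circ\johom\in \Hom_{\Sp(2g,\Z)}(\ptI,H_1(\Sigma;\R))$, and the obstruction is the image of this class under the transgression $d_2$ into $H^2(\Sp(2g,\Z);H_1(\Sigma;\R))$. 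Vanishing of this obstruction from first principles would be considerably more work than quoting Morita, so the cited route is preferable.
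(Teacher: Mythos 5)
Your proposal is correct and is essentially the paper's own proof: the paper likewise takes Morita's crossed homomorphism $\kappa\co\ptM\to\bigwedge^3 H_1(\Sigma;\R)$ extending $\johom$ and post-composes with the $\ptM$--equivariant contraction $\Phi$. Your explicit verification that an equivariant linear map carries crossed homomorphisms to crossed homomorphisms is the (unstated) justification the paper relies on, so nothing is missing.
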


\begin{proof}
If $\kappa\co\ptM\to \bigwedge^3 H_1(\Sigma;\R)$ is a crossed homomorphism extending $\johom$, then $\Phi\circ \kappa$ is such a map.
Morita constructed such a $\kappa$ by algebraic methods in~\cite{mejo} (there is also a topological construction due to the author in~\cite{day}).
\end{proof}

Morita showed in~\cite{jacfam1}, Section~6, that $H^1(\ptM;H_1(\Sigma;\R))\cong\R$.
From this, Proposition~\ref{pr:ext} and the definition of $\ptI$, it follows that \emph{any} crossed homomorphism $\ptM\to H^1(\Sigma;\R)$ restricts to a constant multiple of $\Phi\circ\johom$ on $\ptI$.
Morita also gave a combinatorial construction of a nontrivial crossed homomorphism $\ptM\to H^1(\Sigma;\R)$ in~\cite{jacfam1}.
Finally, we note that Earle constructed a nontrivial crossed homomorphism $\ptM\to H^1(\Sigma;\R)$ in~\cite{earle}.

\subsection{Hyperbolic metrics}\label{ss:hyperbolicprelim}
\subsubsection{Existence of a metric}
Let $h$ be a hyperbolic metric such that the area form $dV_h$ is a constant multiple of $\omega_\Sigma$.
We demand that our basepoint $*$ does not lie on any closed geodesic.
We may do this since there are countably many closed $h$--geodesics and their union has measure zero.
\begin{lemma}
Given an area form $\omega_\Sigma$, there is a hyperbolic metric $h$ whose area form is a constant multiple of~$\omega_\Sigma$.
\end{lemma}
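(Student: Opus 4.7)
The plan is to build $h$ by taking an arbitrary hyperbolic metric on $\Sigma$ and correcting its area form via Moser's stability theorem. The existence of \emph{some} hyperbolic metric is given by the uniformization theorem (since $g\geq 2$); the content of the lemma is purely about matching the cohomology class of the area form to that of $\omega_\Sigma$.

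First I would pick any hyperbolic metric $h_0$ on $\Sigma$. By the Gauss--Bonnet theorem, its area form $dV_{h_0}$ satisfies $\int_\Sigma dV_{h_0}=-2\pi\chi(\Sigma)=4\pi(g-1)$. Under our convention $\mathrm{Area}(\Sigma,\omega_\Sigma)=g$, I set
\[c=\frac{4\pi(g-1)}{g},\]
so that the symplectic forms $dV_{h_0}$ and $c\,\omega_\Sigma$ on $\Sigma$ have equal total area and hence determine the same de\,Rham cohomology class in $H^2(\Sigma;\R)$.

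Next I would invoke Moser stability for surfaces (Theorem~\ref{th:moser}) to obtain a diffeomorphism $\phi\in\Diff(\Sigma)$ with $\phi^*dV_{h_0}=c\,\omega_\Sigma$. Setting $h=\phi^*h_0$ gives the desired metric: pullback by a diffeomorphism preserves constant curvature, so $h$ is still hyperbolic, and its Riemannian area form is
\[dV_h=\phi^*dV_{h_0}=c\,\omega_\Sigma,\]
as required.

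There is essentially no obstacle here beyond remembering that Moser requires matching cohomology classes, which is why the normalization constant $c$ must be chosen before applying the theorem. The later demand that the basepoint $*$ avoid the countable union of closed $h$--geodesics is independent of this lemma and can be arranged afterward by moving $*$ on the measure-zero set of geodesic points (or equivalently, composing $\phi$ with a small further isotopy supported away from a neighborhood of $*$, using the refinement of Moser stability that fixes a submanifold).
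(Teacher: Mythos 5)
Your proof is correct and follows the same route as the paper: take any hyperbolic metric (uniformization), use Moser stability to find a diffeomorphism pulling back its area form to a constant multiple of $\omega_\Sigma$, and pull back the metric by that diffeomorphism. The only difference is that you make the normalization constant $c=4\pi(g-1)/g$ explicit via Gauss--Bonnet, which the paper leaves implicit.
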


\begin{proof}
Since the genus of $\Sigma$ is greater than one, $\Sigma$ has a hyperbolic metric $h'$ (for a reference, see Chapter~9.2 of Ratcliffe~\cite{ratcliffe}).
By the Moser stability theorem (Theorem~\ref{th:moser}), there is a diffeomorphism pulling back $dV_{h'}$ to a constant multiple of $\omega_\Sigma$.
If we pull back $h'$ by this same diffeomorphism, we get another hyperbolic metric $h$ such that $dV_{h}$ is a constant multiple of~$\omega_\Sigma$.
\end{proof}

\subsubsection{Properties of hyperbolic metrics}
\begin{theorem}[Gauss--Bonnet]
On a surface with a hyperbolic metric, the area of a triangle with geodesic boundary and interior angles $\alpha, \beta, \gamma$ is
\[\pi-\alpha-\beta-\gamma.\]
Further, the total area of a hyperbolic surface $(\Sigma',h')$ with geodesic boundary  (possibly without boundary) is $-2\pi$ times the Euler characteristic:
\[\mathrm{Area}(\Sigma', dV_{h'})=-2\pi\chi(\Sigma').\]
\end{theorem}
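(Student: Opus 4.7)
The plan is to first establish the triangle formula and then derive the global area formula via triangulation, using Euler's relation.

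For the triangle area, I would work in the upper half-plane model of $\mathbb{H}^2$, where the area form is $dx\,dy/y^2$ and isometries act by Möbius transformations. The key intermediate step is the \emph{asymptotic triangle} formula: a triangle with one ideal vertex and finite angles $\alpha,\beta$ has area $\pi-\alpha-\beta$. To prove this, I would normalize so that the ideal vertex lies at $\infty$ and the finite side is an arc of the unit semicircle. The region is then $\{(x,y):\cos(\pi-\beta)\le x\le\cos\alpha,\;\sqrt{1-x^2}\le y\}$, and evaluating $\int\!\!\int dx\,dy/y^2$ reduces, after the substitution $x=\cos\theta$, to $\int_\alpha^{\pi-\beta}d\theta=\pi-\alpha-\beta$. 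To pass from asymptotic triangles to compact ones, I would take a geodesic triangle with angles $\alpha,\beta,\gamma$, extend one side past a vertex to produce an ideal vertex, and express the original triangle as a difference of two asymptotic triangles whose areas are known; a routine bookkeeping of the angles yields $\pi-\alpha-\beta-\gamma$.

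For the global formula on $(\Sigma',h')$, I would geodesically triangulate $\Sigma'$ so that boundary components (which are geodesic) appear as unions of triangle edges, obtaining $V$ vertices, $E$ edges, and $F$ triangles with $V=V_i+V_b$ (interior and boundary vertices) and $E=E_i+E_b$. Summing the per-triangle formula gives
\[\mathrm{Area}(\Sigma',dV_{h'})=\pi F-\sum(\text{interior angles}).\]
At each interior vertex the angles sum to $2\pi$, and at each boundary vertex they sum to $\pi$ (because the boundary is a flat geodesic there), so the angle sum equals $2\pi V_i+\pi V_b$.

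Finally I would combine this with the combinatorial identities $3F=2E_i+E_b$ (each interior edge is in two triangles, each boundary edge in one) and $V_b=E_b$ (each boundary circle is a closed polygon), which together with $\chi(\Sigma')=V-E+F$ give $F=2V_i+V_b-2\chi(\Sigma')$. Substituting yields
\[\mathrm{Area}(\Sigma',dV_{h'})=\pi(2V_i+V_b-2\chi(\Sigma'))-2\pi V_i-\pi V_b=-2\pi\chi(\Sigma'),\]
which is the desired formula (and specializes to the closed case when $V_b=E_b=0$). The main obstacle is really just the asymptotic-triangle integral: once one has $\pi-\alpha-\beta-\gamma$ for a single triangle, the global statement is purely a combinatorial cancellation, and the boundary bookkeeping is the only subtlety worth care.
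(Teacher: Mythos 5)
Your argument is correct and is the classical proof; the paper itself does not prove this statement but simply cites Ratcliffe (Theorems~3.5.5 and~9.3.1), whose proof follows essentially the same lines: the asymptotic-triangle integral in the upper half-plane, the compact triangle as a difference of asymptotic ones, and a triangulation count for the global formula. The only step you take on faith is the existence of a geodesic triangulation of $(\Sigma',h')$ whose edges include the boundary geodesics (and, for the local statement, that an embedded geodesic triangle on the surface lifts isometrically to $\mathbb{H}^2$); both are standard, so no genuine gap remains.
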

Ratcliffe~\cite{ratcliffe} is a reference for the Gauss--Bonnet theorem: the first statement is Theorem~3.5.5, and the second is Theorem~9.3.1.
We will also need some well-known facts about hyperbolic geodesics.
These appear in Ratcliffe~\cite{ratcliffe} in Chapter~9.6. 
\begin{theorem}
Each homotopy class of closed curve in $\Sigma$ contains a unique closed $h$--geodesic.
\end{theorem}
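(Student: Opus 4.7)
The plan is to pass to the universal cover and exploit the correspondence between free homotopy classes of loops and conjugacy classes in the deck group.

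First I would fix the universal cover $p\co \mathbb{H}^2\to\Sigma$ given by the hyperbolic metric $h$, so that $\pi_1(\Sigma)$ acts on $\mathbb{H}^2$ by orientation-preserving isometries, freely and cocompactly. Free homotopy classes of essential closed curves in $\Sigma$ correspond bijectively to nontrivial conjugacy classes in $\pi_1(\Sigma)$. Pick a representative $\gamma\in\pi_1(\Sigma)$ of the given class.

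Next I would argue that $\gamma$ acts as a hyperbolic isometry of $\mathbb{H}^2$. Since $\pi_1(\Sigma)$ acts freely and $\Sigma$ is closed, no nontrivial element can be elliptic (it would have a fixed point) or parabolic (a cocompact action on $\mathbb{H}^2$ has no parabolic deck transformations, as these would produce cusps in the quotient). Therefore $\gamma$ is hyperbolic and has a unique invariant axis $A_\gamma\subset\mathbb{H}^2$, a complete geodesic on which $\gamma$ acts by translation by its translation length $\ell>0$. The projection $p(A_\gamma)$ is then a closed $h$--geodesic in $\Sigma$, and a standard covering-space argument shows it lies in the free homotopy class corresponding to the conjugacy class of $\gamma$ (the loop based at $p(x_0)$ for any $x_0\in A_\gamma$, traversed once between $x_0$ and $\gamma x_0$, represents $\gamma$ up to conjugation).

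For uniqueness I would suppose $c$ is any closed $h$--geodesic in the same free homotopy class. Lift a subarc of $c$ traversing the loop once to a geodesic segment $\tilde c$ in $\mathbb{H}^2$ with endpoints $\tilde x_0$ and $g\tilde x_0$ for some $g$ conjugate to $\gamma$; after replacing $\gamma$ by a conjugate (which does not change the projected curve) we may assume $g=\gamma$. Extending $\tilde c$ to a complete geodesic gives a $\gamma$--invariant geodesic in $\mathbb{H}^2$, which by uniqueness of the axis of a hyperbolic isometry must equal $A_\gamma$. Hence $c=p(A_\gamma)$.

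The only real content is the classification of nontrivial deck transformations as hyperbolic, which is where I would expect to spend the most care; once that is in hand, existence and uniqueness both reduce to the standard fact that a hyperbolic isometry of $\mathbb{H}^2$ has a unique invariant geodesic axis.
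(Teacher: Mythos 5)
Your proof is correct, and it is the standard argument; the paper does not prove this statement itself but quotes it from Ratcliffe (Chapter 9.6), where essentially the same universal-cover/axis argument appears. The only step stated informally is the exclusion of parabolic deck transformations --- the clean justification is that for a cocompact discrete group of isometries of $\mathbb{H}^2$, every displacement function $x\mapsto d(x,\gamma x)$ attains a positive minimum (otherwise points moved arbitrarily little would, after translating into a compact fundamental domain, contradict proper discontinuity) --- but you correctly flag this as the one place requiring care, and the rest (existence and uniqueness of the axis, and the lift of a closed geodesic being a complete invariant geodesic) is routine.
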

\begin{theorem}
If a pair of homotopy classes of simple closed curves in $\Sigma$ have geometric intersection number $0$, then their geodesic representatives are disjoint.
If a pair of homotopy classes of simple closed curves in $\Sigma$ have geometric intersection number $1$, then their geodesic representatives have a single transverse intersection. 
\end{theorem}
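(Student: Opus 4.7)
The plan is to reduce both claims to the single statement that geodesic representatives $\alpha^*$ and $\beta^*$ of two simple closed curves realize their geometric intersection number, and then to obtain this minimality by lifting to the universal cover $\widetilde{\Sigma} = \mathbb{H}^2$ and invoking the uniqueness of geodesics between two points in $\mathbb{H}^2$.

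First I would verify that $\alpha^*$ and $\beta^*$ meet transversely (assuming they are distinct as curves). If they were tangent at a point $p$, then any lifts through a common preimage $\tilde p \in \mathbb{H}^2$ would be two complete geodesics in $\mathbb{H}^2$ sharing a point and a tangent line; by uniqueness for the geodesic initial-value problem in $\mathbb{H}^2$ they would coincide, forcing $\alpha^* = \beta^*$ in $\Sigma$. So we may assume all intersection points are transverse, and in particular $\alpha^* \cap \beta^*$ is finite.

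Second I would establish the bigon criterion: if $|\alpha^* \cap \beta^*|$ exceeds the geometric intersection number of the homotopy classes, then $\alpha^*$ and $\beta^*$ co-bound an embedded bigon $B \subset \Sigma$. The standard way to prove this is to note that excess intersections yield, on passing to the universal cover, two lifts $\tilde\alpha, \tilde\beta$ of $\alpha^*, \beta^*$ whose endpoints on $\partial \mathbb{H}^2$ do not link; two transverse arcs in a disk with unlinked endpoints must contain an innermost bigon, which projects to an embedded bigon in $\Sigma$.

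Third, and this is the crux, I would show $\alpha^*$ and $\beta^*$ bound no bigon. Suppose such a bigon $B$ existed, with vertices $p,q$ and geodesic sides $\gamma_1 \subset \alpha^*$, $\gamma_2 \subset \beta^*$. Lift $B$ to $\widetilde B \subset \mathbb{H}^2$; the two boundary arcs lift to geodesic arcs in $\mathbb{H}^2$ sharing the endpoints $\tilde p, \tilde q$. But in $\mathbb{H}^2$ there is a unique geodesic segment between any two points, so $\gamma_1$ and $\gamma_2$ must coincide, contradicting the existence of a nondegenerate bigon. Hence $|\alpha^* \cap \beta^*|$ equals the geometric intersection number, giving $0$ or $1$ as claimed, and transversality from step one handles the ``single transverse intersection'' assertion.

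The main obstacle is the bigon criterion in step two; its proof requires a careful topological argument about arcs in a disk (or equivalently the innermost bigon / disk-reduction argument). Everything else is essentially a direct consequence of the geometry of $\mathbb{H}^2$, in particular the uniqueness of geodesic segments between two points.
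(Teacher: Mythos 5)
The paper does not prove this theorem at all --- it is quoted as a known fact with a pointer to Ratcliffe, Chapter~9.6 --- so there is no in-paper argument to compare against; judged on its own, your proof is correct and is the standard one (it is essentially Farb--Margalit's proof that distinct simple closed geodesics are in minimal position). Two remarks. First, you correctly identify the bigon criterion as the heavy step, but for this particular application you can bypass it almost entirely: since two distinct complete geodesics in $\mathbb{H}^2$ meet in at most one point, no pair of lifts of $\alpha^*$ and $\beta^*$ can intersect twice, and whether a given pair of lifts intersects is governed by the linking of their endpoint pairs on $\partial\mathbb{H}^2$, which depends only on the homotopy classes; counting crossing pairs of lifts modulo the deck group then shows directly that $|\alpha^*\cap\beta^*|$ is a homotopy-invariant minimum, with no need to produce an embedded bigon downstairs (the step ``the innermost bigon upstairs projects to an \emph{embedded} bigon in $\Sigma$'' is exactly the part of the bigon criterion that requires the most care, and your sketch elides it). Second, your tangency argument in step one should be phrased slightly more carefully: coincidence of the two lifts shows the \emph{images} of $\alpha^*$ and $\beta^*$ coincide, which for simple closed geodesics forces the two unoriented free homotopy classes to agree --- a contradiction provided one adds the (implicit) hypothesis that the two classes are distinct, without which the intersection-number-zero case is vacuous anyway. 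Neither point is a genuine gap; the proof is sound.
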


\subsection{A homotopy-theoretic argument}\label{ss:homotopyprelim}
We need the following lemma to prove the well-definedness of $\Fljac{J}$.  
This is a refinement of a standard argument.

\begin{lemma}\label{le:relhtpy}
Suppose $Z$ is an aspherical space with a basepoint $*$, $Y$ is a connected CW-complex with a (different) basepoint $*$, $A\subset Y$ is a closed, connected subcomplex with $*\in A$, and $f_0,f_1\co (Y,*)\to (Z,*)$ are two continuous functions such that $f_0|_A=f_1|_A$ and $(f_0)_*=(f_1)_*\co \pi_1(Y,*)\to\pi_1(Z,*)$.
Then there is a homotopy $F\co Y\times [0,1]\to Z$ from $f_0$ to $f_1$ relative to~$A$.
\end{lemma}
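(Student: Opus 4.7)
The plan is to recast the conclusion as a relative extension problem and solve it by obstruction theory after lifting to the universal cover of $Z$. Set $B := (Y\times\{0,1\})\cup(A\times[0,1])$, which is a closed CW-subcomplex of $Y\times[0,1]$ because $A$ is a closed subcomplex of $Y$. Define $g\co B\to Z$ by $g(y,0):=f_0(y)$, $g(y,1):=f_1(y)$, and $g(a,t):=f_0(a)=f_1(a)$ for $a\in A$; the assumption $f_0|_A=f_1|_A$ makes this continuous. A continuous extension of $g$ to $Y\times[0,1]$ is exactly a homotopy $F$ from $f_0$ to $f_1$ relative to $A$.

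Next, pass to the universal covers $p\co\tilde Z\to Z$ and $q\co\tilde Y\to Y$, with basepoint lifts denoted $\tilde *$. Let $\tilde f_0,\tilde f_1\co\tilde Y\to\tilde Z$ be the unique basepoint-preserving lifts of $f_0$ and $f_1$. Because $(f_0)_*=(f_1)_*$ on $\pi_1$, both are equivariant with respect to the same $\pi_1(Y,*)$-action on $\tilde Z$. Since $A$ is connected, it lifts to a connected component $\tilde A$ of $q^{-1}(A)$ containing $\tilde *$; the restrictions $\tilde f_0|_{\tilde A}$ and $\tilde f_1|_{\tilde A}$ are lifts of the common map $f_0|_A=f_1|_A$ agreeing at $\tilde *$, hence agree on all of $\tilde A$. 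Equivariance then extends this agreement to every translate of $\tilde A$, producing a well-defined equivariant lift $\tilde g\co\tilde B\to\tilde Z$ of $g$, where $\tilde B:=(\tilde Y\times\{0,1\})\cup(q^{-1}(A)\times[0,1])$.

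Because $Z$ is aspherical, $\tilde Z$ is contractible, so $\pi_n(\tilde Z)=0$ for all $n\geq 0$. Give $\tilde Y\times[0,1]$ the CW structure lifted from that of $Y\times[0,1]$, making $\tilde B$ a subcomplex on which $\pi_1(Y,*)$ permutes cells. Extend $\tilde g$ one skeleton at a time by choosing a representative cell in each $\pi_1(Y,*)$-orbit outside $\tilde B$: the obstruction for a cell of dimension $n$ lies in $\pi_{n-1}(\tilde Z)=0$, so each extension exists, and we propagate it over the rest of the orbit using the group action. The resulting equivariant $\tilde F\co\tilde Y\times[0,1]\to\tilde Z$ descends to $F\co Y\times[0,1]\to Z$, which by construction satisfies $F(\cdot,0)=f_0$, $F(\cdot,1)=f_1$, and is constant in $t$ on $A$. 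The only real subtlety is keeping the extension $\pi_1(Y,*)$-equivariant throughout the skeletal construction; this succeeds because the data on $\tilde B$ is already equivariant and because contractibility of $\tilde Z$ kills every obstruction, so no compatibility issue ever arises between different orbits.
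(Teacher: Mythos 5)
Your proof is correct, but it takes a genuinely different route from the paper's. The paper constructs the homotopy directly on $Y$, skeleton by skeleton: for cells of dimension at least two the pasted sphere into $Z$ is null-homotopic by asphericity, and the real content is on the $1$--skeleton, where a maximal tree $T$ with $T\cap A$ connected is chosen, the homotopy is written down explicitly on $A\cup T$ via a retraction and a concatenation, and the hypothesis $(f_0)_*=(f_1)_*$ is invoked edge by edge to show that the boundary loop one must fill in over each remaining edge is null-homotopic. You instead recast the problem as extending $g\co (Y\times\{0,1\})\cup(A\times[0,1])\to Z$ over $Y\times[0,1]$ and solve it upstairs: in the universal covers the target $\widetilde Z$ is weakly contractible, so every cell-by-cell obstruction vanishes uniformly in all dimensions, and the hypothesis $(f_0)_*=(f_1)_*$ is absorbed into the fact that $\tilde f_0$ and $\tilde f_1$ are equivariant for the \emph{same} deck action --- which is exactly what makes $\tilde g$ well defined on every component of $q^{-1}(A)$ (each such component is a deck translate of $\tilde A$ because $A$, being a connected CW complex, is path-connected) and what allows the equivariant extension to descend. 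Your version is cleaner and makes transparent that the lemma is the standard rigidity of maps into a $K(\pi,1)$ rel a subcomplex; the paper's version is more elementary in that it never invokes covering spaces, and in particular does not require $Z$ to admit a universal cover, which for a bare ``aspherical space'' is a mild extra hypothesis your argument does need (harmless here, since in the application $Z$ is the Jacobian torus). The only small points worth making explicit are the path-connectedness of $A$ used above and the observation that a nontrivial deck transformation moves every lifted cell, so the orbit-by-orbit extension never has to be checked for self-consistency on a single cell; both are immediate.
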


\begin{proof}
Let $Y^k$ denote the $k$--skeleton of $Y$.
If we have constructed $F$ on $Y^{k-1}\cup A$ for $k>1$ and $C$ is a $k$--cell in $Y\bs A$, then $F|_{(\partial C\times [0,1])}$, $f_0|_C$ and $f_1|_C$ paste together to form a map $S^k\to Z$.
Since $Z$ is aspherical,  $F$ would then extend to $C$.
So it is enough to construct $F$ on $Y^1\cup A$.

Since $A$ is connected, we can build a maximal tree $B$ for $Y^1$ such that $B\cap A$ is connected.
Then there is a retraction homotopy $H\co (A\cup B)\times [0,1]\to A\cup B$ relative to $A$, meaning $H_0$ is the identity and $H_1(A\cup B)\subset A$.
Let $F|_{A\cup B}\co (A\cup B)\times [0,1]\to Z$ be the concatenation of $f_0\circ H$ followed by $f_1\circ H$ in reverse, which is a homotopy from $f_0|_{A\cup B}$ to $f_1|_{A\cup B}$ relative to $A$.
This concatenation is well defined since $f_0|_A=f_1|_A$.

Now suppose $\gamma\co[0,1]\to Y^1$ parametrizes an edge in $Y^1\bs(A\cup B)$.
We have already defined $F$ on $\gamma(0),\gamma(1)\in B$.
Pick paths $\delta_i$ in $A\cup B$ from $*$ to $\gamma(i)$ for $i=0,1$.
Since $f_0$ and $f_1$ induce the same map on fundamental groups, the concatenation
$(f_0)_*(\delta_1^{-1}\gamma\delta_0)^{-1}(f_1)_*(\delta_1^{-1}\gamma\delta_0)$
 is trivial in $\pi_1(Z,*)$.
Define the path $\eta_i(t)=H(\gamma(i),t)$ for $i=0,1$.
The loops $(f_1)_*\delta_i^{-1} \eta_i (f_0)_*\delta_i$ in are trivial  $\pi_1(Z,*)$ for $i=0,1$.
Then 
\[(f_1)_*\gamma \eta_1 (f_0)_*\gamma^{-1}\eta_0\]
is trivial in $\pi_1(Z,f_1(\gamma(1)))$.
This is exactly what we need to extend $F$ to a homotopy from $f_0$ to $f_1$ on the edge $\gamma$.
Repeating this for each edge, we extend $F$ to $Y^1$.
\end{proof}
\section{Constructions of maps on based symplectomorphisms}
\subsection{The area-difference map}
\label{ss:flsec}

First we give a more careful version of the definition of $\Flsec{s}$ from the introduction.
Fix a homomorphism $s\co H_1(\Sigma)\to Z_1(\Sigma\bs\{*\})$ that is a section to the projection $Z_1(\Sigma\bs\{*\})\to H_1(\Sigma\bs\{*\})\cong H_1(\Sigma)$.
\begin{definition}
For $\phi\in \Symps$ and $[c]\in H_1(\Sigma)$, pick a chain $C\in C_2(\Sigma\bs\{*\})$ bounding $s(\phi^{-1}_*[c])-\phi^{-1}_*s([c])$.
 The \emph{section-based area difference map} based on $s$,
\[\Flsec{s}\co \Symps\to H^1(\Sigma;\R)\]  
is defined by $\Flsec{s}(\phi)([c])=\int_C\omega_\Sigma$.
\end{definition}
We denote the support of a map or a chain by $\supp$.
Without demanding $*\notin\supp C$, this $\Flsec{s}$ would only be defined modulo $\mathrm{Area}(\Sigma,\omega_\Sigma)$; this is why we consider this map only on $\Symps$.
As mentioned in the introduction, this is a variation on McDuff's definition of an extended flux map in Remark~4.7 of~\cite{mcd}.
McDuff's definition also uses a section $s$ and a term like $\Flsec{s}$, but instead of using the basepoint to choose the chain $C$ with $\partial C=\phi_*s([c])-s(\phi_*[c])$, she chooses $C$ arbitrarily and uses a Chern number as a correction factor.

The goal of this subsection is to show the following.
\begin{proposition}\label{pr:flsecwelldef}
For a fixed choice of $s$, the map $\Flsec{s}$ is a well-defined crossed homomorphism extending $\Fl|_{\Sympso}$.
\end{proposition}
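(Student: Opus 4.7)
The plan is to verify, in turn, that $\Flsec{s}(\phi)([c])$ is well-defined, that $\Flsec{s}(\phi)$ lies in $\Hom(H_1(\Sigma),\R)$, that $\Flsec{s}$ satisfies the crossed-homomorphism identity, and that it agrees with $\Fl$ on $\Sympso$. For well-definedness, observe that both $s(\phi^{-1}_*[c])$ and $\phi^{-1}_*s([c])$ are $1$--cycles in $\Sigma\bs\{*\}$ representing the class $\phi^{-1}_*[c]$ under the isomorphism $H_1(\Sigma\bs\{*\})\cong H_1(\Sigma)$. Hence their difference bounds some $C\in C_2(\Sigma\bs\{*\})$, and any two such bounding chains differ by a $2$--cycle supported in $\Sigma\bs\{*\}$. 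Since $\Sigma\bs\{*\}$ is an open surface, $H_2(\Sigma\bs\{*\};\R)=0$, so $\int_C\omega_\Sigma$ depends only on the boundary $\partial C$ (equivalently, $\omega_\Sigma$ is exact on $\Sigma\bs\{*\}$, and we apply Stokes' theorem). Linearity in $[c]$ is immediate from the linearity of $s$ and $\phi^{-1}_*$: if $C_i$ bounds the difference for $[c_i]$, then $C_1+C_2$ bounds the required difference for $[c_1]+[c_2]$.

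For the crossed-homomorphism identity, let $C_\phi\in C_2(\Sigma\bs\{*\})$ bound $s(\phi^{-1}_*[c])-\phi^{-1}_*s([c])$ and let $C_\psi\in C_2(\Sigma\bs\{*\})$ bound $s(\psi^{-1}_*\phi^{-1}_*[c])-\psi^{-1}_*s(\phi^{-1}_*[c])$. Because $\psi$ fixes $*$, the chain $\psi^{-1}_*C_\phi$ still lies in $\Sigma\bs\{*\}$, and a short telescoping computation shows
\[\partial(C_\psi+\psi^{-1}_*C_\phi)=s((\phi\psi)^{-1}_*[c])-(\phi\psi)^{-1}_*s([c]),\]
so $C_\psi+\psi^{-1}_*C_\phi$ is a valid bounding chain for computing $\Flsec{s}(\phi\psi)([c])$. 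Since $\psi\in\Symp$ preserves $\omega_\Sigma$, we have $\int_{\psi^{-1}_*C_\phi}\omega_\Sigma=\int_{C_\phi}\omega_\Sigma$, and thus
\[\Flsec{s}(\phi\psi)([c])=\Flsec{s}(\psi)(\phi^{-1}_*[c])+\Flsec{s}(\phi)([c])=(\phi\cdot\Flsec{s}(\psi))([c])+\Flsec{s}(\phi)([c]),\]
using the $\Symp$--action on $H^1(\Sigma;\R)$ recalled in the introduction.

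Finally, to check that $\Flsec{s}$ extends $\Fl$ on $\Sympso$, fix $\phi\in\Sympso$ and pick a smooth isotopy $\phi_t$ from the identity to $\phi$ that fixes $*$. Since $\phi_*$ acts trivially on $H_1(\Sigma)$, the required boundary reduces to $s([c])-\phi^{-1}_*s([c])$. Dragging the cycle $\phi^{-1}_*s([c])$ along $\phi_t$ produces a $2$--chain $L$ whose boundary is $s([c])-\phi^{-1}_*s([c])$, and $L$ stays in $\Sigma\bs\{*\}$ because $\phi_t(*)=*$ for all $t$. By the definition of flux recalled in the introduction, $\int_L\omega_\Sigma=\Fl(\phi)([c])$, so $\Flsec{s}(\phi)([c])=\Fl(\phi)([c])$. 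The main point to monitor throughout is that every chain in sight (the bounding $C$, the product $C_\psi+\psi^{-1}_*C_\phi$, and the homotopy track $L$) can be arranged inside $\Sigma\bs\{*\}$; this is precisely what kills the ambiguity modulo $\mathrm{Area}(\Sigma,\omega_\Sigma)$ and is the reason for restricting the domain to $\Symps$.
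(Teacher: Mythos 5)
Your proof is correct and follows essentially the same route as the paper's: the same four verifications (well-definedness via $H_2(\Sigma\bs\{*\})=0$ and Stokes, linearity, the telescoping chain identity for the crossed-homomorphism property, and dragging a cycle along an isotopy in $\Sympso$ to recover $\Fl$). The only cosmetic differences are that you verify the cocycle identity for $\phi\psi$ rather than $\psi\phi$ and drag $\phi^{-1}_*s([c])$ directly instead of dragging $s([c])$ and then applying $\phi^{-1}_*$; just make sure the isotopy in the last step is taken inside $\Symps$ (through basepoint-preserving symplectomorphisms), as the paper does.
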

The proof of this proposition is broken up into several claims.
\begin{claim}
Let $\phi\in\Symps$ and let $[c]\in H_1(\Sigma)$.
Then $\Flsec{s}(\phi)([c])\in\R$ is well defined.
\end{claim}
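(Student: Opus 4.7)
The claim requires two things: (i) that a chain $C \in C_2(\Sigma \setminus \{*\})$ with the prescribed boundary exists, and (ii) that $\int_C \omega_\Sigma$ is independent of the choice of such $C$. My plan is to reduce both points to standard facts about the homology of the punctured surface.

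For existence, I would first verify that the inclusion $\Sigma \setminus \{*\} \hookrightarrow \Sigma$ induces an isomorphism on $H_1$. This is standard: $\Sigma \setminus \{*\}$ deformation retracts onto the $1$-skeleton of a CW-structure on $\Sigma$ in which $*$ is the unique $2$-cell center, giving both groups as $\Z^{2g}$ with the inclusion an isomorphism. Next I would note that both $s(\phi^{-1}_*[c])$ and $\phi^{-1}_* s([c])$ lie in $Z_1(\Sigma \setminus \{*\})$: the former by the definition of $s$, the latter because $\phi$ fixes the basepoint so $\phi^{-1}_*$ preserves $C_*(\Sigma \setminus \{*\})$. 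Both cycles represent the class $\phi^{-1}_*[c] \in H_1(\Sigma)$, and by the isomorphism above they also represent the same class in $H_1(\Sigma \setminus \{*\})$. Hence their difference bounds in $C_2(\Sigma \setminus \{*\})$, so a chain $C$ exists.

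For well-definedness of $\int_C \omega_\Sigma$, let $C_1, C_2$ be two choices with the same boundary. Then $C_1 - C_2 \in Z_2(\Sigma \setminus \{*\})$. The key observation is that $\Sigma \setminus \{*\}$ is a non-compact (indeed non-closed) $2$-manifold, so $H_2(\Sigma \setminus \{*\}; \R) = 0$; equivalently, $H^2_{dR}(\Sigma \setminus \{*\}) = 0$. Therefore $C_1 - C_2 = \partial D$ for some $D \in C_3(\Sigma \setminus \{*\})$, and Stokes' theorem together with $d\omega_\Sigma = 0$ yields $\int_{C_1} \omega_\Sigma - \int_{C_2} \omega_\Sigma = \int_D d\omega_\Sigma = 0$.

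I do not expect any genuine obstacle in this proof; the entire content is the pair of vanishings $H_1(\Sigma, \Sigma \setminus \{*\}) = 0$ (for existence) and $H_2(\Sigma \setminus \{*\}) = 0$ (for uniqueness). The only thing worth emphasizing is \emph{why} the puncture at $*$ matters: without requiring $\supp C \subset \Sigma \setminus \{*\}$, different choices of $C$ could differ by a multiple of the fundamental class $[\Sigma]$, and the integral would only be well defined modulo $\mathrm{Area}(\Sigma, \omega_\Sigma) = g$. Excluding $*$ kills $H_2$ and removes this ambiguity, which is precisely the role of the basepoint in the construction.
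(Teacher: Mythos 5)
Your proof is correct and follows essentially the same route as the paper: the heart of the matter is that $H_2(\Sigma\setminus\{*\})=0$, so two choices of $C$ differ by the boundary of a $3$--chain and Stokes's theorem with $d\omega_\Sigma=0$ gives independence of the choice. Your additional verification that a bounding chain $C$ exists (via $H_1(\Sigma\setminus\{*\})\cong H_1(\Sigma)$) is a point the paper leaves implicit, and your closing remark about why excluding $*$ is needed matches the paper's own comment that without it the map would only be defined modulo $\mathrm{Area}(\Sigma,\omega_\Sigma)$.
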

\begin{proof}
Since $H_2(\Sigma\bs\{*\})=0$, there is a chain in $C_3(\Sigma\bs\{*\})$ bounding the difference of any two chains in $C_2(\Sigma\bs\{*\})$ that bound $s(\phi_*^{-1}[c])-\phi_*^{-1}s([c])$.
The claim then follows from Stokes's theorem and the fact that $\omega_\Sigma$ is closed.
\end{proof}

\begin{claim}
Let $\phi\in\Symps$.
Then $\Flsec{s}(\phi)\in H^1(\Sigma;\R)$.
\end{claim}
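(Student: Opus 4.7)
The plan is to verify that $\Flsec{s}(\phi)\colon H_1(\Sigma)\to\R$ is a group homomorphism; by the universal coefficient theorem (as noted in Section~\ref{se:prelim}) any such homomorphism represents a class in $H^1(\Sigma;\R)$, which is the desired conclusion. The previous claim has already shown that $\Flsec{s}(\phi)([c])$ is a well-defined real number, independent of the choice of bounding $2$--chain, so only additivity in $[c]$ remains to be checked.

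Given $[c_1],[c_2]\in H_1(\Sigma)$, I would pick chains $C_1,C_2\in C_2(\Sigma\bs\{*\})$ with $\partial C_i=s(\phi_*^{-1}[c_i])-\phi_*^{-1}s([c_i])$; these exist because $\phi\in\Symps$ fixes $*$, so the linear map $\phi_*^{-1}$ preserves $C_*(\Sigma\bs\{*\})$, and both $s(\phi_*^{-1}[c_i])$ and $\phi_*^{-1}s([c_i])$ represent the same class in $H_1(\Sigma\bs\{*\})\cong H_1(\Sigma)$. Since $s$ is a homomorphism and $\phi_*^{-1}$ is linear on chains, the sum $C_1+C_2$ satisfies
\[\partial(C_1+C_2)=s\bigl(\phi_*^{-1}([c_1]+[c_2])\bigr)-\phi_*^{-1}s([c_1]+[c_2]),\]
so $C_1+C_2$ is a legitimate choice of bounding chain for computing $\Flsec{s}(\phi)([c_1]+[c_2])$. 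By the previous claim the result is independent of this choice, and linearity of integration then gives
\[\Flsec{s}(\phi)([c_1]+[c_2])=\int_{C_1+C_2}\omega_\Sigma=\int_{C_1}\omega_\Sigma+\int_{C_2}\omega_\Sigma=\Flsec{s}(\phi)([c_1])+\Flsec{s}(\phi)([c_2]).\]

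There is no real obstacle here: all the subtlety (choosing $C$ to avoid $*$, and independence of that choice) was absorbed into the previous claim. The only point worth flagging explicitly in the write-up is that $\phi_*^{-1}$ preserves the subcomplex $C_*(\Sigma\bs\{*\})$ because $\phi$ is basepoint-preserving — this is what lets the whole calculation take place away from $*$ and hence be computed unambiguously rather than only modulo $\mathrm{Area}(\Sigma,\omega_\Sigma)$.
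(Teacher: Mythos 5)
Your argument is correct and is exactly the paper's (the paper's proof is the one-line observation that $s$, the $\Symps$--action, and $\partial$ are all linear, which you have simply unpacked). Your explicit remark that $\phi_*^{-1}$ preserves $C_*(\Sigma\bs\{*\})$ because $\phi$ fixes $*$ is a reasonable detail to spell out, but the approach is the same.
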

\begin{proof}
The claim follows immediately from the fact that the section $s$, the action of $\Symps$, and the boundary map $\partial$ are all linear.
\end{proof}

\begin{claim}
The map $\Flsec{s}\co \Symps\to H^1(\Sigma;\R)$ is a crossed homomorphism.
\end{claim}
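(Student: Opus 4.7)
The plan is to verify the cocycle identity
\[\Flsec{s}(\phi\psi)([c]) \;=\; \Flsec{s}(\psi)(\phi^{-1}_*[c]) + \Flsec{s}(\phi)([c])\]
for $\phi,\psi\in\Symps$ and $[c]\in H_1(\Sigma)$ by assembling chains witnessing the two terms on the right into a single chain that computes the left-hand side.

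First, pick chains $C_\phi, C_\psi' \in C_2(\Sigma\bs\{*\})$ with
\[\partial C_\phi = s(\phi^{-1}_*[c]) - \phi^{-1}_*s([c]), \qquad \partial C_\psi' = s(\psi^{-1}_*\phi^{-1}_*[c]) - \psi^{-1}_*s(\phi^{-1}_*[c]),\]
so that $\int_{C_\phi}\omega_\Sigma = \Flsec{s}(\phi)([c])$ and $\int_{C_\psi'}\omega_\Sigma = \Flsec{s}(\psi)(\phi^{-1}_*[c])$. Since $\psi$ fixes $*$, the pushforward $\psi^{-1}_*C_\phi$ still lies in $C_2(\Sigma\bs\{*\})$, and because $\psi^{-1}_*$ commutes with $\partial$, the middle two terms in the boundary of $C_\psi' + \psi^{-1}_*C_\phi$ cancel, leaving $s((\phi\psi)^{-1}_*[c]) - (\phi\psi)^{-1}_*s([c])$. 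So $C_\psi' + \psi^{-1}_*C_\phi$ is a legitimate choice of chain for computing $\Flsec{s}(\phi\psi)([c])$; by the well-definedness established in the previous claim,
\[\Flsec{s}(\phi\psi)([c]) \;=\; \int_{C_\psi'}\omega_\Sigma + \int_{\psi^{-1}_*C_\phi}\omega_\Sigma.\]

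Since $\psi$ is a symplectomorphism, $\int_{\psi^{-1}_*C_\phi}\omega_\Sigma = \int_{C_\phi}(\psi^{-1})^*\omega_\Sigma = \int_{C_\phi}\omega_\Sigma = \Flsec{s}(\phi)([c])$, and the first integral on the right equals $(\phi\cdot\Flsec{s}(\psi))([c])$ by the definition of the $\Symps$--action on $H^1(\Sigma;\R)$. Summing yields the cocycle identity. The only subtlety, and the one point that actually uses the basepoint hypothesis, is ensuring that $\psi^{-1}_*C_\phi$ avoids $*$ so the previous well-definedness claim applies; this is automatic from $\psi(*)=*$, so there is no real obstacle beyond routine bookkeeping.
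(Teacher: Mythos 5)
Your proof is correct and follows essentially the same argument as the paper: the paper likewise picks a bounding chain for each factor, pushes one forward by the inverse of the other symplectomorphism so the middle boundary terms cancel, and uses area-invariance under symplectomorphisms together with the already-established well-definedness. The only difference is cosmetic (the paper writes the product as $\psi\phi$ and pushes forward by $\phi^{-1}_*$ rather than $\psi^{-1}_*$).
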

\begin{proof}
Let $\phi,\psi\in\Symps$ and $[c]\in H_1(\Sigma)$.
Let $D_1, D_2\in C_2(\Sigma\bs\{*\})$ with $\partial D_1=s(\psi^{-1}_*[c])-\psi^{-1}_*s([c])$ and $\partial D_2=s(\phi^{-1}_*\psi^{-1}_*[c])-\phi^{-1}_*s(\psi^{-1}_*[c])$.
Note that $\partial(\phi^{-1}_*D_1+D_2)=s(\phi^{-1}_*\psi^{-1}_*[c])-\phi^{-1}_*\psi^{-1}_*s([c])$.
Since $\phi$ is a symplectomorphism, we have $\Flsec{s}(\psi)([c])=\int_{\phi^{-1}_*D_1}\omega_\Sigma$.
By definition, we have $\Flsec{s}(\phi)(\psi^{-1}_*[c])=\int_{D_2}\omega_\Sigma$, and $\Flsec{s}(\psi\phi)([c])=\int_{\phi^{-1}_*D_1+D_2}\omega_\Sigma$. 
Since $[c]$ was arbitrary, $\Flsec{s}(\psi\phi)=\psi\cdot\Flsec{s}(\phi)+\Flsec{s}(\psi)$.
\end{proof}

\begin{claim}
The map $\Flsec{s}$ restricted to $\Sympso$ is $\Fl|_{\Sympso}$.
\end{claim}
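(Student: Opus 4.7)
The plan is: given $\phi \in \Sympso$ and $[c] \in H_1(\Sigma)$, exhibit a specific chain $C \in C_2(\Sigma \bs \{*\})$ with $\partial C = s(\phi^{-1}_*[c]) - \phi^{-1}_* s([c])$ whose integral against $\omega_\Sigma$ equals $\Fl(\phi)([c])$. By the well-definedness claim just proved, this will compute $\Flsec{s}(\phi)([c])$ and thereby establish the equality.

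Because $\Sympso \subset \Sympo$ acts trivially on $H_1(\Sigma)$, the target boundary simplifies to $s([c]) - \phi^{-1}_* s([c])$. Since $\Sympso$ is by definition the path-component of the identity in $\Symps$, I choose a smooth isotopy $\phi_t$ from $\mathrm{id}$ to $\phi$ lying entirely in $\Sympso$, so in particular $\phi_t(*) = *$ for every $t$. Setting $\psi_t = \phi_t^{-1}$ and representing $s([c])$ by a smooth $\gamma \co S^1 \to \Sigma \bs \{*\}$ (extending by linearity when $s([c])$ has several components), the map $K' \co S^1 \times [0,1] \to \Sigma$ defined by $K'(x,t) = \psi_t(\gamma(x))$ takes values in $\Sigma \bs \{*\}$, because each $\psi_t$ fixes $*$ and $\gamma$ avoids $*$. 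The corresponding $2$--chain $K'_*[S^1 \times [0,1]]$, which I continue to denote $K'$, lies in $C_2(\Sigma \bs \{*\})$ and has boundary $\phi^{-1}_* s([c]) - s([c])$.

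By the very definition of the flux homomorphism applied to $\phi^{-1}$ via the isotopy $\psi_t$, $\int_{K'} \omega_\Sigma = \Fl(\phi^{-1})([c])$, and since $\Fl$ is a homomorphism on $\Sympo$, this equals $-\Fl(\phi)([c])$. Setting $C = -K'$ then gives $\partial C = s([c]) - \phi^{-1}_* s([c])$ and $\int_C \omega_\Sigma = \Fl(\phi)([c])$, finishing the argument. There is no real obstacle here: the only point requiring attention is that the isotopy realizing $\phi$ inside $\Sympso$ can be taken to fix $*$ throughout, which is what keeps $K'$ out of the basepoint and allows it to serve as a legitimate choice of $C$ in $C_2(\Sigma \bs \{*\})$.
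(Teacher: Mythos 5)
Your argument is correct and is essentially the paper's proof: both drag the reference cycle $s([c])$ along a basepoint-fixing isotopy to produce a chain in $C_2(\Sigma\bs\{*\})$ whose area is the flux and whose boundary matches the one in the definition of $\Flsec{s}$. The only cosmetic difference is that you drag along the inverse isotopy $\phi_t^{-1}$ and negate, whereas the paper drags along $\phi_t$ and then applies $\phi^{-1}_*$, using that $\phi$ is symplectic to preserve the area.
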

\begin{proof}
Let $\phi\in\Sympso$ and let $[c]\in H_1(\Sigma)$.
Pick a homotopy $\phi_t$ from the identity to $\phi$, through maps in $\Symps$.
Obtain a chain $C\in C_2(\Sigma\bs\{*\})$ by dragging $s([c])$ along $\phi_t$.
Because $\phi$ is homotopic to the identity, we have $s(\phi^{-1}_*[c])=s([c])$.
So $\partial C=\phi_*s(\phi^{-1}_*[c])-s([c])$.
Since $\phi$ is symplectic, $\int_{\phi^{-1}_*C}\omega_\Sigma=\int_C\omega_\Sigma$.
Then $\int_{\phi^{-1}_*C}\omega_\Sigma$ fits the definition of $\Flsec{s}(\phi)([c])$,
and is equal to $\int_C\omega_\Sigma$, which fits the definition of $\Fl(\phi)([c])$.
\end{proof}
The previous claim completes the proof of Proposition~\ref{pr:flsecwelldef}.

\begin{proposition}\label{pr:seccobound}
The cohomology class 
\[[\Flsec{s}]\in H^1(\Symps;H^1(\Sigma;\R))\]
 does not depend on $s$.
Consequently, $\Flsec{s}|_{\ISymps}$ does not depend on $s$.
\end{proposition}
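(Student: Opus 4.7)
\begin{proofof}{Proposition~\ref{pr:seccobound} (sketch)}
The plan is to show that for any two choices $s_1, s_2$ of sections, the difference $\Flsec{s_1}-\Flsec{s_2}$ is a principal crossed homomorphism, hence a coboundary in $Z^1(\Symps;H^1(\Sigma;\R))$. Because $\Symps$ acts trivially on $H^1(\Sigma;\R)$ through $\ISymps$, every principal crossed homomorphism vanishes there, which gives the ``consequently'' statement.

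First I would build the ``correction'' cocycle. Since $\Sigma\bs\{*\}$ deformation retracts onto a wedge of $2g$ circles, the inclusion induces an isomorphism $H_1(\Sigma\bs\{*\})\cong H_1(\Sigma)$ and $H_2(\Sigma\bs\{*\})=0$. Consequently, for each $[c]\in H_1(\Sigma)$ the $1$--cycle $s_1([c])-s_2([c])\in Z_1(\Sigma\bs\{*\})$ is a boundary, and by picking $2$--chains on a $\Z$--basis of $H_1(\Sigma)$ and extending linearly, one obtains a homomorphism $E\co H_1(\Sigma)\to C_2(\Sigma\bs\{*\})$ with $\partial\circ E=s_1-s_2$. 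Define $v\in H^1(\Sigma;\R)=\Hom(H_1(\Sigma),\R)$ by $v([c])=\int_{E([c])}\omega_\Sigma$; this is well defined because any two choices of $E([c])$ differ by a $2$--cycle in $\Sigma\bs\{*\}$, hence (by $H_2(\Sigma\bs\{*\})=0$) a boundary in $C_3$, and $\omega_\Sigma$ is closed.

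Next I would compute the difference. Fix $\phi\in\Symps$ and $[c]\in H_1(\Sigma)$, and let $C_i\in C_2(\Sigma\bs\{*\})$ be the chains with $\partial C_i=s_i(\phi^{-1}_*[c])-\phi^{-1}_*s_i([c])$ used in computing $\Flsec{s_i}(\phi)([c])$ for $i=1,2$. A direct boundary calculation shows that
\[
C_1-C_2-E(\phi^{-1}_*[c])+\phi^{-1}_*E([c])
\]
is a $2$--cycle in $\Sigma\bs\{*\}$, hence a boundary in $C_3(\Sigma\bs\{*\})$. Integrating $\omega_\Sigma$ over it yields $0$ by Stokes's theorem, and since $\phi$ is a symplectomorphism we have $\int_{\phi^{-1}_*E([c])}\omega_\Sigma=v([c])$. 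Rearranging gives
\[
(\Flsec{s_1}-\Flsec{s_2})(\phi)([c]) = v(\phi^{-1}_*[c])-v([c]) = (\phi\cdot v)([c])-v([c]),
\]
so $\Flsec{s_1}-\Flsec{s_2}$ is the principal crossed homomorphism $\phi\mapsto \phi\cdot v-v$.

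The main technical point is simply making sure of the bookkeeping in the boundary calculation in the previous paragraph and confirming $H_2(\Sigma\bs\{*\})=0$ to eliminate ambiguities; there is no deeper obstacle. The second conclusion then follows: if $\phi\in\ISymps$ then $\phi\cdot v=v$, so $(\Flsec{s_1}-\Flsec{s_2})|_{\ISymps}\equiv 0$.
\end{proofof}
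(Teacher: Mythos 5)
Your argument is correct and is essentially the paper's own proof: both construct a homomorphism $E$ (the paper's $t$) with $\partial\circ E=s_1-s_2$ using $s_1-s_2\co H_1(\Sigma)\to B_1(\Sigma\bs\{*\})$, define the cochain $v$ (the paper's $\kappa$) by integrating $\omega_\Sigma$ over $E$, and identify $\Flsec{s_1}-\Flsec{s_2}$ with the coboundary $\delta v$. The only cosmetic difference is that you verify the identity by exhibiting an explicit $2$--cycle and applying Stokes's theorem, whereas the paper recognizes $C-t(\phi^{-1}_*[c])+\phi^{-1}_*t([c])$ as a valid chain for computing $\Flsec{s_2}(\phi)([c])$ directly; the content is the same.
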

\begin{proof}
Let $s_1,s_2\co H_1(\Sigma)\to Z_1(\Sigma\bs\{*\})$ be homomorphic sections to the projection map.
We aim to show that $\Flsec{s_1}-\Flsec{s_2}$ is in $B^1(\Symps;H^1(\Sigma;\R))$.
Since $s_1$ and $s_2$ are both sections to the projection map, we know that $s_1-s_2\co H_1(\Sigma)\to B_1(\Sigma\bs\{*\})$.
So we can choose a homomorphism $t\co H_1(\Sigma)\to C_2(\Sigma\bs\{*\})$ with $\partial\circ t = s_1-s_2$.
Let $\kappa\in H^1(\Sigma;\R)$ be given by $\kappa([c])=\int_{t([c])}\omega_\Sigma$.
Let $\phi\in\Symps$ and let $[c]\in H_1(\Sigma)$.
Let $C\in C_2(\Sigma)$ with $\partial C =s_1(\phi^{-1}_*[c])-\phi^{-1}_*s_1([c])$.
Since $\phi$ is symplectic, we have $\kappa([c])=\int_{t([c])}\omega_\Sigma=\int_{\phi^{-1}_*t([c])}\omega_\Sigma$.
Using this fact, we have
\[\big(\Flsec{s_1}-\delta\kappa\big)(\phi)([c])=\Flsec{s_1}(\phi)([c])-\kappa(\phi^{-1}_*[c])+\kappa([c])=\int_{C-t(\phi^{-1}_*[c])+\phi^{-1}_*t([c])}\omega_\Sigma.\]
Note that $\partial(C-t(\phi^{-1}_*[c])+\phi^{-1}_*t([c]))=s_2(\phi^{-1}_*[c])-\phi^{-1}_*s_2([c])$.
By the definition of $\Flsec{s_2}$, we have $\Flsec{s_2}(\phi)([c])=\big(\Flsec{s_1}-\delta\kappa\big)(\phi)([c])$.
Then since $\phi$ and $[c]$ were arbitrary, $\Flsec{s_1}-\delta\kappa=\Flsec{s_2}$.
Since $\delta\kappa|_{\ISymps}=0$, we have also proven the second statement.
\end{proof}

\subsection{The Jacobian extended flux map}
\label{ss:fljac}
In this subsection we freely use the notation of Section~\ref{ss:jacobianprelim}.
As in that section, we fix a basepoint-preserving, symplectic embedding $J\co(\Sigma,\omega_\Sigma)\to (X,\omega_X)$.
We start by precisely giving the definition of $\Fljac{J}$.
\begin{definition}
Let $\phi\in\Symps$
and let $\gamma\co S^1\to\Sigma$ be a smooth map.
Let $K\co \Sigma\times [0,1]\to X$ be a piecewise-smooth homotopy from $J\circ\phi^{-1}$ to $\rho(\phi^{-1})\circ J$ relative to $*$.
Let $T$ be a representative of the fundamental class of $S^1\times[0,1]$ relative to its boundary.
The \emph{Jacobian flux crossed homomorphism} is the crossed homomorphism 
\[\Fljac{J}\co \Symps\to H^1(\Sigma;\R)\]
defined by
\[\Fljac{J}(\phi)(\gamma_*[S^1])=\int_{K_*(\gamma\times \mathrm{id})_*T}\omega_X.\]
\end{definition}
As usual $[S^1]$ denotes the fundamental class.
Since $\gamma$ is arbitrary, this defines $\Fljac{J}(\phi)$ on any element of $H_1(\Sigma)$.
Note that we know $J\circ\phi^{-1}$ and $\rho(\phi^{-1})\circ J$ are in the same based homotopy class because they induce the same map on fundamental groups and $X$ is aspherical.
The goal of this subsection is to show the following.
\begin{proposition}\label{pr:fljacwelldef}
For a fixed $J$, $\Fljac{J}$ is a well-defined crossed homomorphism extending $\Fl|_{\Sympso}$.
\end{proposition}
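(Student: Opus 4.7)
The plan is to verify four things: (i) the integral is independent of the choice of homotopy $K$; (ii) it depends on $\gamma$ only through $[c]=\gamma_*[S^1]\in H_1(\Sigma)$; (iii) $\Fljac{J}$ satisfies the crossed homomorphism identity; and (iv) it restricts to $\Fl$ on $\Sympso$. Throughout I would rewrite the integral as $\int_{c\times[0,1]}K^*\omega_X$, where $c$ is any smooth $1$--cycle representing $[c]$.

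For (i), I would apply Lemma~\ref{le:relhtpy} with $Y=\Sigma\times[0,1]$, $A=\Sigma\times\{0,1\}\cup\{*\}\times[0,1]$ (which is connected and contains the basepoint), and $Z=X$ (aspherical). Two valid homotopies $K_0,K_1$ agree on $A$ and induce the same map on $\pi_1$, so by the lemma they are joined by a further homotopy $H\co Y\times[0,1]\to X$ relative to $A$. Applying Stokes to $H^*\omega_X$ on $c\times[0,1]\times[0,1]$, together with $d\omega_X=0$, kills the side-boundary contributions (where $H$ is constant in the new $[0,1]$--direction) and identifies the two integrals. This is the only step that is not a direct Stokes calculation on a product, and it is exactly what motivates the technical refinement in Lemma~\ref{le:relhtpy}; I expect this to be the main obstacle.

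For (ii), linearity in $c$ is immediate, so it suffices to show the integral vanishes when $c=\partial D$ for some $D\in C_2(\Sigma)$. A Stokes argument on $D\times[0,1]$ (using $d\omega_X=0$) reduces this to the claim $K^*\omega_X=\omega_\Sigma$ on both $\Sigma\times\{0\}$ and $\Sigma\times\{1\}$, which is immediate since $J$, $\phi^{-1}$, and $\rho(\phi^{-1})$ (by Lemma~\ref{le:jacaxn}) are all symplectic. For (iii), I would construct a homotopy for $\psi\phi$ by concatenating $K_\phi\circ(\psi^{-1}\times\mathrm{id})$ (from $J\circ(\psi\phi)^{-1}$ to $\rho(\phi^{-1})\circ J\circ\psi^{-1}$) with $\rho(\phi^{-1})\circ K_\psi$ (from there to $\rho((\psi\phi)^{-1})\circ J$). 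A change of variables via $\psi^{-1}\times\mathrm{id}$ on the first half and the fact that $\rho(\phi^{-1})$ is symplectic on $X$ on the second half then yield
\[\Fljac{J}(\psi\phi)([c])=\Fljac{J}(\phi)(\psi^{-1}_*[c])+\Fljac{J}(\psi)([c]),\]
which is the crossed homomorphism identity after unpacking the $\Symp$--action on $H^1(\Sigma;\R)$.

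Finally for (iv), when $\phi\in\Sympso$ the action $\rho(\phi^{-1})$ is the identity on $X$, so for any smooth isotopy $\phi_t$ from the identity to $\phi$ in $\Symps$, the formula $K(p,t)=J(\phi_t(\phi^{-1}(p)))$ defines a valid homotopy. Then $K^*\omega_X=(\phi_t\circ\phi^{-1})^*\omega_\Sigma$, and reparameterizing (equivalently, taking $\phi^{-1}\circ\gamma$ as the representative of $[c]$) converts the integral into the area-swept formula from the introduction, matching $\Fl(\phi)([c])$.
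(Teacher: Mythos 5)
Your proposal is correct and follows essentially the same route as the paper: the same application of Lemma~\ref{le:relhtpy} with $A=(\Sigma\times\{0,1\})\cup(\{*\}\times[0,1])$ plus Stokes for homotopy-independence, the same use of symplecticity of the endpoint maps to get dependence only on $[c]$ (the paper's Lemma~\ref{le:choiceofc}), the same concatenation of homotopies for the crossed homomorphism identity, and the same identification with $\Fl$ on $\Sympso$ via $J^*\omega_X=\omega_\Sigma$. The only differences are cosmetic (pullback forms on product chains versus push-forwards of fundamental class representatives).
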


The proof is broken up into three claims, which appeal to the following two lemmas.

\begin{lemma}\label{le:classchoice}
Suppose $T_1,T_2\in C_2(S^1\times [0,1])$ are two representatives of the fundamental class relative to the boundary.
If $K\co S^1\times[0,1]\to X$, then
\[\int_{K_*T_1}\omega_X=\int_{K_*T_2}\omega_X.\]
\end{lemma}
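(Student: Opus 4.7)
The plan is to exploit that $T_1$ and $T_2$ represent the same class in the relative homology $H_2(S^1\times[0,1],\partial(S^1\times[0,1]))$, so their chain-level difference is a relative boundary, and then apply Stokes's theorem together with the dimensional observation that $\omega_X$ pulls back trivially to a $1$-dimensional space.

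Concretely, since $T_1$ and $T_2$ both represent the relative fundamental class, there exist a chain $D\in C_3(S^1\times[0,1])$ and a chain $E\in C_2(\partial(S^1\times[0,1]))$ such that
\[T_1-T_2=\partial D + E.\]
Pushing forward by $K$ and integrating $\omega_X$, we obtain
\[\int_{K_*T_1}\omega_X-\int_{K_*T_2}\omega_X=\int_{K_*\partial D}\omega_X+\int_{K_*E}\omega_X.\]
The first term on the right-hand side vanishes by the standard argument: pushforward commutes with $\partial$, so by Stokes's theorem and the fact that $\omega_X$ is closed,
\[\int_{K_*\partial D}\omega_X=\int_{\partial K_*D}\omega_X=\int_{K_*D}d\omega_X=0.\]

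The second term vanishes for dimensional reasons. The boundary $\partial(S^1\times[0,1])=S^1\times\{0,1\}$ is a $1$-dimensional manifold, so for each piecewise-smooth singular $2$-simplex $\sigma\co\Delta^2\to\partial(S^1\times[0,1])$ appearing in $E$, the composition $K\circ\sigma$ factors through a $1$-dimensional space. Consequently $(K\circ\sigma)^*\omega_X=0$ at every smooth point (any $2$-form pulled back through a map of rank at most one is zero), so $\int_{K_*E}\omega_X=0$. Combining these two vanishings yields the lemma.

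The only point that requires care is ensuring the decomposition $T_1-T_2=\partial D+E$ can be taken with $E$ actually supported in the $1$-dimensional boundary (rather than landing in some thickening of it), but this is precisely what it means for the relative homology class to vanish at the chain level. I do not expect any other obstacle, since both ingredients — Stokes for the interior piece and the dimensional collapse for the boundary piece — are entirely standard.
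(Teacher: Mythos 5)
Your proposal is correct and follows essentially the same route as the paper: the paper also writes $T_1-T_2$ as a $3$-chain boundary plus a $2$-chain supported on $S^1\times\{0,1\}$ (it constructs the boundary correction $C_0+C_1$ explicitly and uses $H_2(S^1\times[0,1])=0$, which is just a hands-on version of your appeal to the definition of the relative class), then kills the interior term by Stokes and the boundary term by the rank-one pullback argument. No gaps.
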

\begin{proof}
For $i=0,1$, we can find $C_i\in C_2(S^1\times\{i\})$ with $T_1-T_2+C_0+C_1\in Z_2(S^1\times[0,1])$.
Since $H_2(S^1\times[0,1])=0$, we can find a chain $D\in C_3(S^1\times[0,1])$ bounding this cycle.
Since each $C_i$ is supported on a $1$--dimensional submanifold, we know that $K^*\omega_X$ is degenerate on this submanifold and
\[0=\int_{C_i}K^*\omega_X=\int_{K_*C_i}\omega_X.\]
So by Stokes's theorem, we are done:
\[0=\int_{K_*D}d\omega_X=\int_{\partial K_*D}\omega_X=\int_{K_*T_1}\omega_X-\int_{K_*T_2}\omega_X.\qedhere\]
\end{proof}

Fix a representative $T\in Z_1(S^1)$ of the fundamental class $[S^1]$.
Let $i_0, i_1\co S^1\to S^1\times[0,1]$ be the time-zero and time-one inclusions.
Let $T'\in C_2(S^1\times [0,1])$ represent the fundamental class of $S^1\times[0,1]$ relative to its boundary, with $\partial T'=(i_1)_*T-(i_0)_*T$.
By Lemma~\ref{le:classchoice}, our constructions do not depend on the choices we just made.

\begin{lemma}\label{le:choiceofc}
Let $[c]\in H_1(\Sigma)$ and let $K\co\Sigma \times [0,1]\to X$ with symplectic endpoint maps $K_0,K_1\co\Sigma\to X$.
If $\gamma_0,\gamma_1\co S^1\to \Sigma$ are two maps with $(\gamma_0)_*[T]=(\gamma_1)_*[T]=[c]$, then
\[\int_{K_*(\gamma_0\times\mathrm{id})_*T}\omega_X=\int_{K_*(\gamma_\times\mathrm{id})_*T}\omega_X.\]
\end{lemma}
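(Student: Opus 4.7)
The plan is to exhibit a concrete $3$-chain in $\Sigma\times[0,1]$ whose boundary relates $(\gamma_0\times\mathrm{id})_*T'$ and $(\gamma_1\times\mathrm{id})_*T'$ up to $2$-chains lying in the time slices $\Sigma\times\{0\}$ and $\Sigma\times\{1\}$, then push everything forward by $K$ and apply Stokes's theorem together with the fact that $K_0$ and $K_1$ are symplectic.

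First, since $(\gamma_0)_*[T]=(\gamma_1)_*[T]=[c]$ in $H_1(\Sigma)$, I can pick a chain $E\in C_2(\Sigma)$ with $\partial E=(\gamma_0)_*T-(\gamma_1)_*T$. By Lemma~\ref{le:classchoice} the particular representative $T'$ of the relative fundamental class is immaterial, so I take $T'=T\times[0,1]$, which gives $(\gamma_i\times\mathrm{id})_*T'=(\gamma_i)_*T\times[0,1]$. Writing $E_i=E\times\{i\}\in C_2(\Sigma\times[0,1])$ and $F=E\times[0,1]\in C_3(\Sigma\times[0,1])$, the product boundary formula yields
\[
\partial F=\partial E\times[0,1]+E\times(\{1\}-\{0\})=(\gamma_0\times\mathrm{id})_*T'-(\gamma_1\times\mathrm{id})_*T'+E_1-E_0.
\]

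Now push this identity forward by $K$ and integrate against $\omega_X$. Since $\omega_X$ is closed, Stokes's theorem gives $\int_{K_*\partial F}\omega_X=\int_{K_*F}d\omega_X=0$. The endpoint restrictions of $K$ are $K_0$ and $K_1$, which are symplectic by hypothesis, so $K_i^*\omega_X=\omega_\Sigma$, and therefore
\[
\int_{K_*E_i}\omega_X=\int_E K_i^*\omega_X=\int_E\omega_\Sigma,\qquad i=0,1.
\]
The two terms $\int_{K_*E_1}\omega_X$ and $\int_{K_*E_0}\omega_X$ cancel, and rearranging the pushed-forward boundary identity gives the desired equality
\[
\int_{K_*(\gamma_0\times\mathrm{id})_*T'}\omega_X=\int_{K_*(\gamma_1\times\mathrm{id})_*T'}\omega_X.
\]

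The main (and essentially only) subtlety is the bookkeeping with product chains and sign conventions in the Eilenberg--Zilber boundary formula; the geometric idea (cobound the cylinder difference by $E\times[0,1]$ and use symplecticity of $K_0,K_1$ to cancel the time-slice contributions) is routine once set up. No further hypotheses on $K$ in the interior of $\Sigma\times[0,1]$ are used, which is consistent with the lemma's generality.
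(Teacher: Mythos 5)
Your proof is correct and follows essentially the same route as the paper's: both cobound the difference of the two cylinders $(\gamma_0\times\mathrm{id})_*T'-(\gamma_1\times\mathrm{id})_*T'$ by a $3$-chain whose remaining boundary lies in the time slices, then apply Stokes's theorem and cancel the slice contributions using $K_i^*\omega_X=\omega_\Sigma$. The only (cosmetic) difference is that you construct the $3$-chain explicitly as the product chain $E\times[0,1]$, whereas the paper obtains it from the vanishing of $H_2((\Sigma\setminus\{p\})\times[0,1])$ after deleting a point $p$ away from the supports of the $(\gamma_i)_*T$.
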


\begin{proof}
Choose a point $p\in\Sigma\bs (\cup_{i=1,2}\supp (\gamma_i)_*T)$ and a chain $C\in C_2(\Sigma\bs \{p\})$ with $\partial C=(\gamma_1)_*T-(\gamma_0)_*T$.
Let $j_0,j_1\co \Sigma\to \Sigma\times[0,1]$ be the time-zero and time-one inclusion maps.
Let \[C'=(j_0)_*C-(j_1)_*C+(\gamma_1\times \mathrm{id})_*T'-(\gamma_0\times \mathrm{id})_*T'\in C_2((\Sigma\bs \{p\})\times[0,1]).\]
Since $\partial(C')=0$ and $H_2((\Sigma\bs \{p\})\times[0,1])=0$, there is a chain $D\in C_3(\Sigma\times[0,1])$ with $\partial D=C'$.
Then
\[\partial (K_*D)=(K_0)_*C-(K_1)_*C+K_*(\gamma_1\times \mathrm{id})_*T'-K_*(\gamma_0\times \mathrm{id})_*T'.\]
Since $K_1$ and $K_0$ are symplectic, we have
\[\int_{(K_0)_*C}\omega_X=\int_C\omega_\Sigma=\int_{(K_1)_*C}\omega_X.\]
So by Stokes's theorem,
\[0=\int_{K_*D}d\omega_X=\int_{\partial K_*D}\omega_X=\int_{K_*(\gamma_1\times \mathrm{id})_*T'}\omega_X-\int_{K_*(\gamma_0\times \mathrm{id})_*T'}\omega_X.\qedhere\]
\end{proof}

\begin{claim}
For $\phi\in\Symps$ and $[c]\in H_1(\Sigma)$, the value $\Fljac{J}(\phi)([c])$ in $\R$ is well defined.
\end{claim}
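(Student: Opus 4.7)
By the remark preceding this claim (just after Lemma~\ref{le:classchoice}), the expression defining $\Fljac{J}(\phi)([c])$ is already independent of the choice of $T'$ representing the fundamental class of $S^1\times[0,1]$ relative to its boundary. What remains is to verify independence from the choice of loop $\gamma\co S^1\to\Sigma$ representing $[c]$ and from the choice of homotopy $K\co\Sigma\times[0,1]\to X$ (relative to $*$) from $J\circ\phi^{-1}$ to $\rho(\phi^{-1})\circ J$.

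For $\gamma$--independence I will invoke Lemma~\ref{le:choiceofc}; its hypothesis that the two endpoint maps $J\circ\phi^{-1}$ and $\rho(\phi^{-1})\circ J$ are symplectic holds because $J$ is symplectic (by the convention adopted at the end of Section~\ref{ss:jacobianprelim}), $\phi^{-1}\in\Symps$, and $\rho(\phi^{-1})$ is symplectic by Lemma~\ref{le:jacaxn}.

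The substantive step is $K$--independence. Given two such homotopies $K_1,K_2$, let $A=(\Sigma\times\{0,1\})\cup(\{*\}\times[0,1])\subset\Sigma\times[0,1]$; this is a closed, connected subcomplex (the arc $\{*\}\times[0,1]$ joins the two $\Sigma$--slabs) on which $K_1$ and $K_2$ agree by hypothesis. Since $\Sigma\times[0,1]$ deformation retracts onto $\Sigma\times\{0\}\subset A$, where $K_1=K_2=J\circ\phi^{-1}$, the two maps also induce the same map on $\pi_1(\Sigma\times[0,1],(*,0))$. Because $X$ is aspherical, Lemma~\ref{le:relhtpy} then supplies a homotopy $H\co\Sigma\times[0,1]\times[0,1]\to X$ from $K_1$ to $K_2$ relative to $A$. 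I plan to fix a $1$--chain $I'\in C_1([0,1])$ with $\partial I'=\{1\}-\{0\}$, push $T'\times I'$ into $X$ through $H\circ(\gamma\times\mathrm{id}\times\mathrm{id})$ to obtain a $3$--chain $D$, and apply Stokes's theorem using $d\omega_X=0$. The boundary $\partial(T'\times I')=(\partial T')\times I'+T'\times(\{1\}-\{0\})$ pushes forward to $(K_2\circ(\gamma\times\mathrm{id}))_*T'-(K_1\circ(\gamma\times\mathrm{id}))_*T'$ plus two side-face contributions supported in $(\Sigma\times\{0,1\})\times[0,1]\subset A\times[0,1]$; on the latter, $H$ is constant in its last coordinate (by the relative homotopy property), so $H^*\omega_X$ is degenerate in the $I'$--direction and these side integrals vanish.

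The main obstacle I anticipate lies in this third step: I need to verify the CW and $\pi_1$ hypotheses of Lemma~\ref{le:relhtpy} and then isolate which pieces of $\partial D$ give the desired difference versus which pieces vanish by degeneracy. Once the relative homotopy $H$ is in hand, the rest is a direct Stokes computation.
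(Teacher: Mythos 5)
Your proposal is correct and follows essentially the same route as the paper: $T'$--independence from Lemma~\ref{le:classchoice}, $\gamma$--independence from Lemma~\ref{le:choiceofc} after checking that $J\circ\phi^{-1}$ and $\rho(\phi^{-1})\circ J$ are symplectic, and $K$--independence by applying Lemma~\ref{le:relhtpy} with $A=(\Sigma\times\{0,1\})\cup(\{*\}\times[0,1])$ and then running Stokes's theorem on a relative fundamental class of $S^1\times[0,1]^2$, with the side faces contributing zero because $\omega_X$ pulls back degenerately there. The only cosmetic difference is that the paper phrases the vanishing of the side-face integrals as the maps factoring through $S^1\to X$, which is the same degeneracy observation you make.
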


\begin{proof}
According to Lemma~\ref{le:classchoice}, the choice of $T'$ does not matter.
Since $\phi^{-1}$, $\rho(\phi^{-1})$ and $J$ are all symplectic, Lemma~\ref{le:choiceofc} applies to any homotopy $K$ from $J\circ\phi^{-1}$ to $\rho(\phi^{-1})\circ J$ and the choice of $\gamma\co S^1\to\Sigma$ with $\gamma_*[T]=[c]$ also does not matter.
The only remaining choice is the choice of homotopy.

Fix a choice of $\gamma$ and let $K,K'\co \Sigma\times[0,1]\to X$ be two different piecewise-smooth homotopies relative to the basepoint from  $J\circ \phi^{-1}$ to $\rho(\phi^{-1})\circ J$.
We invoke Lemma~\ref{le:relhtpy} with $Z=X$, $Y=\Sigma\times[0,1]$, $A=(\Sigma\times\{0,1\})\cup(\{*\}\times[0,1])$, $f_0=K$ and $f_1=K'$;
this gives us a homotopy $L\co\Sigma\times[0,1]^2\to X$ from $K$ to $K'$ relative to $\Sigma\times\{0,1\}$ and $\{*\}\times[0,1]$.
By approximation theory, we may assume this $L$ is piecewise-smooth.
Since $L$ is relative to $\Sigma\times\{0,1\}$, we know $L(s,0,t)=J(\phi^{-1}(s))$ and $L(s,0,t)=\rho(\phi^{-1})(J(s))$ for any $s\in\Sigma$, $t\in[0,1]$.
Then we can find a representative $T''$ of the fundamental class of $S^1\times[0,1]^2$ relative to its boundary, such that
\begin{align*}
\partial (L_*(\gamma\times \mathrm{id})_*T'')=& K'_*(\gamma\times \mathrm{id})_*T'-K_*(\gamma\times \mathrm{id})_*T'\\
&\quad+J_*\phi^{-1}_*(\gamma\times \mathrm{id})_*T'-\rho(\phi^{-1})_*J_*(\gamma\times \mathrm{id})_*T'.
\end{align*}
The maps $J\circ\phi^{-1}\circ(\gamma\times \mathrm{id})$ and $\rho(\phi^{-1})\circ J\circ (\gamma\times \mathrm{id})$ factor through maps $S^1\to X$.
This means that 
\[\int_{J_*\phi^{-1}_*(\gamma\times \mathrm{id})_*T'}\omega_X=\int_{\rho(\phi^{-1})_*J_*(\gamma\times \mathrm{id})_*T'}\omega_X =0,\]
since these integrals can be computed on $S^1$, where the pull-back of $\omega_X$ is trivial.
So by Stokes's theorem
\[
0=\int_{L_*T''}d\omega_X=\int_{\partial L_*T''}\omega_X
=\int_{K'_*(\gamma\times \mathrm{id})_*T'}\omega_X-\int_{K_*(\gamma\times \mathrm{id})_*T'}\omega_X.
\qedhere\]
\end{proof}

\begin{claim}
The map $\Fljac{J}$ is a crossed homomorphism.
\end{claim}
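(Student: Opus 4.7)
The plan is to construct a concatenated homotopy that witnesses $\Fljac{J}(\psi\phi)$ and to split the resulting integral into the two terms of the cocycle identity. Fix $\phi,\psi\in\Symps$, a smooth loop $\gamma\co S^1\to\Sigma$, and piecewise-smooth based homotopies $K_\phi,K_\psi\co \Sigma\times[0,1]\to X$ with $K_\phi$ running from $J\circ\phi^{-1}$ to $\rho(\phi^{-1})\circ J$ and $K_\psi$ running from $J\circ\psi^{-1}$ to $\rho(\psi^{-1})\circ J$. Since $(\psi\phi)^{-1}=\phi^{-1}\psi^{-1}$ and $\rho$ is a homomorphism, I need a based homotopy from $J\circ\phi^{-1}\circ\psi^{-1}$ to $\rho(\phi^{-1})\circ\rho(\psi^{-1})\circ J$.

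I would define $K\co \Sigma\times[0,1]\to X$ by the two-phase concatenation
\[K(s,t)=\begin{cases} K_\phi\bigl(\psi^{-1}(s),\,2t\bigr) & t\in[0,\tfrac12],\\ \rho(\phi^{-1})\bigl(K_\psi(s,\,2t-1)\bigr) & t\in[\tfrac12,1].\end{cases}\]
At $t=\tfrac12$ both expressions equal $\rho(\phi^{-1})(J(\psi^{-1}(s)))$, so $K$ is well defined and piecewise smooth; because $\psi^{-1}$ fixes $*$, $\rho(\phi^{-1})$ fixes $0$, and both $K_\phi$ and $K_\psi$ are based, $K$ is based as well. Then pick a fundamental-class representative $T'=T'_1+T'_2$ for $S^1\times[0,1]$ rel boundary with $T'_i$ supported in $S^1\times[0,\tfrac12]$ and $S^1\times[\tfrac12,1]$ respectively, each one (after rescaling time) itself a relative fundamental class of $S^1\times[0,1]$. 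Using $K$ and $T'$ to compute $\Fljac{J}(\psi\phi)$, which is legitimate by the well-definedness just established together with Lemma~\ref{le:classchoice}, the integral splits as
\[\Fljac{J}(\psi\phi)(\gamma_*[S^1])=\int_{K_*(\gamma\times\mathrm{id})_*T'_1}\omega_X+\int_{K_*(\gamma\times\mathrm{id})_*T'_2}\omega_X.\]

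On the first piece, $K\circ(\gamma\times\mathrm{id})$ restricted to $S^1\times[0,\tfrac12]$ factors (after rescaling time) as $K_\phi\circ((\psi^{-1}\circ\gamma)\times\mathrm{id})$, and $(\psi^{-1}\circ\gamma)_*[S^1]=\psi^{-1}_*\gamma_*[S^1]$, so by Lemma~\ref{le:choiceofc} and the definition of $\Fljac{J}(\phi)$ this integral equals $\Fljac{J}(\phi)(\psi^{-1}_*\gamma_*[S^1])=\bigl(\psi\cdot\Fljac{J}(\phi)\bigr)(\gamma_*[S^1])$. On the second piece, since $\rho(\phi^{-1})$ is symplectic by Lemma~\ref{le:jacaxn}, $\rho(\phi^{-1})^*\omega_X=\omega_X$, and the integral reduces to $\int_{(K_\psi)_*(\gamma\times\mathrm{id})_*T'}\omega_X=\Fljac{J}(\psi)(\gamma_*[S^1])$. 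Summing and varying $\gamma$ gives the cocycle identity $\Fljac{J}(\psi\phi)=\psi\cdot\Fljac{J}(\phi)+\Fljac{J}(\psi)$. I expect the only real obstacle to be bookkeeping: confirming that $K$ is a bona fide piecewise-smooth based homotopy whose two phases meet compatibly at the intermediate map $\rho(\phi^{-1})\circ J\circ\psi^{-1}$, and organising the reparametrisation so that the definitions of $\Fljac{J}(\phi)$ and $\Fljac{J}(\psi)$ apply to each piece, but both concerns are handled uniformly by Lemmas~\ref{le:classchoice} and~\ref{le:choiceofc}.
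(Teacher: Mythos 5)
Your proposal is correct and follows essentially the same route as the paper: concatenate the homotopy for one factor precomposed with the other symplectomorphism together with the homotopy for the other factor postcomposed with the induced torus map, split the integral over the two phases, and use that $\rho$ acts symplectically on $X$ to identify the second term. The only differences are the labelling of the product ($\psi\phi$ versus $\phi\psi$) and your more explicit handling of the time-reparametrisation via Lemma~\ref{le:classchoice}, which the paper leaves implicit.
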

\begin{proof}
Let $\phi,\psi\in\Symps$ and let $\gamma\co S^1\to \Sigma$.
Let $K_\phi, K_\psi\co \Sigma\times[0,1]\to X$, with $K_\phi$ a smooth homotopy from  $J\circ \phi^{-1}$ to $\rho(\phi^{-1})\circ J$ and
$K_\psi$ a smooth homotopy from $J\circ \psi^{-1}$ to $\rho(\psi^{-1})\circ J$.
Let $K\co \Sigma\times[0,1]\to X$ be the concatenation of $K_\psi\circ(\phi^{-1}\times \mathrm{id})$ followed by $\rho(\psi^{-1})\circ K_\phi$.
This makes sense because $K_\psi\circ(\phi^{-1}\times \mathrm{id})$ ends at $\rho(\psi^{-1})\circ J\circ \phi^{-1}$, which is where $\rho(\psi^{-1})\circ K_\phi$ begins.
Then $K$ is a piecewise-smooth homotopy from $J\circ (\phi\psi)^{-1}$ to $\rho(\phi\psi)^{-1}\circ J$.

Note that
\[\int_{(K_\psi)_*(\phi^{-1}\times \mathrm{id})_*(\gamma\times \mathrm{id})_*T'}\omega_X
=\Fljac{J}(\psi)(\phi^{-1}_*\gamma_*[T]).\]
Since $\Symps$ acts symplectically on $X$, 
\[\int_{\rho(\psi^{-1})_*(K_\phi)_*(\gamma\times \mathrm{id})_*T'}\omega_X=\int_{(K_\phi)_*(\gamma\times \mathrm{id})_*T'}\omega_X=\Fljac{J}(\phi)(\gamma_*[T]).\]
Then we have
\begin{align*}
\Fljac{J}(\phi\psi)(\gamma_*[T])&=\int_{K_*(\gamma\times \mathrm{id})_*T'}\omega_X\\
&=\int_{\rho(\psi^{-1})_*(K_\phi)_*(\gamma\times \mathrm{id})_*T'}\omega_X+\int_{(K_\psi)_*(\phi^{-1}\times \mathrm{id})_*(\gamma\times \mathrm{id})_*T'}\omega_X\\
&=\Fljac{J}(\psi)(\phi^{-1}_*\gamma_*[T])+\Fljac{J}(\phi)(\gamma_*[T]).
\end{align*}
Then $\Fljac{J}(\phi\psi)=\phi\cdot\Fljac{J}(\psi)+\Fljac{J}(\phi)$, since $\gamma$ was arbitrary.
\end{proof}

\begin{claim}
The map $\Fljac{J}$ agrees with $\Fl$ on $\Sympso$.
\end{claim}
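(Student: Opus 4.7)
The plan is to exhibit an explicit, natural homotopy $K$ and verify that, with this choice, the integral defining $\Fljac{J}(\phi)([c])$ collapses onto the integral defining $\Fl(\phi)([c])$, using the compatibility $J^*\omega_X = \omega_\Sigma$.

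First I would observe that any $\phi \in \Sympso$ acts trivially on $H_1(\Sigma)$, so $\rho(\phi^{-1}) = \mathrm{id}_X$, and a valid homotopy $K$ must run from $J \circ \phi^{-1}$ to $J$. Since $\phi \in \Sympso$, I may choose a smooth path $\phi_t \in \Sympso \subset \Symps$ with $\phi_0 = \mathrm{id}$ and $\phi_1 = \phi$, and define
\[
K(s,t) = J(\phi_t(\phi^{-1}(s))).
\]
This satisfies $K(\cdot, 0) = J \circ \phi^{-1}$, $K(\cdot, 1) = J$, and $K(*, t) = J(*) = 0$ for every $t$ (because each $\phi_t$ fixes $*$), so $K$ is an admissible piecewise-smooth homotopy relative to the basepoint.

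Next, for any smooth $\gamma\co S^1 \to \Sigma$ representing $[c]$, set $\tilde\gamma = \phi^{-1} \circ \gamma$ and $L(x,t) = \phi_t(\tilde\gamma(x))$. Then $K \circ (\gamma \times \mathrm{id}) = J \circ L$, and since $J^*\omega_X = \omega_\Sigma$ by Lemma~\ref{le:pullbacks} together with the convention $\omega_\Sigma = \omega'_\Sigma$, I get
\[
\Fljac{J}(\phi)([c]) = \int_{K_*(\gamma\times\mathrm{id})_*T'} \omega_X = \int_{L_*T'} \omega_\Sigma.
\]

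Finally, I would recognize $L$ as precisely the chain produced by dragging the cycle $\tilde\gamma$ along the isotopy $\phi_t$ from $\mathrm{id}$ to $\phi$, so by the definition of $\Fl$ recalled in the introduction, the last integral equals $\Fl(\phi)(\tilde\gamma_*[S^1])$. Since $\phi$ acts trivially on $H_1$, $\tilde\gamma_*[S^1] = \phi^{-1}_*[c] = [c]$, and well-definedness of $\Fl$ on homology gives $\Fl(\phi)(\tilde\gamma_*[S^1]) = \Fl(\phi)([c])$. As $[c]$ was arbitrary, $\Fljac{J}(\phi) = \Fl(\phi)$. I do not expect any real obstacle; the only care needed is to confirm that $K$ is indeed basepoint-relative (immediate from $\phi_t(*) = *$) and to invoke Lemma~\ref{le:pullbacks} at the right step.
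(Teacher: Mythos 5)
Your proof is correct and follows essentially the same route as the paper: both choose the homotopy obtained by composing $J$ with an isotopy in $\Sympso$ connecting $\phi^{-1}$ to the identity, pull $\omega_X$ back to $\omega_\Sigma$ via $J$, and identify the resulting integral with $\Fl(\phi)([c])$. The only (harmless) difference is bookkeeping — you drag $\phi^{-1}\circ\gamma$ along $\phi_t$ and use that $\phi$ acts trivially on $H_1(\Sigma)$, whereas the paper drags $\gamma$ along the reverse isotopy and uses that $\phi$ is symplectic.
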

\begin{proof}
Let $\gamma\co S^1\to \Sigma$ and let $\phi\in\Sympso$.
Then $\rho(\phi^{-1})$ is the identity.
Let $K\co \Sigma\times[0,1]\to \Sigma$ be a homotopy through $\Sympo$ from $\phi^{-1}$ to the identity.
Then $\phi\circ K$ is a homotopy through $\Sympo$ from the identity to $\phi$, so
\[\Fl(\phi)(\gamma_*[T])=\int_{\phi_*K_*(\gamma\times \mathrm{id})_*[T']}\omega_\Sigma=\int_{K_*(\gamma\times \mathrm{id})_*[T']}\omega_\Sigma.\]
However, $J\circ K$ is a homotopy from $J\circ \phi^{-1}$ to $\rho(\phi^{-1})\circ J$, and
\[\Fljac{J}(\phi)(\gamma_*[T])=\int_{J_*K_*(\gamma\times \mathrm{id})_*[T']}\omega_X=\int_{K_*(\gamma\times \mathrm{id})_*[T']}J^*\omega_X.\]
Since $J^*\omega_X=\omega_\Sigma$, this proves the claim.
\end{proof}
The previous claim finishes the proof of Proposition~\ref{pr:fljacwelldef}.

\begin{proposition}\label{pr:jaccobound}
The cohomology class 
\[[\Fljac{J}]\in H^1(\Symps;H^1(\Sigma;\R))\]
 does not depend on $J$.
Consequently, $\Fljac{J}|_{\ISymps}$ does not depend on~$J$.
\end{proposition}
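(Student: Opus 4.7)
The plan is to mimic the proof of Proposition~\ref{pr:seccobound}: given two Abel--Jacobi maps $J,J'\co(\Sigma,\omega_\Sigma)\to(X,\omega_X)$, I want to produce a $0$--cochain $\kappa\in H^1(\Sigma;\R)$ such that $\Fljac{J'}-\Fljac{J}=-\delta\kappa$.

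By Lemma~\ref{le:abmap}, both $J_*$ and $J'_*$ agree with the Hurewicz map on $\pi_1(\Sigma,*)$, and both send $*$ to $0\in X$. Since $X$ is aspherical, a piecewise-smooth relative homotopy $H\co \Sigma\times[0,1]\to X$ from $J$ to $J'$ exists. I would define
\[\kappa(\gamma_*[T])=\int_{H_*(\gamma\times\mathrm{id})_*T'}\omega_X.\]
Because the endpoint maps $J$ and $J'$ of $H$ are both symplectic, Lemma~\ref{le:choiceofc} (applied with $K=H$ and $K_0=J$, $K_1=J'$) shows that this value depends only on $\gamma_*[T]\in H_1(\Sigma)$, so $\kappa$ extends linearly to a well-defined class in $H^1(\Sigma;\R)$.

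Next, for $\phi\in\Symps$, I would fix a homotopy $K$ from $J\circ\phi^{-1}$ to $\rho(\phi^{-1})\circ J$ and build a homotopy $K'$ from $J'\circ\phi^{-1}$ to $\rho(\phi^{-1})\circ J'$ by concatenating three pieces: first the time-reverse of $H\circ(\phi^{-1}\times\mathrm{id})$ (from $J'\circ\phi^{-1}$ to $J\circ\phi^{-1}$), then $K$ itself, then $\rho(\phi^{-1})\circ H$ (from $\rho(\phi^{-1})\circ J$ to $\rho(\phi^{-1})\circ J'$). The endpoints match, so this is a valid piecewise-smooth homotopy, and by Proposition~\ref{pr:fljacwelldef} it computes $\Fljac{J'}(\phi)([c])$. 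Splitting the integral over $K'_*(\gamma\times\mathrm{id})_*T'$ into its three pieces, the first contributes $-\kappa(\phi^{-1}_*[c])$ (by reversal), the second contributes $\Fljac{J}(\phi)([c])$ (by definition), and the third contributes $\kappa([c])$ after using $\rho(\phi^{-1})^*\omega_X=\omega_X$. Summing gives
\[\Fljac{J'}(\phi)([c])-\Fljac{J}(\phi)([c])=\kappa([c])-\kappa(\phi^{-1}_*[c])=-(\delta\kappa)(\phi)([c]),\]
so $\Fljac{J'}-\Fljac{J}=-\delta\kappa$, proving the first statement. The second statement is immediate: any $\phi\in\ISymps$ acts trivially on $H_1(\Sigma;\R)$, so $(\delta\kappa)(\phi)=0$.

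The main obstacle I anticipate is the careful assembly and sign-bookkeeping for the three concatenated pieces of $K'$ and the corresponding split of the integral, especially verifying that the pieces lifting to reversed or postcomposed homotopies produce the correct signs and pullbacks. This is essentially the same bookkeeping that appears in the proof that $\Fljac{J}$ is a crossed homomorphism, so no new technique should be required.
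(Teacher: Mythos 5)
Your proposal is correct, and it sets up the same coboundary term as the paper: both define $\kappa$ by integrating $\omega_X$ over the trace of a basepoint-relative homotopy from $J$ to $J'$, with well-definedness supplied by Lemma~\ref{le:classchoice} and Lemma~\ref{le:choiceofc}. Where you genuinely diverge is in the verification that the difference equals a coboundary. The paper invokes Lemma~\ref{le:relhtpy} a second time to produce a two-parameter homotopy $\widetilde L\co\Sigma\times[0,1]^2\to X$ from $L\circ(\phi^{-1}\times\mathrm{id})$ to $\rho(\phi^{-1})\circ L$, rel $(\Sigma\times\{0,1\})\cup(\{*\}\times[0,1])$, whose four boundary faces carry the four relevant integrals, and then applies Stokes's theorem to a $3$--chain filling the square. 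You instead build an explicit one-parameter homotopy for the pair $(J',\phi)$ by concatenating the reversed $H\circ(\phi^{-1}\times\mathrm{id})$, then $K$, then $\rho(\phi^{-1})\circ H$, and appeal to the already-established independence of $\Fljac{J'}(\phi)$ from the choice of homotopy (Proposition~\ref{pr:fljacwelldef}) to split the integral into the three terms $-\kappa(\phi^{-1}_*[c])$, $\Fljac{J}(\phi)([c])$, and $\kappa([c])$. This is exactly the concatenation device the paper already uses to prove that $\Fljac{J}$ is a crossed homomorphism, so no new machinery is needed, and it avoids the square-plus-Stokes computation; the trade-off is that the second use of Lemma~\ref{le:relhtpy} is hidden inside the well-definedness result rather than made explicit. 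Your sign bookkeeping is consistent with the paper's: $\Fljac{J'}-\Fljac{J}=-\delta\kappa$ is the same assertion as the paper's $\Fljac{J}-\Fljac{J'}=\delta\kappa$, and the restriction to $\ISymps$ kills $\delta\kappa$ for the reason you state.
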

\begin{proof}
Let $J,J'\co (\Sigma,*)\to (X,0)$ be two different choices of Abel--Jacobi map.
Then there is a smooth homotopy $L\co\Sigma\times[0,1]\to X$ from $J$ to $J'$ relative to $*$.
Let $\kappa\in H^1(\Sigma,\R)$ be given by
\[\kappa(\gamma_*[T])=\int_{L_*(\gamma\times \mathrm{id})_*T'}\omega_X\]
for $\gamma\co S^1\to \Sigma$.
Then by Lemma~\ref{le:classchoice}, $\kappa$ does not depend on the choice of $T'$, and since $J$ and $J'$ are symplectic, Lemma~\ref{le:choiceofc} applies and $\kappa(\gamma_*[T])$ depends only on $\gamma_*[T]$, not on $\gamma$.
Let $K, K'\co \Sigma\times [0,1]\to X$ be homotopies relative to basepoints from $J\circ\phi^{-1}$ to $\rho(\phi^{-1})\circ J'$ and from $J\circ\phi^{-1}$ to $\rho(\phi^{-1})\circ J'$, respectively.
By Lemma~\ref{le:relhtpy}, there is a smooth homotopy $\widetilde L\co\Sigma\times[0,1]^2\to X$ from $L\circ(\phi^{-1}\times \mathrm{id})$ to $\rho(\phi^{-1})\circ L$, relative to $(\Sigma\times\{0,1\})\cup(*\times[0,1])$.

Now fix $\gamma\co S^1\to \Sigma$ and $\phi\in\Symp$. 
We can find a representative $T''$ of the fundamental class of $C_3(\Sigma\times [0,1]^2)$ relative to its boundary with
\begin{align*}
\partial \widetilde L_*T''=& K_*(\gamma\times \mathrm{id})_*T'-K'_*(\gamma\times \mathrm{id})_*T'\\
&\quad+\rho(\phi^{-1})_*L_*(\gamma\times \mathrm{id})_*T'-L_*(\phi^{-1}\times \mathrm{id})_*(\gamma\times \mathrm{id})_*T'.
\end{align*}
Note that 
\[\int_{L_*(\phi^{-1}\times \mathrm{id})_*(\gamma\times \mathrm{id})_*T'}\omega_X=\kappa(\phi^{-1}_*\gamma_*[T]).\]
Since $\rho(\phi)^*\omega_X=\omega_X$, we have
\[\int_{\rho(\phi^{-1})_*L_*(\gamma\times \mathrm{id})_*T'}\omega_X=\kappa(\gamma_*[T]).\]
So by Stokes's theorem,
\[
\begin{split}
0&=\int_{\widetilde L_*D}d\omega_X\\
&=\int_{K'_*(\gamma\times \mathrm{id})_*T'}\omega_X-\int_{K_*(\gamma\times \mathrm{id})_*T'}\omega_X+\kappa(\gamma_*[T])-\kappa(\phi^{-1}_*\gamma_*[T]).
\end{split}
\]
Since $\gamma$ and $\phi$ were arbitrary, this shows that $\Fljac{J}-\Fljac{J'}=\delta\kappa$, the coboundary.
\end{proof}

\section{Differences of crossed homomorphisms related to flux}
\subsection{The first difference theorem}
\label{ss:diffmetsec}
The goal of this section is to prove Theorem~\ref{mt:hypvssec}.

\begin{lemma}\label{le:unISymp}
All extended flux maps on $\Symp$ restrict to the same map on $\ISymp$.
\end{lemma}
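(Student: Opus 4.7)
Let $F_1, F_2\co \Symp \to H^1(\Sigma;\R)$ be two extended flux maps and set $G = F_1 - F_2$. Then $G$ is a crossed homomorphism that vanishes on $\Sympo$, and my plan is to deduce $G|_{\ISymp}=0$ by descending $G$ to the mapping class group, pulling back along the Birman projection, and invoking the Morita rigidity statement collected at the end of Section~\ref{ss:johnsonprelim}. For the descent step, note that $\Sympo$ is normal in $\Symp$ and acts trivially on $H^1(\Sigma;\R)$, so for $\psi\in\Sympo$ the crossed homomorphism identity gives $G(\phi\psi)=G(\phi)$ and $G(\psi\phi)=\psi\cdot G(\phi)=G(\phi)$. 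Hence $G$ factors through a crossed homomorphism $\bar G\co\clM\to H^1(\Sigma;\R)$, using the Moser-based identification $\Symp/\Sympo\cong\clM$ (from Theorem~\ref{th:moser}).

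Next I would compose $\bar G$ with the natural projection $\ptM\to\clM$ to obtain a crossed homomorphism $\tilde G\co\ptM\to H^1(\Sigma;\R)$ that vanishes on the Birman kernel $\pi_1(\Sigma,*) = \ker(\ptM\to\clM)$ (Theorem~\ref{th:birmanexact}). Because $\ptI$ acts trivially on $H^1(\Sigma;\R)$, the restriction $\tilde G|_{\ptI}$ is an ordinary homomorphism, and by the remark following Proposition~\ref{pr:ext} --- which combines Morita's computation $H^1(\ptM;H^1(\Sigma;\R))\cong\R$ with Proposition~\ref{pr:ext} --- every such homomorphism must be a scalar multiple $\lambda\cdot D_\Sigma^{-1}\circ\Phi\circ\johom$ for some $\lambda\in\R$.

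The final step is to force $\lambda=0$. As observed after Theorem~\ref{th:birmanexact}, every point-pushing map lies in $\ptI$, so $\pi_1(\Sigma,*)\subseteq\ptI$ and $\tilde G|_{\ptI}$ vanishes there. However, Corollary~\ref{co:cjhvalue} evaluates $\Phi(\johom(T_{x_g}T_{x'_g}^{-1})) = (g-1)[x_g]$, which is nonzero in $H_1(\Sigma)$ for $g\geq 2$, so $\lambda$ must vanish. Therefore $\tilde G|_{\ptI}=0$, meaning $\bar G$ vanishes on $\pi(\ptI)=\clI$, and combined with the descent step this yields $G|_{\ISymp}=0$. The main obstacle in the argument is ruling out a nontrivial $\clM$-equivariant homomorphism $\clI\to H^1(\Sigma;\R)$; passing to the basepoint-preserving setting resolves this cleanly because Morita's rigidity and the explicit point-pushing computation in Corollary~\ref{co:cjhvalue} together pin down the relevant scalar at once.
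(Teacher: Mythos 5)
Your argument is correct, but it takes a genuinely different route from the paper. The paper's proof is essentially three lines: it cites Kotschick--Morita (Theorem~2 of \cite{km}), which says the class of extended flux maps in $H^1(\Symp;H^1(\Sigma;\R))$ is unique, so any two extended flux maps differ by a coboundary $\phi\mapsto\phi\cdot\kappa-\kappa$, and such a coboundary vanishes on $\ISymp$ because $\ISymp$ is exactly the kernel of the action on $H^1(\Sigma;\R)$. You avoid the Kotschick--Morita uniqueness theorem entirely: you use that the difference $G=F_1-F_2$ kills $\Sympo$ to descend it to $\clM$ (this does require the Moser-based surjectivity of $\Symp\to\clM$, which you correctly flag), pull back to $\ptM$, apply the rigidity statement after Proposition~\ref{pr:ext} (Morita's $H^1(\ptM;H_1(\Sigma;\R))\cong\R$) to identify $\tilde G|_{\ptI}$ with $\lambda\,D_\Sigma^{-1}\circ\Phi\circ\johom$, and then kill $\lambda$ using the vanishing of $\tilde G$ on the Birman kernel together with Corollary~\ref{co:cjhvalue}; the final surjectivity $\pi(\ptI)=\clI$ you assert is the easy observation that the $\ptM$-action on $H_1(\Sigma)$ factors through $\clM$. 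What each approach buys: the paper's is shorter and stays entirely inside $\Symp$, at the cost of invoking the strongest available black box about extended flux maps; yours trades that for a different external input (Morita's mapping class group computation, already quoted in Section~\ref{ss:johnsonprelim}) plus the point-pushing evaluation, and in doing so it anticipates the mechanism the paper itself uses later in the proof of Theorem~\ref{mt:hypvssec} (point-pushing maps detect the Johnson term). One stylistic caution: when you say ``every such homomorphism must be a scalar multiple,'' make explicit that the rigidity applies to restrictions to $\ptI$ of crossed homomorphisms defined on all of $\ptM$ (which $\tilde G$ is), not to arbitrary homomorphisms out of $\ptI$.
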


\begin{proof}
Kotschick--Morita proved in~\cite{km}, Theorem~2, that there is a unique cohomology class of extended flux maps in $H^1(\Symp,H^1(\Sigma;\R))$.
So any two extended flux maps differ by a coboundary.
Since $\ISymp$ is the kernel of the action $\Symp\curvearrowright H^1(\Sigma;\R)$, any such coboundary is trivial on $\ISymp$.
\end{proof}

\begin{lemma}\label{le:hamtrans}
The group $\Ham$ acts transitively on $\Sigma$ and contains a point-pushing map for each homotopy class of simple closed curve on $\Sigma$.
\end{lemma}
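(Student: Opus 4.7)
The plan is to prove both parts by constructing explicit time-one maps of Hamiltonian flows.

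\emph{Transitivity.} Given $p,q\in\Sigma$, I would join them by a smooth embedded arc, cover the arc by finitely many Darboux charts for $\omega_\Sigma$, and in each chart take a Hamiltonian (a bump function times a linear coordinate) whose flow translates a point a small distance along the arc. Composing the resulting time-one maps gives an element of $\Ham$ carrying $p$ to $q$. This is the standard symplectic-topology argument; on a surface it is especially concrete.

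\emph{Point-pushing.} Given a nontrivial free homotopy class $[a]$ of simple closed curves, I would first produce a smooth embedded representative $a$ in $[a]$ passing through the basepoint $*$, obtained by isotoping an arbitrary simple representative so as to pass through $*$ and smoothing there. By Moser stability applied to a tubular annulus of $a$, I can take coordinates $(\theta,r)\in S^1\times(-\epsilon,\epsilon)$ on a neighborhood $N$ of $a$ satisfying $\omega_\Sigma|_N=d\theta\wedge dr$, $a=S^1\times\{0\}$, and $*=(\theta_0,0)$. Choosing $f\in C^\infty((-\epsilon,\epsilon);\R)$ with compact support in $(-\epsilon/2,\epsilon/2)$ and $f'(0)=2\pi$, I would set $H=f(r)$ on $N$ and $H=0$ elsewhere. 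The Hamiltonian vector field $X_H=f'(r)\partial_\theta$ then generates the flow $\phi_t(\theta,r)=(\theta+tf'(r),r)$, so $\phi_1\in\Ham$ fixes $*$ and the path $t\mapsto \phi_t(*)$ traces out $a$ exactly once. Hence $\phi_1$ is a point-pushing map along a based loop in the free homotopy class $[a]$, in the sense of Theorem~\ref{th:birmanexact}. The null-homotopic case is trivial since the identity is a point-pushing map along any null-homotopic loop.

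The only mild technicalities are arranging a simple representative in $[a]$ through $*$ and obtaining the symplectic annular coordinates on $N$; both are standard moves. Once these are set up, the explicit Hamiltonian computation immediately produces $\phi_1$ with the required properties, so there is no serious obstacle in the proof.
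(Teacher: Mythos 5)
Your proposal is correct and essentially matches the paper's argument: the paper also produces the point-pushing map as a shear supported on a symplectic annular neighborhood of the curve, with the shear profile integrating to zero so that the flux across any transverse cycle vanishes; writing the profile as $f'(r)$ for a compactly supported $f$ (i.e.\ generating the shear by an autonomous Hamiltonian) makes this vanishing automatic, whereas the paper imposes $\int f=0$ by hand on an explicit isotopy. The only real difference is that the paper reads transitivity off the same family, since its isotopy satisfies $\phi_t(*)=\gamma(t)$ with each $\phi_t$ of zero flux, while you run a separate Darboux-chart argument; both are fine.
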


\begin{proof}
Let $\gamma\co [0,1]\to \Sigma$ parametrize a smooth simple closed curve $a$ based at $*$.
Let $A$ be the annulus $[-r,r]\times S^1$, where $S^1$ is $[0,\ell]/\mathord{\sim}$ for some $r,\ell>0$, with product area form $\omega_A$.
Let $N$ be a regular neighborhood of $\gamma$ with a symplectic map $(N,\omega_\Sigma)\to (A,\omega_A)$ carrying $\gamma$ to $\{0\}\times S^1$.

Let $f\co [-r,r]\to\R$ be a smooth function with the following properties:
$f(-r)=f(r)=0$;
all derivatives of $f$ are zero at $-r$ and $r$;
$f(0)=1$; and
$\int_{-r}^rf(x)dx=0$.
Such a function can easily be constructed as a sum of bump functions.
Let $H\co [0,1]\times A\to A$ be the homotopy with $H_t$ sending $(x,y)$ to $(x, y+\ell t f(x))$.
It is immediate that $H_t$ is area-preserving for each $t$.
Pull back $H_t$ to $N$ and extend by the identity to get a symplectomorphism $\phi_t$.
It follows that 
\[\Fl(\phi_t)([b])=\ai([b],[a])\int_{-r}^r \ell t f(x)dx=0,\]
for any $[b]\in H_1(\Sigma)$.
So each $\phi_t\in\Ham$.
Note that $\phi_1$ is a point-pushing map for $\gamma$.
Further note that $\phi_t(*)=\gamma(t)$ for any $t$.
Since $\gamma$ was arbitrary, we may take $\gamma$ to hit any point on $\Sigma$, so that $\Ham$ acts transitively on~$\Sigma$.
\end{proof}

\begin{lemma}\label{le:unext}
Every crossed homomorphism $F\co \Symps\to H^1(\Sigma;\R)$ that agrees with $\Fl$ on $\Symps\cap\Sympo$ extends uniquely to an extended flux map $\Symp\to H^1(\Sigma;\R)$.
\end{lemma}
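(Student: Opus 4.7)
The plan is to use Lemma~\ref{le:hamtrans} to reduce any element of $\Symp$ to one in $\Symps$ by composing with a Hamiltonian symplectomorphism, and then use the crossed homomorphism property to force the definition of the extension. Specifically, given $\phi\in\Symp$, the transitivity of the $\Ham$--action on $\Sigma$ provides some $\psi\in\Ham$ with $\psi(\phi(*))=*$, so that $\psi\phi\in\Symps$. The extension is then forced to be $\tilde F(\phi):=F(\psi\phi)$, since any extended flux map must vanish on $\Ham$ (as $\Ham\subset\ker\Fl$) and $\Ham\subset\Sympo$ acts trivially on $H^1(\Sigma;\R)$, so the crossed homomorphism identity gives $\tilde F(\psi\phi)=\psi\cdot\tilde F(\phi)+\tilde F(\psi)=\tilde F(\phi)$. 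This forced formula also gives uniqueness.

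Next I would verify the definition is independent of the choice of $\psi$. If $\psi_1,\psi_2\in\Ham$ both conjugate $\phi$ into $\Symps$, then $\psi_2\psi_1^{-1}\in\Ham$ lies in $\Symps$ because it fixes $*$. Applying the crossed homomorphism identity for $F$ to the product $(\psi_2\psi_1^{-1})(\psi_1\phi)=\psi_2\phi$, and using the hypothesis that $F|_{\Symps\cap\Sympo}=\Fl|_{\Symps\cap\Sympo}$ (so $F(\psi_2\psi_1^{-1})=\Fl(\psi_2\psi_1^{-1})=0$) together with the trivial cohomology action of $\psi_2\psi_1^{-1}\in\Sympo$, yields $F(\psi_2\phi)=F(\psi_1\phi)$, as required.

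Then I would verify that $\tilde F$ is itself a crossed homomorphism. The computational trick here is, given $\phi_1,\phi_2\in\Symp$ with chosen $\psi_1,\psi_2\in\Ham$ for which $\psi_i\phi_i\in\Symps$, to write down the combined corrector $\psi_{12}:=\psi_1\phi_1\psi_2\phi_1^{-1}$. One checks $\psi_{12}(\phi_1\phi_2)(*)=*$ and that $\psi_{12}\in\Ham$, using that $\Ham$ is normal in $\Symp$ (because $\Fl(\phi\psi\phi^{-1})=\phi\cdot\Fl(\psi)$ for $\psi\in\Sympo$ and $\phi\in\Symp$). Then $\psi_{12}\phi_1\phi_2=(\psi_1\phi_1)(\psi_2\phi_2)$, and applying the crossed homomorphism identity for $F$ along with the fact that $\psi_1\in\Sympo$ acts trivially on $H^1(\Sigma;\R)$ collapses the formula to $\phi_1\cdot\tilde F(\phi_2)+\tilde F(\phi_1)$. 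Finally, for $\phi\in\Sympo$ and any valid $\psi\in\Ham$, the product $\psi\phi$ lies in $\Symps\cap\Sympo$, where $F$ equals $\Fl$, so $\tilde F(\phi)=F(\psi\phi)=\Fl(\psi\phi)=\Fl(\phi)$, showing $\tilde F$ extends $\Fl|_{\Sympo}$.

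The main subtlety I anticipate is verifying normality of $\Ham$ in $\Symp$ cleanly and confirming that $\Sympo$ (not just $\Sympso$) acts trivially on $H^1(\Sigma;\R)$ — the latter is immediate since elements of $\Sympo$ are isotopic to the identity through diffeomorphisms, so they act trivially on singular (co)homology, but it is worth stating explicitly because the hypothesis distinguishes $\Symps\cap\Sympo$ from $\Sympso$. Everything else is a bookkeeping exercise in the crossed homomorphism identity, and the key conceptual input is Lemma~\ref{le:hamtrans}, which is what makes the reduction to $\Symps$ possible.
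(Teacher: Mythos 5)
Your proposal is correct and follows essentially the same route as the paper: define $\tilde F(\phi)=F(\psi\phi)$ for $\psi\in\Ham$ with $\psi(\phi(*))=*$ (using Lemma~\ref{le:hamtrans}), check independence of $\psi$, verify the crossed homomorphism identity via the same corrector $\psi_1\phi_1\psi_2\phi_1^{-1}$, and derive uniqueness from the forced formula. Your well-definedness computation, factoring $\psi_2\phi=(\psi_2\psi_1^{-1})(\psi_1\phi)$ and using $F(\psi_2\psi_1^{-1})=\Fl(\psi_2\psi_1^{-1})=0$, is a slightly cleaner rearrangement of the paper's identical idea.
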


\begin{proof}
For each $\phi\in \Symp$,
pick some $\psi_\phi\in \Ham$ sending $\phi(*)$ to $*$ (this is possible by Lemma~\ref{le:hamtrans}).
Then $\psi_\phi\phi\in \Symps$.
Define $\tilde F(\phi)=F(\psi_\phi\phi)$.
Note that $\tilde F(\phi)$ is well defined: if $\psi_1$ and $\psi_2$ both send $\phi(*)$ to $*$, then
\[\begin{split}
F(\psi_1\phi)-F(\psi_2\phi)&=
F(\psi_1\phi)+(\psi_2\phi)\cdot F(\phi^{-1}\psi_2^{-1})\\
&=F(\psi_1\phi)+(\psi_1\phi)\cdot F(\phi^{-1}\psi_2^{-1})\\
&=F(\psi_1\phi\phi^{-1}\psi_2^{-1})=\Fl(\psi_1\psi_2^{-1})=0.\\
\end{split}\]

Further, $\tilde F$ is a crossed homomorphism.
For $\phi_1,\phi_2\in \Symp$ and  $\psi_1,\psi_2\in \Ham$ with $\psi_i\phi_i\in \Symps$ for $i=1,2$, we have
\[\tilde F(\phi_1\phi_2)
=F((\psi_1\phi_1\psi_2\phi_1^{-1})\phi_1\phi_2)
=F(\psi_1\phi_1\psi_2\phi_2)\]
since $\psi_1\phi_1\psi_2\phi_1^{-1}\in \Ham$ sends $\phi_1\phi_2(*)$ to $*$.
So $\tilde F(\phi_1\phi_2)=\phi_1\cdot \tilde F(\phi_2)+\tilde F(\phi_1)$.

Now suppose $\tilde F'\co\Symp\to H^1(\Sigma;\R)$ is another crossed homomorphism extending $F$.
Let $\phi\in \Symp$ and let $\psi\in\Ham$ with $\psi\phi\in \Symps$.
Then
\[\begin{split}
\tilde F'(\phi)&=\tilde F'(\psi)+\psi\cdot\tilde F'(\phi)\\
&=\tilde F'(\psi\phi)=F(\psi\phi)=\tilde F(\phi).
\end{split}\]

Finally, we note that for $\phi\in\Sympo$, we have $\tilde F(\phi)=\Fl(\phi)$, so $\tilde F$ is an extended flux map.
\end{proof}

\begin{lemma}\label{le:sympssympogens}
The group $\Symps\cap\Sympo$ is generated by the union of $\Sympso$ and a finite set of point-pushing maps in $\Ham$.
\end{lemma}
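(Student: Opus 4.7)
The plan is to show $(\Symps\cap\Sympo)/\Sympso\cong\pi_1(\Sigma,*)$ via a trace homomorphism, and then lift a finite generating set of $\pi_1(\Sigma,*)$ to point-pushing maps in $\Ham$. For $\phi\in\Symps\cap\Sympo$, I pick any symplectic isotopy $\phi_t$ in $\Sympo$ from the identity to $\phi$ and set $\tau(\phi):=[t\mapsto\phi_t(*)]\in\pi_1(\Sigma,*)$; this is a based loop because $\phi_0(*)=\phi_1(*)=*$. By Remark~\ref{re:surfflux}, $\Sympo$ is simply connected, so any two such isotopies cobound a homotopy in $\Sympo$ whose evaluation at $*$ is a based homotopy between the two traces; hence $\tau(\phi)$ is independent of the chosen isotopy. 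A routine square-of-isotopies check shows that $\tau$ is a group homomorphism.

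The heart of the argument is identifying $\ker\tau$ with $\Sympso$. The inclusion $\Sympso\subseteq\ker\tau$ is immediate, since an isotopy in $\Sympso$ has the constant trace $*$. For the converse, suppose $\tau(\phi)=1$, so that the trace $\gamma(t)=\phi_t(*)$ is null-homotopic in $\Sigma$; I aim to construct a smooth family $\psi_t\in\Ham$ with $\psi_0=\psi_1=\mathrm{id}$ and $\psi_t(\gamma(t))=*$ for all $t$, so that $\psi_t\phi_t$ becomes an isotopy in $\Sympso$ connecting the identity to $\phi$. The family $\psi_t$ is built by extending the null-homotopy of $\gamma$ to a smooth map $D^2\to\Sigma$ and, at each parameter value $t$, integrating a compactly supported Hamiltonian vector field that drags $\gamma(t)$ back to $*$ along a radius of the disk; this is precisely the ``push along a curve'' technique used in the proof of Lemma~\ref{le:hamtrans}. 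Alternatively, the identification $\pi_0(\Symps\cap\Sympo)\cong\pi_1(\Sigma,*)$ can be packaged as the connecting map in the long exact sequence of the evaluation fibration $\mathrm{ev}_*\co\Sympo\to\Sigma$ with fiber $\Symps\cap\Sympo$, combined with $\pi_0(\Sympo)=\pi_1(\Sympo)=0$.

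To conclude, $\pi_1(\Sigma,*)$ is generated by finitely many classes of simple closed loops at $*$ (for instance, loops based at $*$ and freely homotopic to the basis representatives $x_1,\ldots,x_g,y_1,\ldots,y_g$). For each such loop $\gamma$, Lemma~\ref{le:hamtrans} supplies a Hamiltonian isotopy $\psi_t^\gamma\in\Ham$ with $\psi_t^\gamma(*)=\gamma(t)$; its time-one map is a point-pushing map in $\Ham\cap\Symps\cap\Sympo$ whose $\tau$-image is $[\gamma]$. These finitely many point-pushing maps, together with $\Sympso$, then generate $\Symps\cap\Sympo$. The main obstacle is the kernel computation: arranging the corrective family $\psi_t$ to depend smoothly on $t$ and to be the identity at the endpoints demands care, and the cleaner fibration approach requires verifying local triviality of $\mathrm{ev}_*$, which is standard for smooth symplectic group actions but still reduces to a construction of local sections from compactly supported Hamiltonian flows.
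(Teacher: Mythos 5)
Your argument is correct, but it reaches the key isomorphism $(\Symps\cap\Sympo)/\Sympso\cong\pi_1(\Sigma,*)$ by a different route than the paper. The paper composes $\Symps\cap\Sympo\to\ptM\to\clM$: the composite is trivial, so the image in $\ptM$ lies in the point-pushing copy of $\pi_1(\Sigma,*)$ from the Birman exact sequence (Theorem~\ref{th:birmanexact}); the kernel of $\Symps\cap\Sympo\to\ptM$ is identified with $\Sympso$, and surjectivity onto $\pi_1(\Sigma,*)$ comes from Lemma~\ref{le:hamtrans}, after which the finite-generation step is identical to yours. You instead work entirely inside the symplectomorphism group, via the trace map $\tau$ (equivalently, the connecting map of the evaluation fibration $\mathrm{ev}_*\co\Sympo\to\Sigma$) together with the simple connectivity of $\Sympo$ from Remark~\ref{re:surfflux}. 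The paper's route outsources the point-set topology to the quoted Birman sequence and to the standard identification of $\ker(\Symps\to\ptM)$ with $\Sympso$; yours is more self-contained but obliges you to carry out the corrective Hamiltonian family (or verify local triviality of $\mathrm{ev}_*$), which you rightly flag as the delicate step, and which is essentially a re-proof of Birman's theorem in the symplectic category. One small point to watch: $\tau(\phi\psi)$ is the diagonal of the square $(s,t)\mapsto\phi_s(\psi_t(*))$, which is homotopic to the concatenation of $\tau(\phi)$ with $\phi_*\tau(\psi)$, and $\phi_*$ is conjugation by $\tau(\phi)$; so $\tau$ is a homomorphism or an anti-homomorphism depending on composition conventions. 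This is harmless for the generation statement (the fibers of $\tau$ are still cosets of $\ker\tau=\Sympso$, and your point-pushes still hit a generating set of $\pi_1(\Sigma,*)$), but it is not quite as ``routine'' as stated.
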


\begin{proof}
The group $\Symps\cap\Sympo$ maps to $\ptM$ via the map $\Symps\to\ptM$.
The kernel of this map is $\Sympso$.
However, the composition $\Symps\cap\Sympo\to\ptM\to\clM$ is trivial.
Therefore $\Symps\cap\Sympo$ maps to the kernel of the Birman exact sequence (from Theorem~\ref{th:birmanexact}), which is the copy of $\pi_1(\Sigma,*)$ in $\ptM$ generated by the mapping classes of point-pushing maps along simple closed curves.
Since $\Symps\cap\Sympo$ contains point-pushing maps along all simple closed curves (by Lemma~\ref{le:hamtrans}), we have an exact sequence
\[1\to\Sympso\to\Symps\cap\Sympo\to\pi_1(\Sigma,*)\to 1.\]
Then $\Symps\cap\Sympo$ is generated by $\Sympso$ together with lifts of a finite generating set for $\pi_1(\Sigma,*)$, which we take to be Hamiltonian point-pushing maps by Lemma~\ref{le:hamtrans}.
\end{proof}

\begin{lemma}\label{le:pushmap}
Let $s$ be any section as in the definition of $\Flsec{s}$.
Let $\phi\in\Ham$ be a Hamiltonian point-pushing map around a simple closed curve~$a$.
Then for any $[b]\in H_1(\Sigma)$,
\[\Flsec{s}(\phi)([b])=g\cdot \ai([a],[b]).\]
\end{lemma}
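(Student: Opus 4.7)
The plan is to exploit the freedom in choosing $s$, reduce to a specific Hamiltonian isotopy, and then account for the basepoint by identifying the integer multiple of $[\Sigma]$ that differentiates a $2$--chain in $\Sigma$ from one in $\Sigma\setminus\{*\}$ with the same boundary.

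First, since $\phi\in\Ham$ acts trivially on $H_1(\Sigma)$, we have $\phi\in\ISymps$, so by Proposition~\ref{pr:seccobound} the value $\Flsec{s}(\phi)$ is independent of $s$. A short crossed-homomorphism calculation using that $\Flsec{s}=\Fl=0$ on $\Sympso\cap\Ham$ further shows that $\Flsec{s}(\phi)$ depends only on the class of $\phi$ in $\ptM$: for any two Hamiltonian point-pushes $\phi_1,\phi_2$ along $a$ based at $*$, their difference $\phi_1\phi_2^{-1}$ lies in $\Sympso\cap\Ham$, has vanishing $\Flsec{s}$, and acts trivially on $H^1(\Sigma;\R)$. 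Thus we may replace $\phi$ by the specific Hamiltonian point-pushing symplectomorphism $\phi=\phi_1$ constructed in the proof of Lemma~\ref{le:hamtrans}, coming from a Hamiltonian isotopy $\phi_t$ with $\phi_t(*)=\gamma(t)$ for $\gamma$ parametrizing $a$. By $\R$--linearity in $[b]$, it is enough to treat the case where $c:=s([b])$ is a smooth simple closed curve in $\Sigma\setminus\{*\}$ meeting $a$ transversely.

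Next, define $K\co S^1\times[0,1]\to\Sigma$ by $K(x,t)=\phi_{1-t}^{-1}(c(x))$ and set $C'=K_*T'$. Then $\partial C' = s([b]) - \phi^{-1}_*s([b])$, which is exactly the boundary required of a chain $C$ computing $\Flsec{s}(\phi)([b])$; and since $\phi$ is Hamiltonian, $\int_{C'}\omega_\Sigma=0$. The chain $C'$ is generally not supported in $\Sigma\setminus\{*\}$: $K^{-1}(*)$ consists of the finitely many pairs $(x_i,t_i)$ with $c(x_i)=\gamma(t_i)$, one per transverse intersection of $c$ with $a$. Obtain $C\in C_2(\Sigma\setminus\{*\})$ from $C'$ by excising a small disk around each $(x_i,t_i)$ and refilling inside $\Sigma\setminus\{*\}$; then $C-C'$ is a $2$--cycle in $\Sigma$ representing $k[\Sigma]\in H_2(\Sigma)$ for some integer $k$, giving
\[\Flsec{s}(\phi)([b]) = \int_C\omega_\Sigma = \int_{C'}\omega_\Sigma + k\cdot\mathrm{Area}(\Sigma,\omega_\Sigma) = k\cdot g.\]

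Each excision contributes $\pm 1$ to $k$ according to the local degree of $K$ at $(x_i,t_i)$. Because $dK$ at such a point decomposes into the tangent vector of $c$ and (the image under $(\phi_{1-t_i}^{-1})_*$ of) the tangent vector of $\gamma$, this local degree agrees with the intersection sign of $c$ and $a$ at $p_i = c(x_i)$; summing yields $k=\ai([a],[b])$ up to a universal sign. The main technical obstacle is pinning down this overall sign, which I would resolve by a direct coordinate computation in the annular chart around $a$ used in the proof of Lemma~\ref{le:hamtrans}, where $\phi_t$ is an explicit vertical shear and the excision fill is forced by the requirement that $C$ avoid $*$ in the prescribed way.
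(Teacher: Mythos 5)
Your proposal is correct, and it shares the paper's setup (replace $\phi$ by the explicit Hamiltonian push from Lemma~\ref{le:hamtrans} using that the discrepancy lies in $\Ham\cap\Sympso$, and use the $s$--independence from Proposition~\ref{pr:seccobound} to work with convenient cycles), but the final computation is genuinely different. The paper reduces to a single basis curve $b_1$ meeting $a$ exactly once and performs an explicit chain surgery inside the annular support of $\phi$: it builds $D_0$ by dragging the arc $b_1\cap\supp\phi$ along the isotopy (area $0$ since $\phi\in\Ham$), then rewraps that arc the other way around the annulus to produce a chain avoiding $*$ that differs from $D_0$ by a fundamental class of $\Sigma$, whence the value $g$. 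You instead keep an arbitrary transverse representative, take the full track $C'=K_*T'$ of $s([b])$ under the reversed isotopy (again area $0$ by the Hamiltonian condition), and identify the correction needed to push $C'$ off the basepoint as a signed count of the points of $K^{-1}(*)$, i.e.\ a local degree computation at the transverse intersections of $s([b])$ with $a$. Your route is more conceptual and handles all intersection points at once where the paper reduces to one; its one deferred step, the overall sign, does check out: $D_{(x_i,t_i)}K$ carries the coordinate frame to $(\phi_{1-t_i}^{-1})_*\bigl(c'(x_i),\gamma'(1-t_i)\bigr)$, so the local degree is the local contribution to $\ai([b],[a])$, and since excising a disk of local degree $d$ and refilling in $\Sigma\setminus\{*\}$ changes the chain by $-d[\Sigma]$, you get $k=-\ai([b],[a])=\ai([a],[b])$ as required. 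The only other point worth making explicit is that the reduction to a simple closed transverse representative of a basis class uses both the $s$--independence on $\ISymps$ and the linearity of $\Flsec{s}(\phi)$ in $[b]$, which you do invoke.
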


\begin{proof}
We assume that $\phi$ is the map constructed in the proof of Lemma~\ref{le:hamtrans}.
The difference of that map and any other point-pushing map along $a$ is in $\Ham\cap\Sympso$ (on which $\Flsec{s}$ is trivial for any $s$), so we may assume this without loss of generality.
We pick a set of basis representatives $b_1,\ldots,b_{2g}$ for $H_1(\Sigma)$ such that $\ai([a],[b_1])=1$, $b_1$ is a simple closed curve intersecting $a$ transversely at a single point, and $b_i$ does not intersect $\supp \phi$ for $i\neq 1$.
We also demand that $*$ does not lie on any $b_i$.
Let $s$ be the section sending $[b_i]$ to $b_i$; since $\phi\in\ISymp$, $\Flsec{s}(\phi)$ does not depend on $s$.
Then it is enough to show that $\Flsec{s}(\phi)([b_1])=g$; since we easily have $\Flsec{s}(\phi)([b_i])=0$ for $i\neq 1$ the result will then follow by linearity.

As in the proof of Lemma~\ref{le:hamtrans}, the point-pushing map $\phi$ is supported on an annulus that we model as the annulus $A=[-r,r]\times([0,\ell]/\sim)$, and we model $\phi$ as $(x,y)\mapsto(x,y+\ell f(x))$, where $f\co[-r,r]\to\R$ is a smooth function satisfying certain properties.
The point $(0,0) \in A$ maps to the basepoint $*\in\Sigma$.
Since we are free to choose a different $b_1$ in the same homology class, we demand that $b_1$ intersects the support of $\phi$ on the image of the segment $t\mapsto (t,\ell/2)$ in $A$.
Let the arc $c$ be the intersection of $b_1$ with the support of $\phi$.
Recall that $\phi=\phi_1$ of a homotopy $\phi_t$ from $\mathrm{id}$ to $\phi$, supported on the same annulus for all $t$.
Let $D_0\in C_2(\Sigma)$ be the chain defined by dragging $c$ along this homotopy, with $\partial D_0=\phi_*c-c$;
 specifically, $D_0$ is the push-forward of a fundamental domain for $[-r,r]\times[0,1]$ relative to its boundary under the map $(x,t)\mapsto (x,\ell/2+t\ell f(x))$ to $A$, followed by the inclusion $A\to \Sigma$.
Since $\phi\in\Ham$, we have $\int_{D_0}\omega_\Sigma=0$.

However, one can also wrap $c$ around $A$ in the opposite direction.
Let $H\co [-r,r]\times[0,1]\to A$ send $(x,t)\mapsto (x,-\ell/2+ (1-t)\ell f(x)+\ell t)$.
Let $D_1\in C_2(\Sigma)$ be the push-forward under the composition $H\co [-r,r]\times[0,1]\to A\to \Sigma$ of a fundamental class for the domain of $H$, relative to its boundary, such that $D_1-D_0$ is a fundamental class for the image of $A$, relative to its boundary.
Then there is a chain $D_2\in C_2(\Sigma)$ such that $D_1-D_0+D_2$ is a fundamental class for $\Sigma$.
Since the basepoint $*$ is not in the image of $H$, it is not in the support of $D_1$ or $D_1+D_2$.
Since $\partial(D_1+D_2)$ is $\phi_*c-c$, the chain $D_1+D_2$ has the same area as a chain $D_3\in C_2(\Sigma\bs\{*\})$ with $\partial D_3=\phi_*b_1-b_1$.
So 
\[\Flsec{s}(\phi)([b_1])=\int_{\phi^{-1}_*D_3}\omega_\Sigma=\int_{D_1+D_2}\omega_\Sigma=\int_{D_1-D_0+D_2}\omega_\Sigma=g.\qedhere\]
\end{proof}

\begin{proof}[Proof of Theorem~\ref{mt:hypvssec}]
Let $\epsilon\co \ptM\to H^1(\Sigma;\R)$ be a crossed homomorphism extending $\Phi\circ\johom$.
By Proposition~\ref{pr:ext}, such maps exist.
Consider the crossed homomorphism
\[
F=\Flsec{s}-\frac{g}{g-1}D_\Sigma^{-1}\circ \epsilon\circ p\co\Symps\to H^1(\Sigma;\R).
\]
Let $\phi$ be a Hamiltonian point-pushing map along a based simple closed curve $a$.
By Corollary~\ref{co:cjhvalue}, we know
\[\frac{g}{g-1}D_\Sigma^{-1}\circ \Phi\circ \johom\circ p(\phi)([b])=g\cdot \ai([a],[b]),\]
and by Lemma~\ref{le:pushmap}, we know $\Flsec{s}(\phi)([b])$ also equals $g\cdot \ai([a],[b])$ for any $[b]\in H_1(\Sigma)$.
So $F$ agrees with $\Fl$ on any Hamiltonian point-pushing map $\phi$.
We proved in Section~\ref{ss:flsec} that $\Flsec{s}$ agrees with $\Fl$ on $\Sympso$.
Of course, $\johom\circ p$ is trivial on $\Sympso$ and therefore $F$ agrees with $\Fl$ on $\Sympso$.
Then by Lemma~\ref{le:sympssympogens}, $F$ agrees with $\Fl$ on a generating set for $\Sympo\cap\Symps$; since it is a crossed homomorphism, $F$ agrees with $\Fl$ on $\Symps\cap\Sympo$.
Then by Lemma~\ref{le:unext}, 
 $F$ extends to an extended flux map $\tilde F$ on $\Symp$.
By Lemma~\ref{le:unISymp}, all such maps have the same restriction to $\ISymps$, and the theorem follows from the definition of $F$.
\end{proof}

\subsection{The second difference theorem}
\label{ss:pfjacvssec}
The goal of this section is to prove Theorem~\ref{mt:jacvssec}.
As in Section~\ref{sss:basisreps}, $\{x_i,y_i\}$ are $1$-cycles representing a symplectic basis.
Let $s\co H_1(\Sigma)\to Z_1(\Sigma\bs\{*\})$ send each $[x_i]$ to $x_i$ and $[y_i]$ to $y_i$.
We also fix an Abel--Jacobi map $J$.
Since Theorem~\ref{mt:jacvssec} concerns only the restrictions of $\Flsec{s}$ and $\Flmet{J}$ to $\ISymps$, it follows from Proposition~\ref{pr:seccobound} and Proposition~\ref{pr:jaccobound} the choices of $s$ and $J$ do not matter.

In this section, we fix $\phi\in\ISymps$.
As in the definition of the Johnson homomorphism (Definition~\ref{de:thirddefn}), let $M=M_\phi=\Sigma\times[0,1]/\mathord{\sim}$, where $(p,0)\sim(\phi(p),1)$.
Also as in that definition, we choose a map $K\co M\to X$, specified by a homotopy $\bar K\co \Sigma\times[0,1]\to X$ from $J$ to $J\circ \phi$.

We construct cycles in $Z_2(M)$ that are related to the difference of $\Flsec{s}$ and $\Fljac{J}$.
Momentarily fix an index $i$.
Let $\gamma\co S^1\to \Sigma$ be a loop and let $T\in Z_1(S^1)$ be a representative of the fundamental class of $S^1$, such that $s([x_i])=\gamma_*T$.
The map $\bar K\circ (\phi^{-1}\times \mathrm{id})$ is a homotopy from $J\circ \phi^{-1}$ to $J$.
Let $T'\in C_2(S^1\times[0,1])$ be a representative of the fundamental class of $S^1\times[0,1]$, relative to its boundary, such that:
\[\partial (\bar K_*(\phi^{-1}\times \mathrm{id})_*(\gamma\times \mathrm{id})_*T')=J_*\gamma_*T-J_*\phi^{-1}_*\gamma_*T.\]
Then $\Fljac{J}(\phi)([x_i])=\int_{\bar K_*(\phi^{-1}\times \mathrm{id})_*(\gamma\times \mathrm{id})_*T'}\omega_X$.
Let $f\co S^1\times [0,1]\to M$ be the map induced by $(\phi^{-1}\circ \gamma)\times \mathrm{id}$.
Let $C\in C_2(\Sigma\bs\{*\})$ be a chain bounding $s([x_i])-\phi_*^{-1}s([x_i])$, so that $\Flsec{s}(\phi)([x_i])=\int_{C}\omega_\Sigma$.
The following cycle is important to our argument:
 \[C_i=f_*T'-(i_0)_*C\in Z_2(M).\]
Note that $C_i$ is in $Z_2(M)$ because $\partial f_*T'=(i_0)_*s([x_i])-(i_0)_*\phi_*^{-1}s([x_i])$.
Define $D_i$ the same way, but with $x_i$ replaced by $y_i$.
\begin{lemma}
For each $i$, we have
\[\int_{C_i}\omega_M=\Fljac{J}(\phi)([x_i])-\Flsec{s}(\phi)([x_i]), \quad\text{and}\]
\[\int_{D_i}\omega_M=\Fljac{J}(\phi)([y_i])-\Flsec{s}(\phi)([y_i]).\]
\end{lemma}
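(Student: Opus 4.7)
The proof is essentially a bookkeeping argument: both integrals arise by pushing forward the two pieces of $C_i$ via $K\co M\to X$ and recognizing the defining integrals of $\Fljac{J}$ and $\Flsec{s}$.

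First I would make explicit the convention that $\omega_M := K^*\omega_X$, so that $\int_{C_i}\omega_M$ is literally $\int_{K_*C_i}\omega_X$. Splitting along $C_i = f_*T' - (i_0)_*C$ reduces the problem to computing the two integrals
\[\int_{f_*T'}K^*\omega_X \qquad\text{and}\qquad \int_{(i_0)_*C}K^*\omega_X\]
separately and checking that the first equals $\Fljac{J}(\phi)([x_i])$ while the second equals $\Flsec{s}(\phi)([x_i])$.

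For the first integral, I would unwind the definition of $f$: since $f$ is induced by $(\phi^{-1}\circ\gamma)\times\mathrm{id}\co S^1\times[0,1]\to\Sigma\times[0,1]$ followed by the quotient to $M$, and since by construction of $M$ the map $K$ restricts on $\Sigma\times[0,1]$ to $\bar K$, we have $K\circ f = \bar K\circ(\phi^{-1}\times\mathrm{id})\circ(\gamma\times\mathrm{id})$. Therefore
\[\int_{f_*T'}K^*\omega_X = \int_{\bar K_*(\phi^{-1}\times\mathrm{id})_*(\gamma\times\mathrm{id})_*T'}\omega_X,\]
which is exactly the expression for $\Fljac{J}(\phi)([x_i])$ given just above the lemma.

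For the second integral, I would use that $i_0\co\Sigma\to M$ is the time-zero inclusion, so $K\circ i_0 = J$ by construction of $K$. Hence
\[\int_{(i_0)_*C}K^*\omega_X = \int_{J_*C}\omega_X = \int_{C}J^*\omega_X = \int_C \omega_\Sigma,\]
using the established fact (from the convention fixed at the end of Section~\ref{ss:jacobianprelim}) that $J\co(\Sigma,\omega_\Sigma)\to(X,\omega_X)$ is symplectic. By choice of $C$, this last integral is $\Flsec{s}(\phi)([x_i])$. Combining the two computations gives the first formula; the argument for $D_i$ is identical after replacing $x_i$ by $y_i$ throughout. There is no substantive obstacle here: the content of the lemma is simply that the cycle $C_i$ has been set up precisely to package the two definitions into a single integral over $M$.
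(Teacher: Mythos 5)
Your proof is correct and is essentially the same computation the paper gives: split $\int_{C_i}\omega_M$ along $C_i=f_*T'-(i_0)_*C$, identify the first term with the defining integral of $\Fljac{J}(\phi)([x_i])$ via $K\circ f=\bar K\circ(\phi^{-1}\times\mathrm{id})\circ(\gamma\times\mathrm{id})$, and the second with $\Flsec{s}(\phi)([x_i])$ via $K\circ i_0=J$ and $J^*\omega_X=\omega_\Sigma$. The paper's proof is just a terser version of exactly this bookkeeping.
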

\begin{proof}
This is a computation:
\begin{align*}
\int_{C_i}\omega_M&=\int_{f_*T'}\omega_M-\int_{(i_0)_*C}\omega_M\\
&=\int_{\bar K_*(\phi^{-1}\times \mathrm{id})_*(\gamma\times \mathrm{id})_*T'}\omega_X-\int_{C}\omega_\Sigma=\Fljac{J}(\phi)([x_i])-\Flsec{s}(\phi)([ x_i]).
\end{align*}
The second statement is similar.
\end{proof}

We proceed to compute the Poincar\'e duals of $\{[C_i],[D_i]\}_i$.
For clarity in the computations in this section, we use $\angb{,}$ to denote the evaluation pairing between cohomology and homology; for $[\alpha]\in H^k(M;\R)$ represented by a $k$--form $\alpha$ and $[c]\in H_k(M)$ represented by a piecewise-smooth singular cycle, we have
\[\angb{[\alpha],[c]}=\int_c\alpha.\]
Let $D_M\co H^k(M;\R)\to H_{3-k}(M;\R)$ be the Poincar\'e duality isomorphism.
We denote the fundamental classes of $M$ and $\Sigma$ by $[M]$ and $[\Sigma]$, respectively.
Recall the defining property of $D_M$: for $[\alpha]\in H^k(M)$ and $[\beta]\in H^{3-k}(M)$, we have
\begin{equation}\label{eq:pdprop}\angb{[\alpha],D_M([\beta])}=\angb{[\alpha\wedge\beta],[M]}.\end{equation}

There is a product on $H_*(M)$ given by oriented transverse intersections of representative cycles (since $M$ is $3$--dimensional, every homology class has a representative that is an embedded submanifold).
It is well known (see for example Bredon~\cite{br}, p.~367) that Poincar\'e duality intertwines this product with the wedge product on cohomology:
\begin{equation}\label{eq:intertwineprods}D_M([\alpha\wedge\beta])=D_M([\alpha])\cap D_M([\beta])\end{equation}
for any $[\alpha],[\beta]\in H^*(M)$.

Recall the $1$-forms $\tilde\alpha_1,\ldots,\tilde \alpha_g,\tilde\beta_1,\ldots,\tilde\beta_g$ from Section~\ref{sss:jacconventions}.
Let $\hat \alpha_i=K^*\tilde\alpha_i$, $\hat \beta_i=K^*\tilde\beta_i$ and let $\omega_M=K^*\omega_X$.
Note that the second-coordinate map $\Sigma\times[0,1]\to[0,1]$ induces a map $M\to S^1$; let $\theta$ be a $1$--form on $M$ that is the pullback of a representative of the orientation class in $H^1(S^1)$.

As in Section~\ref{sss:jacconventions}, the $1$--forms $\{\alpha_i,\beta_i\}_i$ are the pullbacks via $J$ of the forms $\{\tilde\alpha_i,\tilde\beta_i\}_i$, and are evaluation-dual to $\{x_i,y_i\}_i$.
We have $i_0\co\Sigma\to M$ induced from the time-zero inclusion $\Sigma\to\Sigma\times[0,1]$.
Note that $K\circ i_0=J$.
Let $\hat x_j=(i_0)_*x_j$, $\hat y_j=(i_0)_*y_j$ be the cycles on $M$.
We map $[0,1]\to \Sigma\times [0,1]$ by $t\mapsto (*,t)$; since $\phi$ fixes $*$ this defines a map $S^1\to M$.
Let $z\in Z_1(M)$ be the push-forward of a representative of the fundamental class of $S^1$ along this map.

\begin{lemma}\label{le:H1basis}
The set $\{[\hat \alpha_1],\ldots,[\hat \alpha_g],[\hat \beta_1],\ldots,[\hat \beta_g],[\theta]\}$ is a minimal generating set for $H^1(M)$, 
and the set $\{[\hat x_1],\ldots,[\hat x_g],[\hat y_1],\ldots,[\hat y_g],[z]\}$ is a minimal generating set for $H_1(M)$, both of which are torsion-free.
\end{lemma}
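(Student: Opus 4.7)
The plan is to prove the lemma in three stages: first establish the ranks and torsion-freeness of $H_*(M;\Z)$ via the Wang sequence, then identify the homology set as a basis using the structure of that sequence, and finally derive the cohomology basis from a direct evaluation-pairing computation.

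First, I would apply the Wang exact sequence for the mapping torus $M = M_\phi$, viewed as the total space of a fiber bundle $\Sigma \hookrightarrow M \to S^1$ with monodromy $\phi$:
\[\cdots \to H_n(\Sigma;\Z) \xrightarrow{\mathrm{id}-\phi_*} H_n(\Sigma;\Z) \to H_n(M;\Z) \to H_{n-1}(\Sigma;\Z) \xrightarrow{\mathrm{id}-\phi_*} H_{n-1}(\Sigma;\Z) \to \cdots\]
Since $\phi \in \ISymps$, by definition it acts trivially on integral homology, so every map $\mathrm{id}-\phi_*$ vanishes. The long exact sequence therefore decomposes into short exact sequences $0 \to H_n(\Sigma;\Z) \to H_n(M;\Z) \to H_{n-1}(\Sigma;\Z) \to 0$; because $H_*(\Sigma;\Z)$ is free abelian, these split and $H_*(M;\Z)$ is torsion-free in every degree. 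For $n=1$ this yields rank $2g+1$, and by universal coefficients (using that $H_0(M;\Z)$ is free) $H^1(M;\Z) \cong \Hom(H_1(M;\Z),\Z)$ is also free of rank $2g+1$.

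Second, I would pick out the homology basis from the structure of the splitting. The map $(i_0)_* : H_1(\Sigma;\Z) \to H_1(M;\Z)$ sends the basis $[x_j], [y_j]$ to $[\hat x_j], [\hat y_j]$, so these classes generate the image of $(i_0)_*$, which is a direct summand of rank $2g$. The Wang connecting homomorphism $H_1(M;\Z) \to H_0(\Sigma;\Z) \cong \Z$ records the degree of a $1$--cycle under the fibration projection $M \to S^1$, and by construction $z$ projects to a degree-one loop. Hence $[z]$ lifts a generator of the quotient $H_0(\Sigma;\Z)$, and together with the $[\hat x_j], [\hat y_j]$ it forms a $\Z$--basis of $H_1(M;\Z)$ by the splitting lemma.

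Third, I would assemble the cohomology basis by computing the full $(2g+1)\times(2g+1)$ evaluation matrix between the proposed sets. Using $K \circ i_0 = J$ together with Lemma~\ref{le:pullbacks} and the evaluation-duality of $\{\alpha_i,\beta_i\}$ with $\{x_j,y_j\}$, I obtain $\angb{[\hat \alpha_i],[\hat x_j]} = \int_{x_j}\alpha_i = \delta_{ij}$ and $\angb{[\hat \alpha_i],[\hat y_j]}=0$, with the analogous identities for the $\hat \beta_i$. Since the homotopy $\bar K$ is relative to $*$, the composition $K \circ z$ is constant at $0 \in X$, forcing $\angb{[\hat \alpha_i],[z]}=\angb{[\hat \beta_i],[z]}=0$. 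Because $\theta$ is pulled back from $S^1$ and each $\hat x_j,\hat y_j$ lies in the single fiber $i_0(\Sigma)$, the pairings $\angb{[\theta],[\hat x_j]}$ and $\angb{[\theta],[\hat y_j]}$ vanish, while $\angb{[\theta],[z]}=1$ by construction of $z$ as a section over $S^1$. The resulting evaluation matrix is the identity of size $2g+1$, so under the universal coefficient isomorphism $\{[\hat \alpha_i],[\hat \beta_i],[\theta]\}$ is precisely the dual basis of the established homology basis, completing the proof. There is no serious obstacle; the only care point is ensuring $z$ descends to a well-defined loop in $M$ (which uses $\phi(*)=*$) and that $K \circ z$ is constant (which uses that $\bar K$ is a based homotopy), after which everything reduces to the rank calculation and the duality matrix check.
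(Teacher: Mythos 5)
Your proof is correct and follows essentially the same route as the paper: the paper invokes the degeneration of the Serre spectral sequence of the fibration $\Sigma\to M\to S^1$ (which, for a bundle over $S^1$ with trivial monodromy, is exactly your Wang-sequence computation) and then asserts the identification of the generators. Your explicit evaluation-matrix check and the identification of $[z]$ with a generator of the Wang quotient simply fill in the details the paper leaves to the reader.
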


\begin{proof}
Since $\phi$ acts trivially on $H_1(\Sigma)$, the spectral sequences for the cohomology and homology of $M$ from the fibration $\Sigma\to M\to S^1$ degenerate into K\"unneth formulas.
The lemma follows.
\end{proof}

\begin{lemma}\label{le:inttheta}
For $\alpha$ a closed $2$--form on $M$, 
we have 
\[\int_M\alpha\wedge\theta=\int_\Sigma(i_0)^*\alpha.\]
\end{lemma}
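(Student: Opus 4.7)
The plan is to use the Poincar\'e duality identity~(\ref{eq:pdprop}) from the excerpt, together with the identification
\[D_M([\theta])=(i_0)_*[\Sigma]\in H_2(M;\R).\]
Given this identification, taking $[\beta]=[\theta]\in H^1(M;\R)$ in~(\ref{eq:pdprop}) yields
\[\int_M\alpha\wedge\theta=\angb{[\alpha\wedge\theta],[M]}=\angb{[\alpha],(i_0)_*[\Sigma]}=\angb{i_0^*[\alpha],[\Sigma]}=\int_\Sigma(i_0)^*\alpha,\]
which is the desired formula. So the only real content is the claim $D_M([\theta])=(i_0)_*[\Sigma]$.

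To justify this, let $\pi\co M\to S^1$ denote the natural projection induced by the second-coordinate map $\Sigma\times[0,1]\to[0,1]$, so that by definition of $\theta$ we have $\theta=\pi^*\eta$ for some closed $1$-form $\eta$ on $S^1$ with $\int_{S^1}\eta=1$. The map $\pi$ exhibits $M$ as a smooth fiber bundle over $S^1$ with fiber $\Sigma$, and the fiber $\pi^{-1}(0)$ agrees with $i_0(\Sigma)$. For any $1$-cycle $\gamma$ in $M$ transverse to this fiber, the evaluation $\int_\gamma\theta$ equals the winding number of $\pi\circ\gamma$ around $S^1$, which is precisely the algebraic intersection number of $\gamma$ with $i_0(\Sigma)$. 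This is exactly the statement that $[\theta]$ is Lefschetz-dual to $(i_0)_*[\Sigma]$, so $D_M([\theta])=(i_0)_*[\Sigma]$.

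Alternatively, one can bypass duality and argue directly via fiber integration: since $\alpha$ is closed and the fibers of $\pi$ are homologous in $M$, the function $t\mapsto\int_{\pi^{-1}(t)}\alpha$ is locally constant, hence constant on the connected base $S^1$; Fubini for the bundle $\pi$ then gives
\[\int_M\alpha\wedge\pi^*\eta=\int_{S^1}\left(\int_{\pi^{-1}(t)}\alpha\right)\eta(t)=\left(\int_{\pi^{-1}(0)}\alpha\right)\cdot\int_{S^1}\eta=\int_\Sigma(i_0)^*\alpha.\]
Either approach is routine; the main thing to check carefully is that the orientation conventions on $[M]$, $[\Sigma]$, and the generator of $H^1(S^1)$ line up so that the constant of proportionality is exactly~$1$, which is automatic from the definition of $\theta$ as pulled back from the \emph{orientation} class.
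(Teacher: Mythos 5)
Your second, ``alternative'' argument is essentially the paper's own proof: the paper observes that $\int_\Sigma(i_t)^*\alpha$ is independent of $t$ because the inclusions $i_t$ are all homotopic and $\alpha$ is closed (Stokes), and then computes the integral on the fundamental domain $\Sigma\times[0,1]$ by Fubini. So that part is correct and matches the paper exactly. Your primary route, via $D_M([\theta])=(i_0)_*[\Sigma]$ and Equation~(\ref{eq:pdprop}), is also sound but deserves a caution: in the paper that duality statement is the first assertion of Lemma~\ref{le:thetaduals}, which is itself \emph{deduced from} Lemma~\ref{le:inttheta}, so quoting it outright would be circular. You avoid this by giving an independent justification (pairing $[\theta]$ against $1$--cycles via the winding number of $\pi\circ\gamma$ and identifying that with the intersection number with the fiber), and since the evaluation pairing between $H^1(M;\R)$ and $H_1(M;\R)$ is nondegenerate, this does pin down $D_M([\theta])$; you are right that the only remaining issue is matching orientation conventions. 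In terms of what each approach buys: the fiber-integration argument is shorter and self-contained, which is why the paper uses it, while your duality argument makes transparent that the lemma is precisely equivalent to the identity $D_M([\theta])=(i_0)_*[\Sigma]$ that is used immediately afterwards.
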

\begin{proof}
Let $i_t\co\Sigma\to M$ be the time--$t$ inclusion.
Since the $i_t$ maps are all homotopic, the integral
$\int_\Sigma (i_t)^*\alpha$
does not depend on $t$ (by Stokes's theorem).
Then we compute the integral on $\Sigma\times[0,1]$ and the result follows by Fubini's theorem.
\end{proof}

\begin{lemma}\label{le:thetaduals}
We have
\begin{gather*}
D_M([\theta]) = (i_0)_*[\Sigma],\\
D_M([\hat \alpha_i\wedge\theta]) = -[\hat y_i], \quad\text{and}\\
D_M([\hat \beta_i\wedge\theta]) = [\hat x_i].
\end{gather*}
\end{lemma}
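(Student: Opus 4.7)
The plan is to verify each identity by appealing to the defining property (\ref{eq:pdprop}) of Poincar\'e duality, using Lemma~\ref{le:inttheta} to push $M$-level integrals down to $\Sigma$. Since $M$ is a compact oriented $3$-manifold, the evaluation pairings between real cohomology and real homology are nondegenerate in each degree (and in the relevant cases both groups are torsion-free by Lemma~\ref{le:H1basis}), so a Poincar\'e dual is uniquely determined by its defining property, and it suffices to check (\ref{eq:pdprop}) on any convenient set of cohomology representatives. For the first identity, Lemma~\ref{le:inttheta} directly gives $\int_M\omega\wedge\theta=\int_\Sigma i_0^*\omega=\angb{[\omega],(i_0)_*[\Sigma]}$ for any closed $2$-form $\omega$ on $M$, which is the defining property of $D_M([\theta])=(i_0)_*[\Sigma]$.

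For the remaining two identities, I would first note that $K\circ i_0=J$, so Lemma~\ref{le:pullbacks} gives $i_0^*\hat\alpha_i=\alpha_i$ and $i_0^*\hat\beta_i=\beta_i$. Applying Lemma~\ref{le:inttheta} to the $2$-form $\omega\wedge\hat\alpha_i$ (for $\omega$ any closed $1$-form on $M$) reduces the identity $D_M([\hat\alpha_i\wedge\theta])=-[\hat y_i]$ to the surface-level statement
\[\int_\Sigma\eta\wedge\alpha_i=-\int_{y_i}\eta\quad\text{for every }[\eta]\in H^1(\Sigma;\R),\]
equivalently $D_\Sigma[\alpha_i]=-[y_i]$. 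The identity for $\hat\beta_i\wedge\theta$ reduces analogously to $D_\Sigma[\beta_i]=[x_i]$.

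The main obstacle is thus the surface-level Poincar\'e-duality computation, whose signs are easy to mishandle. I would verify both formulas by linearity on the basis $\eta\in\{[\alpha_j],[\beta_j]\}$ of $H^1(\Sigma;\R)$, reducing the problem to the four cup-product integrals $\int_\Sigma\alpha_j\wedge\alpha_i$, $\int_\Sigma\alpha_j\wedge\beta_i$, $\int_\Sigma\beta_j\wedge\alpha_i$, $\int_\Sigma\beta_j\wedge\beta_i$. These values are fixed by the evaluation-duality of $\{[\alpha_j],[\beta_j]\}$ with $\{[x_j],[y_j]\}$: under Poincar\'e duality the cup-product pairing on $H^1(\Sigma;\R)$ corresponds to the algebraic intersection form $\ai$ on $H_1(\Sigma;\R)$, so the only nonzero entries are $\int_\Sigma\alpha_i\wedge\beta_i=1$ (as already recorded in the paper) and $\int_\Sigma\beta_i\wedge\alpha_i=-1$. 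The sign in $D_\Sigma[\alpha_i]=-[y_i]$ then reflects the antisymmetry of $\ai$, and substituting these values into the two surface identities completes the verification.
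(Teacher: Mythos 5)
Your proposal is correct and follows essentially the same route as the paper: use Lemma~\ref{le:inttheta} to push the pairings $\int_M\omega\wedge(\cdot)\wedge\theta$ down to $\Sigma$ and then invoke Poincar\'e duality on $\Sigma$ for the evaluation-dual basis, which is exactly what the paper's (much terser) proof asserts. Your sign bookkeeping ($D_\Sigma[\alpha_i]=-[y_i]$, $D_\Sigma[\beta_i]=[x_i]$, with $\int_\Sigma\alpha_i\wedge\beta_i=1$) agrees with the conventions of Equation~(\ref{eq:pdprop}) and with the values the paper records.
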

\begin{proof}
The first statement follows immediately from Lemma~\ref{le:inttheta}.
The second and third statements follow from Lemma~\ref{le:inttheta} and Poincar\'e duality on $\Sigma$.
\end{proof}

\begin{lemma}\label{le:indepH2}
The elements 
\[[\alpha_1\wedge\theta],\ldots,[\alpha_g\wedge\theta],[\beta_1\wedge\theta],
\ldots,[\beta_g\wedge\theta]\]
are linearly independent in $H^2(M;\R)$.
\end{lemma}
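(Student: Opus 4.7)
The plan is to reduce the question to linear independence of the homology classes $\{[\hat x_i], [\hat y_i]\}_{i=1}^g$ in $H_1(M;\R)$ via Poincar\'e duality, and then invoke Lemma~\ref{le:H1basis}.

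First I would set up the relation: suppose we have a linear dependence
\[\sum_{i=1}^g \bigl(a_i[\hat\alpha_i\wedge\theta] + b_i[\hat\beta_i\wedge\theta]\bigr) = 0\]
in $H^2(M;\R)$ for some real coefficients $a_i, b_i$. The goal is to conclude that all $a_i$ and $b_i$ vanish.

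Next I would apply the Poincar\'e duality isomorphism $D_M\co H^2(M;\R)\to H_1(M;\R)$ and use Lemma~\ref{le:thetaduals}, which computes $D_M([\hat\alpha_i\wedge\theta]) = -[\hat y_i]$ and $D_M([\hat\beta_i\wedge\theta]) = [\hat x_i]$. This converts the supposed relation into
\[\sum_{i=1}^g \bigl(-a_i[\hat y_i] + b_i[\hat x_i]\bigr) = 0 \quad \text{in } H_1(M;\R).\]
By Lemma~\ref{le:H1basis}, the set $\{[\hat x_1],\ldots,[\hat x_g],[\hat y_1],\ldots,[\hat y_g],[z]\}$ is a basis for the torsion-free group $H_1(M)$ (so also a basis for $H_1(M;\R)$), and in particular the $2g$ classes $[\hat x_i],[\hat y_i]$ are linearly independent. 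Hence every $a_i$ and $b_i$ is zero, proving the lemma.

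There is no real obstacle here: the lemma is an immediate corollary of the two preceding lemmas, and the only subtlety is bookkeeping the signs coming from $D_M([\hat\alpha_i\wedge\theta]) = -[\hat y_i]$. I would make sure to note that the interpretation of the wedge products in the statement refers to the pullbacks $\hat\alpha_i = K^*\tilde\alpha_i$ and $\hat\beta_i = K^*\tilde\beta_i$ on $M$, not to the forms $\alpha_i,\beta_i$ on $\Sigma$ (which would not even live on $M$).
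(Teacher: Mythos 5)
Your proof is correct and is exactly the paper's argument: the paper also deduces linear independence by noting that the Poincar\'e duals of these classes are $-[\hat y_i]$ and $[\hat x_i]$ (Lemma~\ref{le:thetaduals}), which are linearly independent by Lemma~\ref{le:H1basis}. Your remark that the forms in the statement should be read as the pullbacks $\hat\alpha_i$, $\hat\beta_i$ on $M$ is also consistent with how the paper uses this lemma.
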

\begin{proof}
These classes are linearly independent because their Poincar\'e duals are by Lemma~\ref{le:thetaduals} and Lemma~\ref{le:H1basis}.
\end{proof}

\begin{lemma}\label{le:final}
For each $i$, we have
\[\int_{C_i}\omega_M=\angb{[\omega_M\wedge\hat \beta_i],[M]},\quad\text{and} \quad \int_{D_i}\omega_M=-\angb{[\omega_M\wedge\hat \alpha_i],[M]}.\]
\end{lemma}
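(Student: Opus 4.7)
The plan is to show that $[C_i] = D_M([\hat\beta_i])$ and $[D_i] = -D_M([\hat\alpha_i])$ as classes in $H_2(M;\R)$, from which the lemma follows immediately by the defining property (\ref{eq:pdprop}) of $D_M$ applied with $[\alpha] = [\omega_M]$. By Lemma~\ref{le:H1basis}, $H_1(M;\R)$ is free on $\{[\hat x_j], [\hat y_j], [z]\}$ and Poincar\'e duality makes the intersection pairing $H_1(M;\R) \times H_2(M;\R) \to \R$ perfect, so each identification reduces to verifying matching intersection numbers against these three basis elements.

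On the geometric side, $[C_i] \cdot [z] = 0$ is immediate: the section $s$ takes values in $Z_1(\Sigma\setminus\{\ast\})$, the chain $C$ lies in $C_2(\Sigma\setminus\{\ast\})$, and since $\phi$ fixes $\ast$ and $x_i$ avoids $\ast$ the composition $\phi^{-1}\circ\gamma$ also avoids $\ast$, so $C_i$ has support disjoint from the image of $\{\ast\}\times[0,1]$ that represents $z$. For $[C_i]\cdot[\hat y_j]$ I would homotope $\hat y_j = (i_0)_* y_j$ into a nearby fiber as $(i_\epsilon)_* y_j$, which separates it from $(i_0)_* C$; the only surviving intersection is with $f_* T'$ at time $t = \epsilon$, and this reduces to the fiberwise algebraic intersection of $\phi^{-1}(x_i)$ with $y_j$ in $\Sigma$. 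Since $\phi \in \ISymps$ preserves $\ai$, this equals $\ai([x_i],[y_j]) = \delta_{ij}$, and the analogous calculation gives $[C_i]\cdot[\hat x_j] = 0$. For $D_i$ the same argument produces $[D_i]\cdot[\hat x_j] = \ai([y_i],[x_j]) = -\delta_{ij}$ and $[D_i]\cdot[\hat y_j] = 0$; the antisymmetry $\ai([y_i],[x_j]) = -\ai([x_j],[y_i])$ is precisely the source of the minus sign in the second formula of the lemma.

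On the cohomological side I would compute the corresponding intersection numbers for $D_M([\hat\beta_i])$ and $D_M([\hat\alpha_i])$ by combining the intertwining property (\ref{eq:intertwineprods}) with the explicit Poincar\'e duals $[\hat y_j] = -D_M([\hat\alpha_j \wedge \theta])$ and $[\hat x_j] = D_M([\hat\beta_j \wedge \theta])$ from Lemma~\ref{le:thetaduals}, and then reducing the resulting triple-wedge integrals on $M$ via Lemma~\ref{le:inttheta} to wedge integrals $\int_\Sigma \alpha_j \wedge \beta_i = \delta_{ij}$ on the fiber. Matching the two sets of intersection numbers yields both identifications, after which (\ref{eq:pdprop}) delivers the two displayed formulas. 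The main obstacle will be sign bookkeeping: the product orientation on $M$ induced from $\Sigma \times [0,1]$, the mapping-torus gluing $(p,0) \sim (\phi(p),1)$, and the conventions implicit in $D_M$ and (\ref{eq:intertwineprods}) must all be reconciled so that the fiberwise counts and the cohomological Kronecker-deltas agree on the nose; once this bookkeeping is done, the lemma is an immediate application of (\ref{eq:pdprop}).
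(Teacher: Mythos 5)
Your overall strategy is the paper's, recast in dual form: the paper computes $(i_0)_*[\Sigma]\cap[C_i]=[\hat x_i]$, deduces $D_M^{-1}([C_i])=m[\theta]-[\hat\beta_i]$ from Lemmas~\ref{le:H1basis}, \ref{le:thetaduals} and \ref{le:indepH2}, and then kills $m$ using $[z]$; you instead propose to pin down $[C_i]\in H_2(M;\R)$ by matching its intersection numbers against the $H_1(M;\R)$--basis $\{[\hat x_j],[\hat y_j],[z]\}$, which is a legitimate and essentially equivalent route, and your fiberwise computations $[C_i]\cdot[\hat y_j]=\ai([x_i],[y_j])$, $[C_i]\cdot[\hat x_j]=0$, $[C_i]\cdot[z]=0$ are correct (modulo the sign conventions you flag).

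The gap is on the Poincar\'e-dual side of your matching: you only explain how to compute the pairings of $D_M([\hat\beta_i])$ with $[\hat x_j]$ and $[\hat y_j]$ (via Lemma~\ref{le:thetaduals}, Equation~(\ref{eq:intertwineprods}) and Lemma~\ref{le:inttheta}), and you never verify its pairing with $[z]$. This check cannot be skipped: the fiber class $(i_0)_*[\Sigma]=D_M([\theta])$ pairs trivially with every $[\hat x_j]$ and $[\hat y_j]$ but pairs to $1$ with $[z]$, so agreement on $\{[\hat x_j],[\hat y_j]\}$ alone only determines $[C_i]$ up to a multiple of the fiber class --- exactly the ambiguity ($m[\theta]$) that the second half of the paper's proof is devoted to eliminating. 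The missing ingredient is that $D_M([\hat\beta_i])\cdot[z]=\pm\angb{[\hat\beta_i],[z]}=\pm\int_z\hat\beta_i=0$ (and likewise $\int_z\hat\alpha_i=0$), which is \emph{not} formal: it holds because the homotopy $\bar K$ is taken relative to the basepoint, so $K$ maps the circle $z$ to the constant loop at $0\in X$ and $\hat\beta_i=K^*\tilde\beta_i$ restricts to zero on $z$; a priori $\int_z\hat\beta_i$ is only an integer. Once you add this one evaluation (the paper's ``$m=0$'' step), your argument goes through and coincides in substance with the paper's proof.
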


\begin{proof}
We show the statement for $C_i$.
We aim to show that 
\[D_M^{-1}([C_i])=[\hat \beta_i],\]
from which the proposition immediately follows.
By shifting $i_0(\Sigma)$ to intersect transversely with $C_i$, we see that $(i_0)_*[\Sigma]\cap[C_i]=[\hat x_i]$.
Then by Lemma~\ref{le:indepH2}, Lemma~\ref{le:thetaduals} and Equation~(\ref{eq:intertwineprods}),
\[[\theta]\wedge D_M^{-1}([C_i])=[\hat \beta_i\wedge\theta].\]
Together with Lemma~\ref{le:H1basis}, this implies that for some $m\in\R$, we have
\[D_M^{-1}([C_i])=m[\theta]-[\hat \beta_i].\]
It is also apparent from the definitions that $[C_i]\cap [z]=0$.
Then applying 
Equation~(\ref{eq:intertwineprods}), we see that
\begin{align*}
0&=\angb{D_M^{-1}([C_i])\wedge D_M^{-1}([z]),[M]}
=\angb{(m[\theta]-[\hat \beta_i])\wedge D_M^{-1}([z]),[M]}\\
&=m\angb{[\theta]\wedge D_M^{-1}([z]),[M]}-\angb{[\hat \beta_i]\wedge D_M^{-1}([z]),[M]}.
\end{align*}
Since $D_M([\theta])=(i_0)_*[\Sigma]$ and $[z]\cap(i_0)_*[\Sigma]=[*]$, Equation~(\ref{eq:pdprop}) 
tells us
\[\angb{[\theta]\wedge D_M^{-1}([z]),[M]}=\angb{[\theta],[z]}=\int_z\theta=1.\]
Therefore
\[m=\angb{[\hat \beta_i]\wedge D_M^{-1}([z]),[M]}=\int_z\hat\beta_i=\int_{J_*z}\tilde \beta_i=0,\]
since $J_*z$ is supported on $\{0\}\subset X$.
Since $m=0$, this proves the statement for $C_i$, and the proof for $D_i$ is similar.
\end{proof}

Recall the contraction $\Phi\co \bigwedge^3H_1(\Sigma)\to H_1(\Sigma)$ from Section~\ref{sss:contraction}.
Using canonical isomorphisms, we will regard $\Phi$ as a map $\Phi\co H_3(X)\to H_1(\Sigma)$.
\begin{lemma}\label{le:contract}
We have
\[\Phi(\johom(p(\phi)))=\sum_{j=1}^g\big(\angb{[\omega_M\wedge\hat \alpha_j],[M]}[x_j]+\angb{[\omega_M\wedge\hat \beta_j],[M]}[y_j]\big).\]
\end{lemma}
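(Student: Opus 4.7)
\textbf{Proof plan for Lemma~\ref{le:contract}.} The plan is to extract the coefficients of $\Phi(\johom(p(\phi)))$ in the basis $\{[x_j],[y_j]\}_j$ of $H_1(\Sigma)$ by pairing with the evaluation-dual basis $\{\alpha_j^*,\beta_j^*\}\subset H_1(\Sigma)^*$, then to recognize each such pairing as a de Rham evaluation on $M$ via the Abel--Jacobi map. The computational heart is Lemma~\ref{le:contractadjoint}, which trades the contraction $\Phi$ for a wedge with the intersection form $\ai$.

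First I would apply Lemma~\ref{le:contractadjoint}: for every $j$,
\[\angb{\alpha_j^*,\Phi(\johom(p(\phi)))}=\angb{\ai\wedge\alpha_j^*,\johom(p(\phi))},\]
and the analogous identity holds with $\beta_j^*$ in place of $\alpha_j^*$. Hence it suffices to identify the right-hand sides with $\angb{[\omega_M\wedge\hat\alpha_j],[M]}$ and $\angb{[\omega_M\wedge\hat\beta_j],[M]}$ respectively, and then reassemble using the basis $\{[x_j],[y_j]\}_j$.

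Next I would translate $\ai\wedge\alpha_j^*$ into a de Rham class on $X$. Because $X$ is the torus $H_1(\Sigma;\R)/H_1(\Sigma;\Z)$, there is a canonical graded-ring isomorphism $H^*(X;\R)\cong\bigwedge^*H_1(\Sigma;\R)^*$ under which $[\tilde\alpha_j]\leftrightarrow\alpha_j^*$ and $[\tilde\beta_j]\leftrightarrow\beta_j^*$. Combined with the already-established identity $\omega_X=\sum_i\tilde\alpha_i\wedge\tilde\beta_i$ and the defining formula $\ai=\sum_i\alpha_i^*\wedge\beta_i^*$, this identification sends $[\omega_X]\leftrightarrow\ai$ and therefore $[\omega_X\wedge\tilde\alpha_j]\leftrightarrow\ai\wedge\alpha_j^*$ in degree three. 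Under the dual identification $H_3(X)\cong\bigwedge^3 H_1(\Sigma)$ used in Definition~\ref{de:thirddefn}, the two evaluation pairings coincide.

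Finally, invoking $\johom(p(\phi))=K_*[M]$ from Definition~\ref{de:thirddefn}, naturality of the evaluation pairing, and the definitions $K^*\omega_X=\omega_M$ and $K^*\tilde\alpha_j=\hat\alpha_j$, I obtain
\[\angb{\ai\wedge\alpha_j^*,\johom(p(\phi))}=\angb{[\omega_X\wedge\tilde\alpha_j],K_*[M]}=\angb{K^*[\omega_X\wedge\tilde\alpha_j],[M]}=\angb{[\omega_M\wedge\hat\alpha_j],[M]},\]
and the same computation with $\tilde\beta_j$ yields the $[y_j]$-coefficient. Summing over $j$ against the basis gives the claimed formula. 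The only real obstacle is bookkeeping: one must check that the identifications $H^3(X)\cong\bigwedge^3 H_1(\Sigma)^*$ and $H_3(X)\cong\bigwedge^3 H_1(\Sigma)$ are mutually dual with respect to the orientation conventions of the torus, so that the cohomology-homology pairing on $X$ matches the algebraic pairing used in Lemma~\ref{le:contractadjoint}. This is the standard fact for tori, and once it is in place the rest of the argument is formal.
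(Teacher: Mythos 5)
Your proposal is correct and follows essentially the same route as the paper's proof: identify $\ai$ with $[\omega_X]$ in $\bigwedge^2 H_1(\Sigma)^*$, apply Lemma~\ref{le:contractadjoint} to get $\angb{[\alpha],\Phi(\johom(p(\phi)))}=\angb{[\omega_X\wedge\alpha],K_*[M]}$ using $\johom(p(\phi))=K_*[M]$ from Definition~\ref{de:thirddefn}, and pull back by $K$. The paper's version is just a terser statement of the same argument, without spelling out the coefficient extraction against the evaluation-dual basis.
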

\begin{proof}
Definition~\ref{de:thirddefn} states that $\johom(p(\phi))=K_*[M]\in H_3(X)$.
Note that $[\omega_X]$ and $\ai$ define the same element of $\bigwedge^2 H_1(\Sigma)^*$.
So Lemma~\ref{le:contractadjoint} tells us
\[\angb{[\alpha],\Phi(\johom(p(\phi)))}=\angb{[\omega_X\wedge\alpha],K_*[M]}\]
for any closed $1$--form $\alpha$ on $X$.
The lemma follows when we pull these expressions back to $M$ by $K$.
\end{proof}

\begin{proof}[Proof of Theorem~\ref{mt:jacvssec}]
Apply Poincar\'e duality to Lemma~\ref{le:contract} to get
\[D_\Sigma^{-1}(\Phi(\tau(p(\phi))))=
\sum_{j=1}^g\big(\angb{[\omega_M\wedge\hat \alpha_j],[M]}[\beta_j]-\angb{[\omega_M\wedge\hat \beta_j],[M]}[\alpha_j]\big).\]
Fix an index $i$.
Then by Lemma~\ref{le:final},
\begin{align*}
D_\Sigma^{-1}(\Phi(\tau(p(\phi))))([x_i])&=-\angb{[\omega_M\wedge\hat \beta_i],[M]}\\
&=-\int_{C_i}\omega_M=\Flsec{s}(\phi)([x_i])-\Fljac{J}(\phi)([x_i]).
\end{align*}
Similarly,
\[D_\Sigma^{-1}(\Phi(\tau(p(\phi))))([y_i])
=\Flsec{s}(\phi)([y_i])-\Fljac{J}(\phi)([y_i]).\]
This proves the theorem.
\end{proof}

\subsection{Constructing extended flux maps}\label{ss:corollary}
\begin{proof}[Proof of Corollary~\ref{mc:eflmaps}]
Let $\epsilon\co\ptM\to H_1(\Sigma;\R)$ be a crossed homomorphism extending $\Phi\circ\johom$, which exists by Proposition~\ref{pr:ext}.
Of course it follows from Theorem~\ref{mt:hypvssec} that the crossed homomorphism
\[F=\Flsec{s}-\frac{g}{g-1}D_\Sigma^{-1}\circ\epsilon\circ p\]
agrees with $\Fl$ on $\Symps\cap\Sympo$ (in fact, we have already shown this in the proof of Theorem~\ref{mt:hypvssec}).
Then by Lemma~\ref{le:unext}, we have that $F$ extends uniquely to an extended flux map on $\Symp$. 
Lemma~\ref{le:unext} also applies to the crossed homomorphism
\[\Fljac{J}+\frac{1}{g-1}D_\Sigma^{-1}\circ\epsilon\circ p\]
which agrees with $\Fl$ on $\Symps\cap\Sympo$ by Theorem~\ref{mt:jacvssec}.
\end{proof}

\section{An extended flux map via hyperbolic geometry}\label{se:hypmap}

\subsection{The hyperbolic metric extended flux map}\label{ss:hypmap}
In this subsection, we define symmetric symplectic Dehn twists and show that they exist.
Then we proceed to prove Theorem~\ref{mt:symmsect}.
We finish by showing $\Flmet{h}$ is well defined.
We freely use the notation and conventions of Section~\ref{ss:hyperbolicprelim}.
We assume in this section, unless stated otherwise, that $g\geq 3$.

\begin{definition}
Define a \emph{symmetric symplectic Dehn twist} $t_a$ to be a symplectic representative of a Dehn twist about a simple closed curve $a$, supported on a regular neighborhood of $a$, with the following property:
for any simple closed curve $b$, there is a chain $C\in C_2(\Sigma)$, with  $\partial C = b+\ai([b],[a])a-(t_a)_*b$, signed area $\int_C\omega_\Sigma=0$, and $\supp C\subset \supp b\cup\supp t_a$.
\end{definition}

Recall the hyperbolic metric $h$ from Section~\ref{ss:hyperbolicprelim}.
\begin{lemma}\label{le:gottwists}
Every Dehn twist in $\clM$ has a representative that is a symmetric symplectic Dehn twist around an $h$--geodesic.
\end{lemma}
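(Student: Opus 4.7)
The plan is to build an explicit twist in a symplectically standardized annular neighborhood of the $h$--geodesic representative, using a twist function with a reflection symmetry, and to verify the chain condition by a direct computation in those coordinates.

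First, I would replace the given curve by the unique $h$--geodesic $a$ in its free homotopy class and choose an embedded tubular neighborhood $N$ of $a$. Since any symplectic form on an annulus is determined up to diffeomorphism by its total area, Theorem~\ref{th:moser} lets me identify $(N,\omega_\Sigma)$ symplectically with a standard model $A=[-r,r]\times(\R/\ell\Z)$ equipped with $dx\wedge dy$, in such a way that $a$ corresponds to $\{0\}\times(\R/\ell\Z)$. I would then pick a smooth $g\co[-r,r]\to[0,1]$ that is $0$ near $-r$, is $1$ near $r$, and satisfies the symmetry $g(-x)=1-g(x)$, so in particular $\int_{-r}^r g(u)\,du=r$. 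Define $t_a$ on $N$ by $(x,y)\mapsto(x,y+\ell g(x))$ and by the identity outside $N$; this is plainly a symplectomorphism supported in $\overline N$ representing the Dehn twist about $a$.

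To prove the chain condition, I would fix an arbitrary simple closed curve $b$ and construct $C\subset\overline N$ as a signed combination of two explicit pieces. The first is the \emph{strip} $S$ swept by the straight--line isotopy $(x,y)\mapsto(x,y+s\ell g(x))$ applied to $b\cap N$, for $s\in[0,1]$. A routine Jacobian computation shows that each arc of $b\cap N$ with $x$--endpoints $x_0,x_1$ contributes $\ell\int_{x_0}^{x_1}g(u)\,du$ to the signed area of $S$, which by the symmetry of $g$ adds up to exactly $r\ell\cdot\ai([b],[a])$; the boundary $\partial S$ also picks up ``side edges'' at the vertical $x$--endpoints, and because $g(-r)=0$ these reduce to $\ai([b],[a])$ copies of the right boundary loop $\lambda_r$ of $A$. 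The second piece is $H=[0,r]\times(\R/\ell\Z)$, with area $r\ell$ and boundary $\lambda_r-a$. Setting $C=S-\ai([b],[a])\,H$, the side edges cancel to give $\partial C=b+\ai([b],[a])\,a-(t_a)_*b$, the signed areas cancel to $0$, and $\supp C\subset\overline N\subset\supp t_a$.

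The main obstacle will be the sign bookkeeping across all the arcs of $b\cap N$ simultaneously: each positively crossing arc contributes a $+\lambda_r$ side edge and $+r\ell$ of area, each negatively crossing arc contributes $-\lambda_r$ and $-r\ell$, and non--crossing arcs (whose endpoints lie on the same boundary circle of $N$) contribute nothing to either, as do any closed components of $b\cap N$, which must be parallel to $a$ and are fixed setwise by $t_a$. The symmetric choice $g(x)+g(-x)=1$ is the essential ingredient, because it makes the swept area per unit positive crossing exactly half the area of $A$, which is precisely the quantity provided by $H$ for the cancellation.
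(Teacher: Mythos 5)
Your argument is sound in substance, and it takes a genuinely different route from the paper's. The paper works in hyperbolic Fermi coordinates on a collar of the geodesic, where $\omega_\Sigma=w(x)\,dt\wedge dx$ with $w$ independent of $t$ (so the twist is automatically symplectic), and it verifies the chain condition by first replacing $b$ with a homologous curve whose arcs in the collar are geodesic segments and then using the order-two isometric rotation of the collar together with Gauss--Bonnet to show that the two triangles cut off by an arc, its image, and $a$ have equal area. You instead flatten the collar symplectically and compute the area swept by the straight-line isotopy directly; since the relevant integrand $\ell g(x)\,dx$ is exact, each arc's contribution depends only on its $x$--endpoints, so you need no minimal-position or geodesic-straightening step and no Gauss--Bonnet at all---the hyperbolic metric enters only through the choice of geodesic representative. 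That is a real simplification (and your aside that closed components of $b\cap N$ are ``fixed setwise'' is both inaccurate and unnecessary: the same endpoint computation shows such components contribute no side edges and no area, and $\ai([b],[a])=0$ for them).

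Two points need repair, both fixable. First, with $g\equiv 0$ near $-r$ and $\equiv 1$ near $r$, the support of $t_a$ is a proper closed sub-annulus of $\overline N$, so your final inclusion $\overline N\subset\supp t_a$ is false; moreover $H$ and the portions of the strip swept over $\{g=1\}$ (where points of $b$ sweep entire circles $\{x\}\times\R/\ell\Z$) genuinely leave $\supp b\cup\supp t_a$, so the support clause of the definition fails as literally stated. The fix is immediate: take $g$ strictly increasing on $(-r,r)$ with all derivatives vanishing at $\pm r$ (as the paper does with its profile $f$), so that $\supp t_a=\overline N$ and your $\supp C\subset\overline N\cup\supp b$ suffices; alternatively note that the paper's working form of the property, Equation~(\ref{eq:defprop}), only requires the chain to live in a regular neighborhood on which $t_a$ is supported, union $\supp b$. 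Second, Theorem~\ref{th:moser} is stated for the closed surface, so identifying $(N,\omega_\Sigma)$ with the flat model \emph{with $a$ as the core circle} needs either a relative Moser argument on the annulus plus the observation that the collar can be shrunk so that the two sides of $a$ have equal area, or, more simply, the paper's own Fermi coordinates: since $\omega_\Sigma=w(x)\,dt\wedge dx$ with $w$ independent of $t$, the substitution $X=\int_0^x w$ produces your flat coordinates directly. (With that in hand, the symmetry $g(-x)=1-g(x)$ is not even essential: all the cancellation needs is that the swept area per crossing, $\ell\int g$, equals the area between $a$ and the boundary circle used for $H$, which you can arrange by choosing $\int g$; one should also fix the handedness of the twist by an appropriate orientation of the identification.)
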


\begin{proof}
Let $a$ be a simple closed geodesic curve in $\Sigma$.
Let  $N_\epsilon(a)$ be the open $\epsilon$--neighborhood of $a$.
Pick $\epsilon>0$ small enough that the closure $\bar N_\epsilon(a)$ is a closed annulus. 
Let $\ell$ be the length of $a$ and let $\gamma\co \R/\ell\Z\to\Sigma$ be a unit-speed parametrization of $a$.
We have coordinates
\[\bar N_\epsilon(a)\to (\R/\ell \Z)\times[-\epsilon,\epsilon]\]
as follows.
A point $q$ maps to $(\gamma^{-1}(\mathrm{proj}_a(q)),r(q)d(q,a))$,
where $\mathrm{proj}_a$ is the closest point projection to $a$, $d$ is the distance, and $r(q)\in\{1,-1,0\}$ is 1 if $q$ is to the right of $a$, as viewed from above, and $-1$ if $q$ is to the left.
Let $f\co \R\to [0,1]$ be a non-decreasing smooth function such that
$f$ is locally constant outside of $[-\epsilon,\epsilon]$, we have $f(-x)=1-f(x)$ for all $x\in\R$, and $f(-\epsilon)=0$ (and therefore $f(\epsilon)=1$).
We have a diffeomorphism $\hat f$ of $(\R/\ell\Z)\times [-\epsilon,\epsilon]$ that fixes the boundary, given by $(t,x)\mapsto (t+\ell f(x),x)$.
Let $t_a\co \Sigma\to\Sigma$ be given by the action of $\hat f$ on $N_\epsilon(a)$ and by the identity on the rest of~$\Sigma$.

Then $t_a$ is clearly a diffeomorphism of $\Sigma$ and a Dehn twist around $a$. 
Let $w\co (\R/\ell \Z)\times(-\epsilon,\epsilon)\to \R$ be the function such that the pullback of the $2$-form $w dt\wedge dx$ on $(\R/\ell \Z)\times(-\epsilon,\epsilon)$ to $N_\epsilon(a)$ is $\omega_\Sigma$.
Since translation along $a$ is an isometry of $\bar N_\epsilon(a)$, the function $w(t,x)$ is constant in the first coordinate.
Then $\hat f^*(w dt\wedge dx)=w dt\wedge dx$ , and therefore $t_a$ preserves~$\omega_\Sigma$.

To show that $t_a$ is symmetric, suppose that $b$ is a simple closed curve in $\Sigma$.
In fact, in showing $t_a$ is symmetric with respect to $b$, we may replace our curve $b$ with any homologous curve supported on $\supp b\cup \supp t_a$.
To see this, suppose $b'$ is a closed curve on $\supp b\cup \supp t_a$ and $C'\in C_2(\supp b\cup\supp t_a)$ with $\partial C'=b'-b$.
If we have a chain $C\in C_2(\supp b'\cup\supp t_a)$ bounding $b'+\ai([b'],[a])a-(t_a)_*b'$ with $\int_C\omega_\Sigma=0$,
then $C-C'+(t_a)_*C'$ is a chain supported on $\supp b\cup \supp t_a$, bounding $b+\ai([b],[a])a-(t_a)_*b$, with $\int_{C-C'+(t_a)_*C'}\omega_\Sigma=0$.
So we assume that $b$ is a piecewise-smooth curve that intersects $a$ minimally, that each component of $b\cap N_\epsilon(a)$ intersects $a$, and that each component of $b\cap N_\epsilon(a)$ is a geodesic segment (we allow $b$ to intersect $\partial N_\epsilon(a)$ arbitrarily).

Let $b_1,\ldots,b_k$ be the components of $b\cap N_\epsilon(a)$.
Each $b_i$ is an open geodesic segment and we denote its closure by $c_i$.
Temporarily fix an $i$.
The curves $c_i$, $(t_a)_*c_i$ and $a$ bound two closed triangular regions $R_1$ and $R_2$.
Let $C_i$ be a chain supported on $R_1\cup R_2$, such that $\partial C_i= c_i+\ai(c_i,a)a-(t_a)_*c_i$.
Let $t_0\in\R/\ell\Z$ be such that $c_i$ intersects $a$ at the point $\gamma(t_0)$.
Then $(t_a)_*c_i$ intersects $a$ at the point $\gamma(t_0+\ell/2)$.
Note that the map $\phi$ from $(\R/\ell \Z)\times[-\epsilon,\epsilon]$ to itself sending $(t,x)$ to $(2t_0-t,-x)$ induces an isometry of $\bar N_\epsilon(a)$ (if $\bar N_\epsilon(a)$ were embedded in $\R^3$, this would be an order-two rotation around the axis through $\gamma(t_0)$ and $\gamma(t_0+\ell/2)$).
The map $\phi$ stabilizes each of $a$ and $c_i$, since these are geodesics through $\gamma(t_0)$ and the derivative $D_{(t_0,0)}\phi$ is minus the identity.
Using the symmetry of $f$, it is easy to see that $\hat f\circ \phi=\phi\circ \hat f$, so $\phi$ also stabilizes $(t_a)_*c_i$.
Then $\phi$ swaps $R_1$ and $R_2$, so they have the same area. 
The orientation of $\partial C_i$ defines an orientation of $\partial R_j$, which induces an orientation on $R_j$, for $j=1,2$.
In particular, the orientation of one of these regions is the same as $\Sigma$, and the orientation of the other is the reverse of $\Sigma$.
So $\int_{C_i}\omega_\Sigma=0$, the signed area of $R_1\cup R_2$.
Then there is a chain $C'\in C_2(\supp b)$ such that the chain
\[C=C'+\sum_{i=1}^k C_i\in C_2(\supp b\cup \supp t_a)\]
satisfies $\partial C=b+\ai([b],[a])a-(t_a)_*b$, and $\int_C\omega_\Sigma=0$.
\end{proof}

The following proposition easily implies Theorem~\ref{mt:symmsect}.
We show how the theorem follows from the proposition, and then we build up to the proof of the proposition.
\begin{proposition}\label{pr:ssdtsympo}
When $g\geq 3$, if $\phi\in\Sympo$ is a composition of symmetric symplectic Dehn twists around $h$--geodesics, then $\phi\in\Ham$.
\end{proposition}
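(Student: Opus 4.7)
The plan is to show that for each defining relation of Gervais's presentation of $\clM$ (Theorem~\ref{th:presentation}), the product of symmetric symplectic Dehn twists obtained by replacing each $T_a$ by a chosen symmetric symplectic representative $t_a$ around the $h$-geodesic in the class of $a$ lies in $\Ham$. Extending this choice of lift to a homomorphism $L\colon F\to\Symp$ from the free group $F$ on Dehn-twist generators, and using that $\Ham$ is normal in $\Symp$ (so that $\Symp$-conjugates of relators still land in $\Ham$), any word $w\in F$ with $w=1$ in $\clM$ satisfies $L(w)\in\Ham$. In particular, writing our $\phi$ as $L(w)$ for $w=\prod_i T_{a_i}^{\epsilon_i}$ and using that $w=1$ in $\clM$ (since $\phi\in\Sympo$), this gives $\phi\in\Ham$.

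Commutativity is immediate: disjoint homotopy classes have disjoint $h$-geodesic representatives by minimal intersection, so $t_a$ and $t_b$ can be chosen with disjoint supports and commute exactly in $\Symp$. For the braid relation $T_c=T_aT_bT_a^{-1}$ (with $|a\cap b|=1$ and $c$ the $h$-geodesic in the class of $T_a(b)$), the element $t_a t_b t_a^{-1} t_c^{-1}$ lies in $\Sympo$; I would compute its flux using $\Flsec{s}$. The key observation is that the conjugate $t_a t_b t_a^{-1}$ inherits a symmetric-style property relative to $t_a(b)$: push forward the chains supplied by $t_b$'s symmetric property under the symplectomorphism $t_a$, which preserves both intersection numbers and signed areas. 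Since $t_a(b)$ is freely homotopic to the geodesic $c$, their tubular neighborhoods cobound a region that can be taken disjoint from $*$. Unpacking the crossed-homomorphism identity for $\Flsec{s}$ using these two symmetric chains reduces the flux to the signed area of this cobounded region, which vanishes.

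For the non-separating chain and star relations, the configuration $a_1,a_2,a_3,b,d_1,d_2,d_3$ of $h$-geodesics has a regular neighborhood whose boundary is $d_1\sqcup d_2\sqcup d_3$, bounding a genus-one three-holed subsurface $\Sigma'$ realized cleanly by the minimal intersection property. Its hyperbolic area is $-2\pi\chi(\Sigma')=6\pi$ by Gauss--Bonnet. Iterating the symmetric property through the twelve twists of $(t_{a_1}t_{a_2}t_{a_3}t_b)^3(t_{d_1}t_{d_2}t_{d_3})^{-1}$, and tracking how each successive reference curve is pushed forward by prior twists, the flux of the relator reduces to a signed combination of geodesic polygonal areas tiling $\Sigma'$; applying Gauss--Bonnet to each polygon and totaling, the contributions cancel so the relator lies in $\Ham$. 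The chain relation is handled identically as the degenerate case in which one of the $d_i$ bounds a disk.

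The main obstacle is the star relation: the bookkeeping of how reference curves evolve through the three-fold composition of four twists, the orientation conventions for the resulting geodesic polygons, and the Gauss--Bonnet assembly of those polygons into $\Sigma'$ is the technical heart of the proof. Commutativity is automatic and the braid relation is essentially a local computation inside an annular neighborhood; only the star (and chain) relation requires the global area computation that makes essential use of the hyperbolic metric via Gauss--Bonnet.
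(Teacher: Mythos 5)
Your overall strategy is the paper's: lift Gervais's presentation by symmetric symplectic twists, check each relation lands in $\Ham$, and use normality of $\Ham$. But three essential ingredients are missing, and two of your claims are wrong as stated. First, the braid relation is \emph{not} a local computation that avoids the hyperbolic metric: the step you assert without proof --- that the region swept between $t_a(b)$ and its geodesic representative $c$ has zero signed area --- is precisely the nontrivial hyperbolic input. The paper proves it (Lemma~\ref{le:intersect1}) by noting that the three geodesics $a,b,c$ pairwise intersect once, ruling out a triple point by Gauss--Bonnet (a geodesic triangle with angle sum $\pi$ would have zero area), and then using Gauss--Bonnet again to see that the two triangles cut out by $a,b,c$ have equal angles and hence equal areas. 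Nothing about free homotopy of $t_a(b)$ and $c$ forces the cobounding chain to have zero area, so your claim that ``only the star (and chain) relation requires the global area computation'' is a genuine gap, not a simplification.

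Second, you never explain why the flux of a relator vanishes \emph{exactly} rather than modulo $\mathrm{Area}(\Sigma,\omega_\Sigma)$, and relatedly you never say where $g\geq 3$ is used. This cannot be finessed: the paper's remark after Lemma~\ref{le:chainrelation} exhibits, at $g=2$, a lift of the chain relation by symmetric symplectic twists that lies in $\Sympo$ but has flux equal to the total area, so any argument in which the genus hypothesis plays no visible role proves a false statement. The paper's mechanism is Lemma~\ref{le:hamdetector}: the relator is supported on a \emph{proper} subsurface $\Sigma'$, one chooses homology basis representatives whose refined (zero-area, basepoint-avoiding) classes are fixed, picks a point $q$ off $\Sigma'$ and the basis curves, and conjugates by a Hamiltonian carrying $q$ to $*$ so that $\Flsec{s}$ (which is only defined on $\Symps$ --- note your relators need not fix $*$) can be evaluated and shown to vanish on the nose; when $g\geq 3$ there is enough room for such a $q$ and such basis curves, and when $g=2$ there is not. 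Third, your reduction ``write $\phi=L(w)$'' is not available: $\phi$ is a product of \emph{arbitrary} symmetric symplectic twists, not your chosen lifts, so you need that two symmetric symplectic twists about the same geodesic differ by an element of $\Ham$ (the paper's Lemma~\ref{le:twistdiff}); this is also what lets you shrink supports so that disjoint geodesics give commuting twists and relators are supported on the model subsurface. That lemma is itself proved via the detector lemma above, so it is a substantive missing step rather than bookkeeping.
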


\begin{proof}[Proof of Theorem~\ref{mt:symmsect}]
Let $\sigma\co \clM\to\Symp$ be a set-map section to the projection $\clM$, sending each mapping class to a composition of symmetric symplectic Dehn twists around $h$--geodesics (such a $\sigma$ exists by Theorem~\ref{th:Dehn}).
Let $\hat\sigma_h\co \clM\to\Symp/\Ham$ be induced by $\sigma$.
For any $\phi,\psi\in \clM$, we know that $\sigma(\phi)\sigma(\psi)\sigma(\phi\psi)^{-1}\in\Sympo$ because $\sigma$ is a section to the projection.
Then by Proposition~\ref{pr:ssdtsympo}, $\sigma(\phi)\sigma(\psi)\sigma(\phi\psi)^{-1}\in\Ham$, so $\hat\sigma_h$ is a homomorphism.

If $\sigma'$ is a second choice of section satisfying the same hypotheses, then for any $\phi\in\clM$, we have $\sigma'(\phi)\sigma(\phi)^{-1}\in\Sympo$.
Again by Proposition~\ref{pr:ssdtsympo}, it is in $\Ham$.
So $\hat\sigma_h$ does not depend on the choice of $\sigma$.
\end{proof}

Our strategy to prove Proposition~\ref{pr:ssdtsympo} is to use Gervais's presentation (Theorem~\ref{th:presentation}) to describe elements of $\Sympo$ that are compositions of symmetric symplectic Dehn twists around $h$--geodesics;
then to show that these elements are in $\Ham$, we use $\Flsec{s}$ along with some new ideas we introduce below.

Following McDuff~\cite{mcd}, we define the \emph{strange homology group} $SH_1(\Sigma)=SH_1(\Sigma,\omega_\Sigma;\Z)$ to be $Z_1(\Sigma)/\mathord{\sim}$ where $c_1\sim c_2$ if there is a chain $C\in C_2(\Sigma)$ with $\partial C=c_1-c_2$ and $\int_C\omega_\Sigma=0$.
We use an extension of this concept.
Suppose that $Y\subset\Sigma$ is a piecewise-smoothly embedded simplicial complex in $\Sigma$.
We denote by $SH_1(Y)$ the group $Z_1(Y)/\mathord{\sim}_Y$ where $c_1\sim_Y c_2$ if there is a chain $C\in C_2(Y)$ with $\partial C=c_1-c_2$ and $\int_C\omega_\Sigma=0$.
Denote the $SH_1(Y)$--class of an element $c$ of $Z_1(Y)$ by $\angb{c}$, or by $\angb{c}_Y$ if there is potential for confusion.
If $Y\subset Y'$ and $c\in Z_1(Y)$, then $\angb{c}_{Y}\subset \angb{c}_{Y'}$, but the reverse inclusion does not necessarily hold.
Note that $c\mapsto \angb{c}$ is linear.
If $\phi\in\Symp$ and $\phi$ leaves $Y$ invariant, then $\angb{c}=\angb{d}$ if and only if $\angb{\phi_*c}=\angb{\phi_*d}$.
In particular, this means that $\Symp$ acts on $SH_1(\Sigma)$, and the subgroup of $\Symp$ leaving $Y$ invariant acts on $SH_1(Y)$.

We can use this notion to restate the definition of a symmetric symplectic Dehn twist.
Suppose $t_a$ is a symplectic Dehn twist around a curve $a$.
Then $t_a$ is symmetric if it is supported on a regular neighborhood $A$ of $a$ and for every simple closed curve $b$, we have
\begin{equation}\label{eq:defprop}(t_a)_*\angb{b}=\angb{b} + \ai([b],[a])\angb{a}\in SH_1(A\cup \supp b).\end{equation}
Note the similarity between the action of symmetric symplectic Dehn twists on $SH_1$ groups and the action of Dehn twists on $H_1(\Sigma)$, as shown in Equation~(\ref{eq:twistact}).

In the following, for a subsurface $\Sigma'$ of $\Sigma$, the group $\clMn(\Sigma',\partial\Sigma')$ denotes the mapping class group of $\Sigma'$ relative to its boundary, meaning the group of diffeomorphisms of $\Sigma'$ fixing $\partial\Sigma'$ pointwise modulo equivalence by homotopy relative to $\partial\Sigma'$.
The following lemma plays a key role in the proof of Proposition~\ref{pr:ssdtsympo}.
\begin{lemma}\label{le:hamdetector}
Let $\Sigma'$ be a proper subsurface of $\Sigma$ and suppose that $\phi\in\Symp$ is supported on $\Sigma'$ and $[\phi|_{\Sigma'}]\in \clMn(\Sigma',\partial\Sigma')$ is trivial.
If $H_1(\Sigma)$ has a set of basis representatives $c_1,\ldots, c_{2g}\in Z_1(\Sigma)$ with $\phi_*\angb{c_i}=\angb{c_i}\in SH_1(\Sigma'\cup \supp c_i)$ for each $i$, then $\phi\in\Ham$.
\end{lemma}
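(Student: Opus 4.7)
The plan is to prove the lemma in two stages: first I would establish $\phi\in\Sympo$, so that $\Fl(\phi)$ is defined, and then show $\Fl(\phi)([c_i])=0$ for each basis cycle $c_i$, which forces $\phi\in\Ham$. For the first stage, the triviality of $[\phi|_{\Sigma'}]\in\clMn(\Sigma',\partial\Sigma')$ yields a smooth isotopy of $\Sigma'$ from the identity to $\phi|_{\Sigma'}$ relative to $\partial\Sigma'$; extending by the identity on $\Sigma\setminus\Sigma'$ gives a smooth isotopy $\phi_t$ of $\Sigma$ from $\mathrm{id}$ to $\phi$ that is supported on $\Sigma'$. So $\phi\in\Diff^0(\Sigma)$, and together with $\phi\in\Symp$ this forces $\phi\in\Sympo$ by the argument of Remark~\ref{re:surfflux}, which combines Theorem~\ref{th:moser} with the Earle--Eells contractibility of $\Diff^0(\Sigma)$.

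For the second stage I would use this same smooth isotopy $\phi_t$ to compute $\Fl(\phi)([c_i])$ geometrically. Parametrize $c_i$ by $\gamma_i\co S^1\to\Sigma$ and drag it along $\phi_t$ to produce a chain $C_i\in C_2(\Sigma)$ with $\partial C_i=\phi_*c_i-c_i$. Because $\phi_t$ is supported on $\Sigma'$, the chain $C_i$ is automatically supported on $\Sigma'\cup\supp c_i$. The delicate point will be to verify that $\Fl(\phi)([c_i])=\int_{C_i}\omega_\Sigma$ even though $\phi_t$ is only smooth, not symplectic: any two smooth isotopies from $\mathrm{id}$ to $\phi$ are homotopic rel endpoints in the simply-connected $\Diff^0(\Sigma)$, and a Stokes argument using $d\omega_\Sigma=0$ on the cobounding $3$-chain in $\Sigma\times[0,1]^2$ shows that the associated area integrals agree, and in particular match the value obtained from a symplectic isotopy.

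To finish, I would invoke the $SH_1$-vanishing hypothesis to obtain a chain $C_i'\in C_2(\Sigma'\cup\supp c_i)$ with $\partial C_i'=\phi_*c_i-c_i$ and $\int_{C_i'}\omega_\Sigma=0$. Since $\Sigma'$ is a proper subsurface and $\supp c_i$ is one-dimensional, there is a point $p\in\Sigma\setminus(\Sigma'\cup\supp c_i)$; then $\Sigma'\cup\supp c_i\subset\Sigma\setminus\{p\}$, which has trivial $H_2$, so the $2$-cycle $C_i-C_i'$ bounds some $3$-chain $D$ in $\Sigma$, and $\int_{C_i}\omega_\Sigma-\int_{C_i'}\omega_\Sigma=\int_D d\omega_\Sigma=0$. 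Thus $\Fl(\phi)([c_i])=0$ for every $i$, so $\Fl(\phi)=0$ and $\phi\in\Ham$. The hard part will be the justification in the second paragraph: I need the flux to be computable along a non-symplectic (but spatially restricted) isotopy, since only such an isotopy produces a swept chain lying in $\Sigma'\cup\supp c_i$, and without this, the $SH_1$-vanishing hypothesis cannot be applied.
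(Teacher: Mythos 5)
Your proposal is correct, but it takes a different route from the paper. The paper never computes the flux by dragging cycles along a non-symplectic isotopy: instead it picks a point $q\in\Sigma\bs(\Sigma'\cup c_1\cup\dots\cup c_{2g})$, upgrades (via Moser) the rel-boundary isotopy to a symplectic isotopy from $\phi$ to the identity fixing $q$, conjugates by some $\psi\in\Ham$ with $\psi(q)=*$ (using the transitivity of $\Ham$, Lemma~\ref{le:hamtrans}) so that $\psi\phi\psi^{-1}\in\Sympso$, and then applies the already-established map $\Flsec{s}$ with the tailor-made section $s([c_i])=\psi_*c_i$: the $\psi$-images of the zero-area chains furnished by the $SH_1$ hypothesis avoid $*$, so $\Flsec{s}(\psi\phi\psi^{-1})=0$, and the $\Symp$-equivariance of $\Fl$ gives $\Fl(\phi)=0$. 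You instead shoulder directly the claim you correctly flag as the crux: that $\Fl(\phi)([c_i])$ equals the area swept by $c_i$ under the spatially supported \emph{smooth} isotopy. Your justification works: any two paths in $\Diff^0(\Sigma)$ from the identity to $\phi$ are homotopic rel endpoints because $\Diff^0(\Sigma)$ is contractible for $g\geq 2$ (Earle--Eells), and the Stokes argument on the resulting $3$--chain (with the $t=0$, $t=1$ faces degenerate and the side faces cancelling since $\partial c_i=0$) shows homotopic-rel-endpoints isotopies sweep equal areas, so the smooth sweep agrees with the symplectic one defining $\Fl$. Note that this is exactly where simple connectivity of $\Diff^0$ is essential --- on the torus the swept area of a smooth isotopy is only well defined modulo $\mathrm{Area}(\Sigma)$ --- and it plays in your argument the role that the basepoint $*$ (built into $\Flsec{s}$) plays in the paper's. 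Both arguments exploit properness of $\Sigma'$ to kill the mod-$\mathrm{Area}(\Sigma)$ ambiguity, you via the auxiliary point $p$ and $H_2(\Sigma\bs\{p\})=0$, the paper via chains in $C_2(\Sigma\bs\{*\})$. What each buys: the paper reuses its $\Flsec{s}$ machinery and only needs Moser-type statements it has already asserted, while your version is more self-contained (no conjugation, no section-based map) at the cost of the homotopy-invariance-of-swept-area lemma and of the standard fact $\Symp\cap\Diff^0(\Sigma)=\Sympo$, which you cite somewhat loosely (Remark~\ref{re:surfflux} states simple connectivity of $\Sympo$, not this fact, though the same Moser-plus-convexity toolkit proves it); these are technicalities at the same level of rigor as the paper's own assertions, not gaps.
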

The lemma fails if we allow $\Sigma'=\Sigma$.
If $\phi\in\Sympso$ and $c\in Z_1(\Sigma)$ such that $\Fl(\phi)([c])$ is an integer multiple of $\mathrm{Area}(\Sigma,\omega_\Sigma)$, then $\phi_*\angb{c}=\angb{c}\in SH_1(\Sigma)$.
However, $\phi$ is only in $\Ham$ if $\Fl(\phi)([c])=0$ for each~$c$.

\begin{proof}[Proof of Lemma~\ref{le:hamdetector}]
Since $[\phi|_{\Sigma'}]$ is the identity class in $\clMn(\Sigma',\partial\Sigma')$, there is a smooth homotopy from $\phi|_{\Sigma'}$ to the identity (on $\Sigma'$) relative to $\partial\Sigma'$.
Pick a point $q\in\Sigma\bs(\Sigma'\cup c_1\cup\cdots\cup c_{2g})$.
Then by the Moser stability theorem (Theorem~\ref{th:moser}), 
there is a smooth homotopy from $\phi$ to the identity on $\Sigma$, through elements of $\Symp$ that fix~$q$.

Since $\Ham$ acts transitively on $\Sigma$, we can choose $\psi\in\Ham$ with $\psi(q)=*$.
Then $\psi\phi\psi^{-1}\in\Sympso$.
The hypotheses of the lemma imply that $(\psi\phi\psi^{-1})_*\angb{\psi_*c_i}=\angb{\psi_*c_i}\in SH_1(\psi(\Sigma'\cup \supp c_i))$ for each $i$.
So for each $i$, there is a $D_i\in C_2(\psi(\Sigma'\cup \supp c_i))$ with $\partial D_i =(\psi\phi)_*c_i-\psi_*c_i$ and $\int_{D_i}\omega_\Sigma=0$.
In particular, $D_i\in C_2(\Sigma\bs\{*\})$, since $\psi(q)=*$ and $q\notin (\Sigma'\cup c_i)$.
We define $s\co H_1(\Sigma)\to Z_1(\Sigma\bs\{*\})$ by setting $s([c_i])=\psi_*c_i$ and extending linearly.
The fact that each $D_i\in C_2(\Sigma\bs\{*\})$ then implies that $\Flsec{s}(\psi\phi\psi^{-1})=0$.
Then since $\Flsec{s}$ extends $\Fl|_{\Sympso}$ and $\Fl$ is $\Symp$--equivariant, we have that $\Fl(\phi)=0$.
\end{proof}

\begin{lemma}\label{le:twistdiff}
If $t_a$ and $t_a'$ are both symmetric symplectic Dehn twists around the same geodesic $a$, then $t_a^{-1}t_a'\in \Ham$.
In particular, for any $\phi_1,\phi_2\in \Symp$, the maps $\phi_1t_a\phi_2$ and $\phi_1t_a'\phi_2$ are in the same coset of $\Ham$.
\end{lemma}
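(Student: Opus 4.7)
The plan is to apply Lemma~\ref{le:hamdetector} directly to $\phi=t_a^{-1}t_a'$, and then deduce the coset statement from normality of $\Ham$ in $\Symp$.

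First, let $\Sigma'=\supp t_a\cup\supp t_a'$. Both supports are regular annular neighborhoods of the simple closed curve $a$, so $\Sigma'$ is again an annular neighborhood of $a$, and in particular a proper subsurface of $\Sigma$. The map $\phi$ is supported on $\Sigma'$. On $\Sigma'$, viewed as an element of $\clMn(\Sigma',\partial\Sigma')\cong\Z$, each of $t_a|_{\Sigma'}$ and $t_a'|_{\Sigma'}$ equals the generator (the Dehn twist), so $\phi|_{\Sigma'}$ is trivial. This handles the first two hypotheses of Lemma~\ref{le:hamdetector}.

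The main content is verifying the $SH_1$ hypothesis for a simple-closed-curve basis of $H_1(\Sigma)$, for instance the basis representatives $x_1,\ldots,x_g,y_1,\ldots,y_g$ from Section~\ref{sss:basisreps}. Fix such a basis curve $b$ and set $Y=\Sigma'\cup\supp b$. The defining property of symmetric symplectic Dehn twists, stated as Equation~(\ref{eq:defprop}), gives
\[
(t_a)_*\angb{b}_Y=\angb{b}_Y+\ai([b],[a])\angb{a}_Y,\qquad
(t_a')_*\angb{b}_Y=\angb{b}_Y+\ai([b],[a])\angb{a}_Y.
\]
Next I note that $(t_a)_*\angb{a}_Y=\angb{a}_Y$: the map $t_a$ preserves the core curve $a$ as a set (it acts by a reparametrization on $a$), so $(t_a)_*a$ and $a$ cobound a degenerate $2$--chain supported on $a$ itself, whose integral against $\omega_\Sigma$ vanishes. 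Applying $(t_a)_*^{-1}$ to the first displayed equation therefore yields $(t_a)_*^{-1}\angb{b}_Y=\angb{b}_Y-\ai([b],[a])\angb{a}_Y$. Substituting into $(t_a^{-1}t_a')_*\angb{b}_Y=(t_a)_*^{-1}(t_a')_*\angb{b}_Y$ and cancelling gives $\phi_*\angb{b}_Y=\angb{b}_Y$, which is exactly the third hypothesis of Lemma~\ref{le:hamdetector}. That lemma then yields $\phi=t_a^{-1}t_a'\in\Ham$.

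For the second assertion, write $\phi_1t_a\phi_2=(\phi_1t_a'\phi_2)\cdot\bigl(\phi_2^{-1}(t_a')^{-1}t_a\phi_2\bigr)$. It suffices to check that $\Ham$ is normal in $\Symp$: given any extended flux map $F$ (which exists, e.g., by Corollary~\ref{mc:eflmaps}) and any $\psi\in\Ham$, $\eta\in\Symp$, the crossed-homomorphism identity, together with $F(\eta^{-1})=-\eta^{-1}\cdot F(\eta)$ and $F(\psi)=0$, yields $F(\eta\psi\eta^{-1})=F(\eta)-\eta\psi\eta^{-1}\cdot F(\eta)$; since $\eta\psi\eta^{-1}\in\Sympo$ acts trivially on $H^1(\Sigma;\R)$, this difference vanishes, so $\eta\psi\eta^{-1}\in\Ham$. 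Applying this with $\eta=\phi_2$ and $\psi=(t_a')^{-1}t_a$ completes the proof.

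The main obstacle, and the place where the hypothesis of symmetry is essential, is the $SH_1$--computation: one needs not just that $t_a$ and $t_a'$ induce the same map on ordinary homology (which is automatic), but that their effects on representative cycles agree modulo boundaries of \emph{zero-signed-area} $2$--chains supported in $\Sigma'\cup\supp b$. Without the defining property of symmetric symplectic Dehn twists, there is no a priori reason for the signed-area correction terms to cancel, and the argument would fail.
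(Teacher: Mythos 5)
Your proposal is correct and follows essentially the same route as the paper: reduce to Lemma~\ref{le:hamdetector} by noting $t_a^{-1}t_a'$ is supported on an annular neighborhood of $a$ and is trivial in $\clMn(\Sigma',\partial\Sigma')$, then use Equation~(\ref{eq:defprop}) to cancel the signed-area correction terms in $SH_1(\Sigma'\cup\supp b)$, and finish with normality of $\Ham$. Your explicit inversion of $(t_a)_*$ on $SH_1$ and your flux-based verification that $\Ham$ is normal are just slightly more spelled-out versions of steps the paper takes for granted.
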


\begin{proof}
Since $t_a$ and $t_a'$ are both supported on regular neighborhoods of $a$, we have that $t_a^{-1}t_a'$ is supported on a closed regular neighborhood $\Sigma'$ of $a$.  
It is immediate that $t_a^{-1}t_a'|_{\Sigma'}$ projects to the trivial element of $\clMn(\Sigma',\partial\Sigma')$.
For any simple closed curve $b$, it follows from Equation~(\ref{eq:defprop}) that
\begin{align*}
(t_a^{-1}t_a')_*\angb{b}&=(t_a^{-1})_*(\angb{b}+\ai([b],[a])\angb{a})\\
&=\angb{b}-\ai([b],[a])\angb{a}+\ai([b],[a])\angb{a}=\angb{b}
\end{align*}
where these classes are in $SH_1(\Sigma'\cup \supp b)$.
So the first statement follows from Lemma~\ref{le:hamdetector}.
The second statement follows immediately from the fact that $\Ham$ is normal in $\Symp$.
\end{proof}

Now we analyze the lifts of relations from the mapping class group that we get by replacing Dehn twists with symmetric symplectic Dehn twists around $h$--geodesics.
The next two lemmas imply that a braid relation from Gervais's presentation lifts to an element of $\Ham$.
\begin{lemma}\label{le:intersect1}
Suppose $t_a$ is a symmetric symplectic Dehn twist around the geodesic $a$, the simple closed curve $b$ is a geodesic with $|a\cap b|=1$, and suppose $c$ is the geodesic representative of $(t_a)_*b$.
Let $\Sigma'$ be a subsurface of $\Sigma$ of genus one with a single boundary component, with $\Sigma'$ containing a neighborhood of $\supp a\cup \supp b\cup \supp c$.
Then $\angb{b}+\ai([b],[a])\angb{a}=\angb{c}\in SH_1(\Sigma')$.
\end{lemma}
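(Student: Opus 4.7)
The statement to be proved does not depend on any particular choice of symmetric symplectic Dehn twist $t_a$, since $c$ depends on $t_a$ only through the free homotopy class of $(t_a)_*b$, which is always $[b]+\ai([b],[a])[a]$. By Lemma~\ref{le:gottwists} (and the freedom to shrink the annular support in its construction) we may work with a specific symmetric symplectic Dehn twist whose support is contained in $\Sigma'$.

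The plan is then to exhibit the desired zero-area cobordism in $\Sigma'$ as a sum of two simpler ones. First, applying the defining property of symmetric symplectic Dehn twists, Equation~(\ref{eq:defprop}), to the simple closed curve $b$ gives a chain $C_1\in C_2(\supp t_a\cup\supp b)\subset C_2(\Sigma')$ with $\partial C_1=b+\ai([b],[a])\,a-(t_a)_*b$ and $\int_{C_1}\omega_\Sigma=0$. This reduces the claim to producing a chain $C_2\in C_2(\Sigma')$ with $\partial C_2=(t_a)_*b-c$ and $\int_{C_2}\omega_\Sigma=0$; then $C=C_1+C_2$ will witness the desired equality in $SH_1(\Sigma')$.

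For $C_2$, I first note that $\Sigma'$ has nonempty boundary and hence $H_2(\Sigma')=0$, so the value of $\int_{C_2}\omega_\Sigma$ is an invariant of the boundary cycle $(t_a)_*b-c$ alone, independent of the choice of $C_2$. To evaluate this one signed area, I would replace $(t_a)_*b$, up to a zero-area correction supported in $\supp t_a\cup\supp b$ (combining Equation~(\ref{eq:defprop}) with the standard degenerate $2$-chain identifying a concatenation of loops with the corresponding formal sum of cycles), by the piecewise geodesic loop $\gamma_0$ that traces $b$ once and then wraps $\ai([b],[a])$ times around $a$ at the intersection point $p$. The problem thereby reduces to showing that $\gamma_0$ and $c$ cobound a chain of zero $\omega_\Sigma$-area in $\Sigma'$.

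Since $a$, $b$, and $c$ are all closed $h$-geodesics and $\omega_\Sigma$ is a constant multiple of $dV_h$, this final cobordism decomposes into a finite union of hyperbolic geodesic polygons, whose corners occur only at the intersection points of $\{a,b,c\}$; each polygon's area is determined by its Euler characteristic and its interior angle sum via Gauss--Bonnet. The main obstacle is the combinatorial bookkeeping showing that the signed sum of these polygon areas vanishes. I expect this to follow from the topological identity $[\gamma_0]=[c]$ in $H_1(\Sigma')$, which forces the Euler-characteristic contributions to telescope, together with the rigidity of $c$ as the unique geodesic in its free homotopy class, which forces the intersection angles of $c$ with $a$ and $b$ to complement the corner angles of $\gamma_0$ at $p$ so that the total angle sum cancels.
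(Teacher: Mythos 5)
Your first three paragraphs are essentially correct but only repackage the statement: by Equation~(\ref{eq:defprop}) together with a degenerate prism chain, the lemma is equivalent to saying that the concatenated loop $\gamma_0$ (once around $b$, then $\ai([b],[a])$ times around $a$ at $p$) and the geodesic $c$ cobound a chain of zero $\omega_\Sigma$--area in $\Sigma'$, and since $H_2(\Sigma')=0$ that area is indeed an invariant of the boundary. The genuine gap is your final paragraph, which is where all of the content of the lemma lies: ``I expect this to follow from\dots'' is not an argument, and the mechanism you guess is not the right one. The vanishing is not produced by a telescoping of Euler characteristics, nor by an angle-sum cancellation at $p$ between $c$ and the corner of $\gamma_0$ --- note that $c$ need not pass through $p$ at all, so there is nothing there for its intersection angles to ``complement,'' and the relevant regions each have strictly positive hyperbolic area rather than areas that cancel angle by angle.

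What is actually needed (and what the paper proves) is a configuration analysis of the three geodesics $a$, $b$, $c$, which pairwise have geometric intersection number one. Either they pass through a common triple point, or they meet in three distinct transverse double points. The triple-point case is impossible: the three geodesics would then cut off a geodesic triangle all of whose vertices are at that point, and since each interior angle of the triangle appears twice among the angles around the point, the angle sum would be $\pi$, forcing area zero by Gauss--Bonnet --- a contradiction. In the remaining case the three geodesics bound two closed triangles $R_1$ and $R_2$ whose angles at the three intersection points are vertical-angle pairs, so $R_1$ and $R_2$ have equal angle sums and hence equal hyperbolic area by Gauss--Bonnet; moreover a chain $C$ supported on $R_1\cup R_2$ with $\partial C=b+\ai([b],[a])a-c$ necessarily carries the two triangles with opposite orientations (as in the proof of Lemma~\ref{le:gottwists}), so $\int_C\omega_\Sigma=0$. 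Without this two-triangle argument, including the exclusion of the triple point, your sketch only shows that $\gamma_0$ and $c$ cobound \emph{some} chain in $\Sigma'$, not that its signed area vanishes, so the proof is incomplete at exactly the step the lemma is about.
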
 

\begin{proof}
We proceed by assuming that $\ai([b],[a])=1$.
Then $\ai([c],[a])=1$ and $\ai([b],[c])=1$.
Each pair of these geodesics intersects at a single transverse intersection point.
There are then two possibilities: the geodesics intersect at a single triple-intersection point, or there are three transverse double-intersections.

If we have a triple-intersection, then the geodesics cut the surface into two components: a triangle with all three vertices at the intersection point, and its complement (see Figure~\ref{fi:braidrelation}).
The angles around the intersection point include each of the interior angles of the triangle twice and no other angles.
Then the sum the interior angles of this triangle is $\pi$, and by the Gauss--Bonnet theorem, the area of this triangle is zero.
This is a contradiction.

\begin{figure}[ht!]
\labellist
\small\hair 2pt
\pinlabel $a$ [b] at 124 81 
\pinlabel $b$ [r] at 44 77 
\pinlabel $c$ [r] at 124 59
\endlabellist
\centering
\includegraphics[scale=0.65]{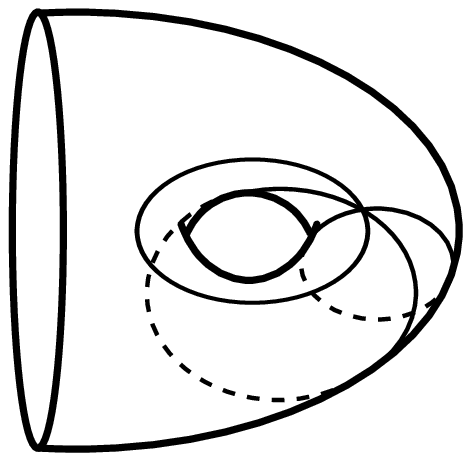}
\quad
\labellist
\small\hair 2pt
\pinlabel $R_1$ [b] at 47 106
\pinlabel $R_2$ [bl] at 125 114
\pinlabel $a$ [t] at 122 77
\pinlabel $b$ [r] at 44 77 
\pinlabel $c$ [b] at 98 103
\endlabellist
\centering
\includegraphics[scale=0.65]{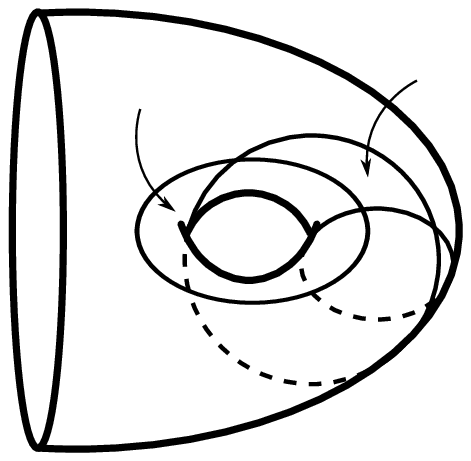}
\caption{An impossible triangle configuration of geodesics on a hyperbolic surface (\emph{left}), and a possible configuration (\emph{right}).}
\label{fi:braidrelation}
\end{figure}

So we have three double-intersections.
Then the geodesics cut the surface into three components: two closed triangles $R_1$ and $R_2$, and the rest of $\Sigma$.
As seen in Figure~\ref{fi:braidrelation}, each angle in $R_1$ is opposite to one of the angles in $R_2$.
Then $R_1$ and $R_2$ have the same angles, so by the Gauss--Bonnet theorem, they have the same area.
By the same reasoning as in the proof of Lemma~\ref{le:gottwists}, there is a chain $C$ supported on $R_1\cup R_2$ with $\partial C=b +\ai([b],[a])a-c$ and $\int_C\omega_\Sigma=0$.
A parallel argument applies if $\ai([b],[a])=-1$.
\end{proof}

\begin{lemma}\label{le:braidrelation}
Let $t_a$ be a symmetric symplectic Dehn twist around the geodesic $a$.
Let the simple closed curve $b$ be a geodesic with $|a\cap b|=1$, and let $c$ be the geodesic representative of $(t_a)_*b$.
Let $t_b$ and $t_c$ be symmetric symplectic Dehn twists around $b$ and $c$ respectively.
Then $\phi=t_c^{-1}t_at_bt_a^{-1}$ is in $\Ham$.
\end{lemma}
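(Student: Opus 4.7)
The plan is to apply Lemma~\ref{le:hamdetector} to $\phi$. For the proper subsurface, take $\Sigma'$ as in Lemma~\ref{le:intersect1}: a genus-one subsurface with one boundary component containing a regular neighborhood of $\supp a \cup \supp b \cup \supp c$ (this is proper since $g\geq 3$). By Lemma~\ref{le:twistdiff}, we are free to replace each of $t_a$, $t_b$, $t_c$ by a symmetric symplectic Dehn twist supported inside $\Sigma'$ without affecting the $\Ham$-coset of $\phi$, so we may assume $\supp\phi\subset\Sigma'$.

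To check the second hypothesis of Lemma~\ref{le:hamdetector}, observe that $[\phi|_{\Sigma'}]\in\clMn(\Sigma',\partial\Sigma')$ is trivial: the braid relation $T_c=T_aT_bT_a^{-1}$ holds in the mapping class group of any surface (with boundary, rel boundary) containing a neighborhood of $a\cup b$, since $c$ is in the free homotopy class of $T_a(b)$.

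The main step is to verify that $\phi_*\angb{d}=\angb{d}$ in $SH_1(\Sigma'\cup\supp d)$ for every simple closed curve $d$ in $\Sigma$; then choosing basis representatives $c_1,\ldots,c_{2g}$ for $H_1(\Sigma)$ consisting of simple closed curves completes the verification. Set $\epsilon=\ai([b],[a])\in\{\pm 1\}$, $p=\ai([d],[a])$, $q=\ai([d],[b])$, and $r=\ai([d],[c])=q+\epsilon p$. Apply the defining identity~(\ref{eq:defprop}) of symmetric symplectic Dehn twists four times in succession, starting from $\angb{d}$: first $t_a^{-1}$ yields $\angb{d}-p\angb{a}$; then $t_b$ yields $\angb{d}-p\angb{a}+r\angb{b}$ (using $\ai([a],[b])=-\epsilon$); then $t_a$ yields $\angb{d}+r\angb{b}+r\epsilon\angb{a}$; then $t_c^{-1}$ yields $\angb{d}+r\angb{b}+r\epsilon\angb{a}-r\angb{c}$ (using $\ai([b],[c])=1$ and $\ai([a],[c])=-\epsilon$). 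Finally, Lemma~\ref{le:intersect1} gives $\angb{c}=\angb{b}+\epsilon\angb{a}$ in $SH_1(\Sigma')\subset SH_1(\Sigma'\cup\supp d)$, so the last three terms cancel and $\phi_*\angb{d}=\angb{d}$.

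The main obstacle is purely the bookkeeping in this four-step $SH_1$-computation, in particular keeping track of which intersection numbers appear with which sign; everything else is a direct invocation of lemmas already established. Once this identity is in hand, Lemma~\ref{le:hamdetector} delivers $\phi\in\Ham$.
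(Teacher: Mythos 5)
Your proof is correct and follows essentially the same route as the paper: reduce to Lemma~\ref{le:hamdetector} after using Lemma~\ref{le:twistdiff} to confine the supports to a proper genus-one subsurface $\Sigma'$, then verify the $SH_1$-hypothesis by iterating Equation~(\ref{eq:defprop}) and cancelling via Lemma~\ref{le:intersect1}. The only difference is cosmetic: the paper checks invariance just on a basis consisting of $a$, $b$, and $2g-2$ curves disjoint from $\Sigma'$, treating the signs $\ai([b],[a])=\pm1$ separately, whereas you verify it for an arbitrary simple closed curve $d$ with the sign $\epsilon$ handled uniformly, and your bookkeeping is accurate.
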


\begin{proof}
Again, we start by assuming that $\ai([b],[a])=1$, so that $\ai([c],[a])=1$ and $\ai([b],[c])=1$.
Let $\Sigma'$ be a subsurface of $\Sigma$ of genus one with a single boundary component, such that $\Sigma'$ contains a neighborhood of $\supp a\cup \supp b\cup \supp c$.
Then $\Sigma'$ is a proper subsurface of $\Sigma$.
By Lemma~\ref{le:twistdiff}, we may shrink the supports of our twists so that each of $t_a$, $t_b$, and $t_c$ is supported on $\Sigma'$.
Since $\phi$ is the lift of a braid relation (which holds in all mapping class groups), we know $\phi|_\Sigma'$ maps to the trivial element in $\clMn(\Sigma',\partial\Sigma')$.

We can find a set of representatives of a basis of $H_1(\Sigma)$ that consists of $a$, $b$, and $2g-2$ curves that are disjoint from $\Sigma'$.
If the cycle $x$ is disjoint from the support of $\phi$, then $\phi_*\angb{x}=\angb{x}\in SH_1(\Sigma'\cup \supp x)$.
By Equation~(\ref{eq:defprop}),
\begin{align*}
\phi_*\angb{a}&=(t_c^{-1}t_at_bt_a^{-1})_*\angb{a}=(t_c^{-1}t_at_b)_*\angb{a}\\
&=(t_c^{-1}t_a)_*(\angb{a}-\angb{b})=(t_c^{-1})_*(-\angb{b})=\angb{c}-\angb{b}
\end{align*}
where these classes are understood to be in $SH_1(\Sigma')$ (which contains the curves and the supports of the twists).
But by Lemma~\ref{le:intersect1}, we know that $\angb{c}=\angb{a}+\angb{b}\in SH_1(\Sigma')$.
So we have $\phi_*\angb{a}=\angb{a}\in SH_1(\Sigma')$.
Similarly by Equation~(\ref{eq:defprop}) and Lemma~\ref{le:intersect1},
\[
\phi_*\angb{b}=(t_c^{-1}t_at_bt_a^{-1})_*\angb{b}
=2\angb{b}+\angb{a}-\angb{c}=\angb{b}\in SH_1(\Sigma').
\]
This completes the proof if $\ai([b],[a])=1$, and a parallel argument applies if $\ai([b],[a])=-1$.
\end{proof}

\begin{figure}[ht!]
\labellist
\small\hair 1pt
\pinlabel $a_1$ [b] at 163 154
\pinlabel $a_2$ [bl] at 93 148
\pinlabel $a_3$ [r] at 111 80
\pinlabel $b$ [b] at 122 149
\pinlabel $x$ [l] at 92 174
\pinlabel $y$ [b] at 88 63
\pinlabel $d_1$ [tr] at 69 86
\pinlabel $d_2$ [tl] at 183 93
\pinlabel $d_3$ [b] at 121 187
\endlabellist
\centering
\includegraphics[scale=0.65]{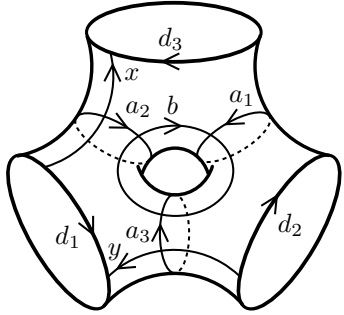}
\caption{The curves of a star relation and two reference curves.}
\label{fi:starrelation}
\end{figure}

Next we prove that a star or chain relation made out of symmetric symplectic Dehn twists around $h$--geodesics is also Hamiltonian.
\begin{lemma}\label{le:threepower}
Suppose $\Sigma'$ is a subsurface of $\Sigma$ of genus one  with three boundary components, and $a_1, a_2, a_3$ and $b$ are geodesics in $\Sigma'$ in the star relation configuration, as in Figure~\ref{fi:starrelation}.
Suppose $t_{a_1},t_{a_2},t_{a_3}$ and $t_b$ are symmetric symplectic Dehn twists around their respective curves.
Suppose we have an additional simple closed curve $x$ in $\Sigma$ and some $j$ with $|x\cap a_j|=1$ and $\ai([a_j],[x])=-1$, and suppose that $x$ does not intersect the other $a_k$ or $b$.
Let $\psi=(t_{a_1}t_{a_2}t_{a_3}t_b)^3$.
Then $\psi_*\angb{a_1}=\angb{a_1}\in SH_1(\Sigma')$, $\psi_*\angb{b}=\angb{b}\in SH_1(\Sigma')$, and $\psi_*\angb{x}=\angb{x}+\angb{a_1}+\angb{a_2}+\angb{a_3}-3\angb{a_j}\in SH_1(\Sigma'\cup \supp x)$.
\end{lemma}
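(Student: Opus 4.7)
The plan is a direct calculation using the defining property of symmetric symplectic Dehn twists, Equation~(\ref{eq:defprop}). Set $\tau = t_{a_1}t_{a_2}t_{a_3}t_b$ so that $\psi = \tau^3$. I would compute $\tau_*$ on each of $\angb{a_1},\angb{a_2},\angb{a_3},\angb{b}\in SH_1(\Sigma')$ and on $\angb{x}\in SH_1(\Sigma'\cup\supp x)$ using Equation~(\ref{eq:defprop}), and then iterate the formulas three times.

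First, from Equation~(\ref{eq:defprop}) together with the intersection pattern of the star configuration (the $a_i$ pairwise disjoint, each $a_i$ meeting $b$ once with a definite sign, and $x$ meeting only $a_j$ among these curves), I record the individual twist actions: $(t_{a_i})_*\angb{a_k}=\angb{a_k}$ for all $i,k$; $(t_{a_i})_*\angb{b}=\angb{b}+\ai([b],[a_i])\angb{a_i}$; $(t_b)_*\angb{a_i}=\angb{a_i}+\ai([a_i],[b])\angb{b}$; $(t_b)_*\angb{b}=\angb{b}$; $(t_{a_k})_*\angb{x}=\angb{x}$ for $k\ne j$; $(t_b)_*\angb{x}=\angb{x}$; and $(t_{a_j})_*\angb{x}=\angb{x}+\ai([x],[a_j])\angb{a_j}$. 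The chain witnessing each identity is supported in the union of the twist's support and the support of the cycle being acted on, so each equality takes place in the appropriate $SH_1$ group ($SH_1(\Sigma')$ for the $\angb{a_i},\angb{b}$ identities; $SH_1(\Sigma'\cup\supp x)$ for the $\angb{x}$ identities).

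Second, composing $t_b, t_{a_3}, t_{a_2}, t_{a_1}$ in order yields explicit formulas for $\tau_*\angb{a_i}$ and $\tau_*\angb{b}$ as $\Z$--linear combinations of $\{\angb{a_1},\angb{a_2},\angb{a_3},\angb{b}\}$, producing a $4\times 4$ integer matrix $M$. A direct computation (streamlined by the symmetry among the $a_i$) verifies $M^3=I$, which gives the first two identities $\psi_*\angb{a_1}=\angb{a_1}$ and $\psi_*\angb{b}=\angb{b}$. This matches what Gervais's star relation predicts: $\psi$ represents $T_{d_1}T_{d_2}T_{d_3}$, and each $d_i$ is disjoint from $a_1$ and $b$. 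For $x$, one has $\tau_*\angb{x}=\angb{x}+\ai([x],[a_j])\angb{a_j}$, and linearity together with the previously computed $\tau_*\angb{a_j}$ lets one expand $\tau^2_*\angb{x}$ and $\tau^3_*\angb{x}$; after collecting terms, the result simplifies to $\angb{x}+\angb{a_1}+\angb{a_2}+\angb{a_3}-3\angb{a_j}$, as required.

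The main obstacle is sign bookkeeping: one must carefully track $\ai([a_i],[b])$ and $\ai([x],[a_j])$ through three compositions of four twists, verify that the $4\times 4$ matrix $M$ really cubes to the identity, and check that the expansion of $\tau^3_*\angb{x}$ collapses to the tidy closed form above. Once $M$ is written down and its symmetry under permutation of $\{1,2,3\}$ is exploited, the remaining work is routine linear algebra.
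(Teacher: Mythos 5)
Your approach is exactly the one the paper intends: the paper's own ``proof'' of this lemma is a one-line remark that it is a standard computation with Equation~(\ref{eq:defprop}) in the style of Lemma~\ref{le:braidrelation}, left as an exercise, and your plan (record the single-twist actions, form the $4\times4$ matrix $M$ for $\tau=t_{a_1}t_{a_2}t_{a_3}t_b$ on $\angb{a_1},\angb{a_2},\angb{a_3},\angb{b}$, check $M^3=I$, then expand $\tau^3_*\angb{x}$ by linearity) is precisely that computation. Your individual twist formulas are correct, and $M^3=I$ does hold: writing $S=\angb{a_1}+\angb{a_2}+\angb{a_3}$, one gets $\tau_*\angb{a_i}=\angb{a_i}+\angb{b}-S$, $\tau_*\angb{b}=\angb{b}-S$, $\tau^2_*\angb{a_i}=\angb{a_i}-\angb{b}$, $\tau^3_*\angb{a_i}=\angb{a_i}$, $\tau^3_*\angb{b}=\angb{b}$ (and the same conclusion under the opposite sign convention for $\ai([a_i],[b])$ or the opposite composition order).

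The one place you deferred the work --- ``after collecting terms, the result simplifies to $\angb{x}+\angb{a_1}+\angb{a_2}+\angb{a_3}-3\angb{a_j}$, as required'' --- is exactly where the sign bookkeeping bites, and you should carry it out rather than assert it. With the hypothesis as literally stated, $\ai([a_j],[x])=-1$, i.e.\ $\ai([x],[a_j])=+1$, one has $\tau_*\angb{x}=\angb{x}+\angb{a_j}$, hence $\tau^2_*\angb{x}=\angb{x}+2\angb{a_j}+\angb{b}-S$ and $\tau^3_*\angb{x}=\angb{x}+3\angb{a_j}-S$, i.e.\ the \emph{negative} of the stated correction term; the stated conclusion $\angb{x}+S-3\angb{a_j}$ arises from $\ai([x],[a_j])=-1$, which is in fact the configuration used when the lemma is applied in Lemma~\ref{le:starrelation} (there $\ai([x],[a_2])=-1$). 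So the discrepancy is an internal sign inconsistency between the lemma's hypothesis and its conclusion rather than a defect of your method, and it is harmless downstream, since in Lemma~\ref{le:starrelation} the correction term is shown to be the boundary of an area-zero chain and so vanishes in $SH_1$ with either sign. But a complete write-up must either fix the hypothesis or report the formula with the sign dictated by it; simply asserting agreement with the printed statement, under the printed hypothesis, is not correct.
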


\begin{proof}
This is a standard computation using Equation~(\ref{eq:defprop}), similar to the computations in Lemma~\ref{le:braidrelation} (but without any use of Lemma~\ref{le:intersect1}).
We leave this as an exercise for the reader.
\end{proof}

\begin{lemma}\label{le:starrelation}
Suppose $t_{a_1},t_{a_2},t_{a_3}$ and $t_b$ are as in Lemma~\ref{le:threepower}, the non-separating geodesics $d_1$, $d_2$ and $d_3$ are as in Figure~\ref{fi:starrelation}, and $t_{d_1}$, $t_{d_2}$ and $t_{d_3}$ are symmetric symplectic Dehn twists around the respective curves.
Let $\phi = t_{d_1}^{-1}t_{d_2}^{-1}t_{d_3}^{-1}(t_{a_1}t_{a_2}t_{a_3}t_b)^3$.
Then $\phi\in \Ham$.
\end{lemma}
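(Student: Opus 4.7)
The plan is to apply Lemma~\ref{le:hamdetector} to $\phi$. First, I would let $\Sigma''$ be the slight enlargement of $\Sigma'$ obtained by adjoining regular neighborhoods of $d_1,d_2,d_3$ large enough to contain the supports of $t_{d_1},t_{d_2},t_{d_3}$. Then $\phi$ is supported on $\Sigma''$, which is still a genus-$1$ surface with three boundary components, and the assumption $g\geq 3$ makes $\Sigma''$ a proper subsurface of $\Sigma$. Since the star relation $(T_{a_1}T_{a_2}T_{a_3}T_b)^3=T_{d_1}T_{d_2}T_{d_3}$ is a genuine relation in any mapping class group containing these curves, $\phi|_{\Sigma''}$ is trivial in $\clMn(\Sigma'',\partial\Sigma'')$.

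Next I would choose a basis $c_1,\ldots,c_{2g}$ of $H_1(\Sigma)$ including $c_1=[a_1]$, $c_2=[b]$, $c_3=[d_1]$, $c_4=[d_2]$, cycles in $\Sigma\setminus\Sigma''$ as far as they go, and (when necessary, e.g.\ for $g=3$) a small number of simple closed \emph{crossing curves} meeting $\partial\Sigma''$ transversely; a Mayer--Vietoris computation verifies such a basis exists. The verification of $\phi_*\angb{c_i}=\angb{c_i}$ in $SH_1(\Sigma''\cup\supp c_i)$ is immediate for cycles disjoint from $\Sigma''$. For $c_i=[a_1]$ or $[b]$, Lemma~\ref{le:threepower} gives $\psi_*\angb{a_1}=\angb{a_1}$ and $\psi_*\angb{b}=\angb{b}$ in $SH_1(\Sigma')$, while Equation~\eqref{eq:defprop} makes each $(t_{d_i}^{-1})_*$ act trivially on these classes since $d_i$ is disjoint from $a_1$ and $b$. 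Similarly, each twist curve appearing in $\phi$ is disjoint from $d_j$, so $\phi_*\angb{d_j}=\angb{d_j}$.

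The main obstacle is the verification for the crossing curves. Because each $t_{d_i}$ commutes with every $t_{a_k}$ and $t_b$ (disjoint supports), I may rewrite $\phi=\psi\cdot t_{d_1}^{-1}t_{d_2}^{-1}t_{d_3}^{-1}$. Using $\psi_*\angb{d_i}=\angb{d_i}$, one obtains
\[
\phi_*\angb{x}=\psi_*\angb{x}-\sum_{i=1}^3\ai([x],[d_i])\angb{d_i}
\]
for any crossing curve $x$, so the verification reduces to
\[
\psi_*\angb{x}=\angb{x}+\sum_{i=1}^3\ai([x],[d_i])\angb{d_i}\quad\text{in }SH_1(\Sigma''\cup\supp x).
\]
The homological version of this identity is forced by the star relation, since $\psi$ and $T_{d_1}T_{d_2}T_{d_3}$ induce the same transformation on $H_1(\Sigma)$. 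The hard part will be lifting it to the $SH_1$ level: the $2$-chain algebraically forced by the star relation must have signed area zero. I expect this to follow by carefully tracking the zero-area bounding chains produced by the twelve applications of Equation~\eqref{eq:defprop} computing $\psi_*\angb{x}$, invoking the minimality of geodesic intersections together with Gauss--Bonnet in the same way Lemma~\ref{le:intersect1} does for the triangular regions appearing in the proof of the braid relation. This geometric cancellation is where the hyperbolic symmetry of the symmetric symplectic Dehn twists becomes essential.
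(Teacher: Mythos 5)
Your reduction is structurally the same as the paper's: restrict to a proper subsurface, note triviality in $\clMn(\cdot,\partial)$, pick a basis consisting of $a_1$, $b$, curves disjoint from the subsurface, and curves crossing the $d_i$, and feed everything into Lemma~\ref{le:hamdetector}. The problem is that the decisive step --- the verification for the crossing curves --- is exactly the point you leave as an expectation, and the mechanism you gesture at will not produce it. The twelve applications of Equation~\eqref{eq:defprop} give precisely Lemma~\ref{le:threepower}, namely $\psi_*\angb{x}=\angb{x}+\angb{a_1}+\angb{a_2}+\angb{a_3}-3\angb{a_j}$ (so you must also make sure your crossing curves satisfy its hypotheses: each meets exactly one $a_j$ once and misses $b$ and the other $a_k$, as the curves $x,y$ of Figure~\ref{fi:starrelation} do). After that, what remains of your target identity is the assertion that the cycle $a_1+a_2+a_3-3a_j-\sum_i\ai([x],[d_i])\,d_i$ (for $x$ this is $a_1-2a_2+a_3+d_1-d_3$, up to sign) bounds a $2$--chain of signed area zero. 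This is a statement about the six geodesics themselves, not about the twists, so no further bookkeeping of the zero-area chains coming from \eqref{eq:defprop} can yield it; moreover the Lemma~\ref{le:intersect1} mechanism (two triangles with equal angle sums cut out by minimally intersecting geodesics) is unavailable here, since $a_1,a_2,a_3,d_1,d_2,d_3$ are pairwise disjoint and bound no triangles. The ``hyperbolic symmetry'' of the twists has already been spent in establishing \eqref{eq:defprop} and Lemma~\ref{le:threepower}; it contributes nothing new at this stage.

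The missing input is a different application of Gauss--Bonnet: the geodesics cut $\Sigma'$ into three pairs of pants with geodesic boundary, bounded respectively by $\{a_1,a_2,d_3\}$, $\{a_2,a_3,d_1\}$ and $\{a_3,a_1,d_2\}$; each has Euler characteristic $-1$, hence hyperbolic area $2\pi$, hence the same $\omega_\Sigma$--area (as $dV_h$ is a constant multiple of $\omega_\Sigma$). Taking relative fundamental chains $D_1,D_2,D_3$ with $\partial D_1=a_2-a_1+d_3$, $\partial D_2=a_3-a_2+d_1$, $\partial D_3=a_1-a_3+d_2$, the chain $D_2-D_1$ has signed area zero and boundary exactly the discrepancy cycle for $x$, and $D_3-D_2$ does the same for the second crossing curve; this is what converts the Lemma~\ref{le:threepower} output into $\phi_*\angb{x}=\angb{x}$ and $\phi_*\angb{y}=\angb{y}$ in the appropriate $SH_1$ groups, after which Lemma~\ref{le:hamdetector} applies. (Your preliminary rearrangement $\phi=\psi\,t_{d_1}^{-1}t_{d_2}^{-1}t_{d_3}^{-1}$ and the treatment of $a_1$, $b$, $d_1$, $d_2$ and the exterior cycles are fine, provided you shrink supports via Lemma~\ref{le:twistdiff} so the twists genuinely commute.) Until this equal-area pairs-of-pants construction, or an equivalent one, is supplied, the proof is incomplete at its crux.
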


\begin{proof}
Let $\Sigma'$ be a subsurface of $\Sigma$ of genus one with three boundary components, such that $\Sigma'$ contains a regular neighborhood of the union of our seven curves.
By Lemma~\ref{le:twistdiff}, we may assume that $\phi$ is supported on $\Sigma'$.
Because the $d_i$ are non-separating, there is a set of representatives for a basis for $H_1(\Sigma)$ consisting of the following:
\begin{itemize}
\item the curves $a_1$ and $b$; 
\item a simple closed curve $x$ that does not intersect $a_1$, $a_3$, or $b$, and that intersects each of $a_2$, $d_1$, and $d_3$ only once, with $\ai([x],[a_2])=-1$, $\ai([x],[d_1])=-1$ and $\ai([x],[d_3])=1$;
\item a simple closed curve $y$ that does not intersect $a_1$, $a_2$, or $b$, and that intersects each of $a_3$, $d_1$ and $d_2$ only once, with $\ai([y],[a_3])=-1$, $\ai([y],[d_1])=1$ and $\ai([y],[d_2])=-1$;
\item and $2g-4$ cycles with support disjoint from $\Sigma'$.
\end{itemize}
The curves $a_1$, $b$, $x$ and $y$ are illustrated in Figure~\ref{fi:starrelation}.
Then by Lemma~\ref{le:threepower} and Equation~(\ref{eq:defprop}), we have $\phi_*\angb{a_1}=\angb{a_1}\in SH_1(\Sigma')$, $\phi_*\angb{b}=\angb{b}\in SH_1(\Sigma')$,
\[\phi_*\angb{x}=\angb{x}+\angb{a_1}-2\angb{a_2}+\angb{a_3}+\angb{d_1}-\angb{d_3}\in SH_1(\Sigma'\cup \supp x)\]
and
\[\phi_*\angb{y}=\angb{y}+\angb{a_1}+\angb{a_2}-2\angb{a_3}-\angb{d_1}+\angb{d_2}\in SH_1(\Sigma'\cup \supp y).\]

Let $D_1$, $D_2$ and $D_3\in C_2(\Sigma')$ with $\partial D_1= a_2-a_1+d_3$,
 $\partial D_2= a_3-a_2+d_1$, and $\partial D_3=a_1-a_3+d_2$, such that each $D_i$ is supported on a subsurface of $\Sigma'$ of genus zero.
Each $D_i$ has geodesic boundary with three boundary components, and the orientation on the boundary of $D_i$ induces an orientation on this subsurface that agrees with the orientation of $\Sigma$.
So by the Gauss--Bonnet theorem, we know that $\int_{D_1}\omega_\Sigma=\int_{D_2}\omega_\Sigma=\int_{D_3}\omega_\Sigma$.
In particular, 
$\phi_*\angb{x}=\angb{x}+\angb{\partial(D_2-D_1)}$, and $\int_{D_2-D_1}\omega_\Sigma=0$, so $\phi_*\angb{x}=\angb{x}\in SH_1(\Sigma'\cup \supp x)$.
Similarly, 
$\phi_*\angb{y}=\angb{y}+\angb{\partial(D_3-D_2)}$, so $\phi_*\angb{y}=\angb{y}\in SH_1(\Sigma'\cup \supp y)$.
Then $\phi$ fixes the class in $SH_1(\Sigma'\cup c)$ of each element $c$ of our set of basis representatives for $H_1(\Sigma)$.
Since $\phi$ is a lift of the star relation, we know $1=[\phi|_{\Sigma'}]\in\clMn(\Sigma',\partial\Sigma')$ (and $\Sigma'$ is a proper subsurface of $\Sigma$), so it then follows from Lemma~\ref{le:hamdetector} that $\phi\in\Ham$.
\end{proof}

\begin{lemma}\label{le:chainrelation}
Suppose $t_{a_1},t_{a_2}$, $t_b$, $t_{d_1}$ and $t_{d_3}$ are as in Lemma~\ref{le:starrelation}, the non-separating geodesics $d_1$, and $d_3$ are as in Figure~\ref{fi:starrelation}, and we have a curve $d_2$ as in Figure~\ref{fi:starrelation} that bounds a disk.
Let $\phi = t_{d_1}^{-1}t_{d_3}^{-1}(t_{a_1}t_{a_2}t_{a_1}t_b)^3$.
Then $\phi\in \Ham$.
\end{lemma}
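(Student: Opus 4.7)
The plan is to follow the strategy of Lemma~\ref{le:starrelation} closely, with modifications reflecting the fact that $d_2$ bounds a disk rather than a non-separating geodesic.
Let $\Sigma'$ be a proper subsurface of $\Sigma$ of genus one with boundary $d_1\cup d_3$, chosen to contain a regular neighborhood of $\supp a_1\cup\supp a_2\cup\supp b\cup\supp d_1\cup\supp d_3$ together with the disk that $d_2$ bounds.
By Lemma~\ref{le:twistdiff}, I may assume each of $t_{a_1}, t_{a_2}, t_b, t_{d_1}, t_{d_3}$ is supported on $\Sigma'$, so that $\phi$ itself is supported on $\Sigma'$.
Since $\phi$ is a lift of the chain relation, which holds in $\clM$, the restriction $\phi|_{\Sigma'}$ represents the trivial element of $\clMn(\Sigma',\partial\Sigma')$.

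Next I would compute the action of $\psi=(t_{a_1}t_{a_2}t_{a_1}t_b)^3$ on $SH_1$--classes by adapting the proof of Lemma~\ref{le:threepower}, substituting $a_3$ by $a_1$ throughout.
This yields $\psi_*\angb{a_1}=\angb{a_1}$ and $\psi_*\angb{b}=\angb{b}$ in $SH_1(\Sigma')$, together with an explicit formula $\psi_*\angb{x}=\angb{x}+c_1\angb{a_1}+c_2\angb{a_2}$ in $SH_1(\Sigma'\cup\supp x)$ for integers $c_1,c_2$, whenever $x$ is a simple closed curve disjoint from $b$ meeting one of $a_1, a_2$ transversely once.
I then choose a basis of $H_1(\Sigma)$ consisting of $a_1$, $b$, a simple closed curve $x$ crossing $\Sigma'$ in a single arc meeting $a_2$ once and disjoint from $a_1$ and $b$, together with $2g-3$ cycles with support disjoint from $\Sigma'$; this is possible because $d_1$ and $d_3$ are non-separating in $\Sigma$.
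Applying Equation~(\ref{eq:defprop}) to $t_{d_1}^{-1}t_{d_3}^{-1}$ expresses $\phi_*\angb{x}-\angb{x}$ as an integer combination of $\angb{a_1}, \angb{a_2}, \angb{d_1}, \angb{d_3}$ in $SH_1(\Sigma'\cup\supp x)$.
Since $\phi$ lifts a relation in $\clM$, this combination is null-homologous in $\Sigma'$, so it is realized as $\partial D$ for some $D\in C_2(\Sigma')$; to arrange $\int_D\omega_\Sigma=0$, I combine the two geodesic pants of area $2\pi$ obtained from a pants decomposition of $\Sigma'$ along a suitable separating geodesic with an integer multiple of the fundamental chain $[\Sigma']$, whose area is $-2\pi\chi(\Sigma')=4\pi$ by Gauss--Bonnet.
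Thus $\phi_*\angb{x}=\angb{x}$ in $SH_1(\Sigma'\cup\supp x)$, and $\phi$ manifestly fixes the $SH_1$--classes of $a_1$, $b$, and each remaining basis cycle, so Lemma~\ref{le:hamdetector} yields $\phi\in\Ham$.

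The step I expect to be most delicate is the Gauss--Bonnet balancing.
In Lemma~\ref{le:starrelation}, three boundary geodesics of $\Sigma'$ yield three geodesic pants of area $2\pi$ whose signed differences give boundaries with zero area directly; here only $d_1$ and $d_3$ are geodesics while $d_2$ has been replaced by a disk, so fewer pants are available and the cancellation has to be assembled from pants together with a multiple of $[\Sigma']$.
Ensuring that the integer coefficients produced by Equation~(\ref{eq:defprop}) match an available area-zero combination is the main bookkeeping challenge---in particular, one must verify that the null-homology of the boundary combination in $\Sigma'$ lies in the $\Z$-span of the two pants' boundaries and of $\partial[\Sigma']$.
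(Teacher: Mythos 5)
Your skeleton is the paper's: the same genus-one subsurface $\Sigma'$ with boundary $d_1\cup d_3$, Lemma~\ref{le:twistdiff} to push all supports into $\Sigma'$, Lemma~\ref{le:threepower} with $a_3$ replaced by $a_1$, a basis consisting of $a_1$, $b$, $x$ and $2g-3$ curves missing $\Sigma'$, and Lemma~\ref{le:hamdetector} at the end. The gap is exactly the step you flag but do not carry out: showing that the correction cycle, which in your notation (with $x$ meeting $a_2$ once) is explicitly $2\angb{a_1}-2\angb{a_2}+\angb{d_1}-\angb{d_3}$, equals $\angb{\partial D}$ for some $D\in C_2(\Sigma')$ with $\int_D\omega_\Sigma=0$. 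Your proposed mechanism for this cannot work as described. Since $H_2(\Sigma')=0$, any two $2$--chains in $\Sigma'$ with the same boundary differ by the boundary of a $3$--chain and so have the same $\omega_\Sigma$--area; the area is therefore \emph{determined} by the boundary and must be proved to vanish, not tuned afterwards. In particular, adding an integer multiple of the fundamental chain of $\Sigma'$ is not an available adjustment: it changes the boundary by that multiple of $d_1+d_3$ while adding nonzero area, so it cannot correct the area of a chain with prescribed boundary. Also, a genus-one surface with two boundary circles is not cut into two pairs of pants by a single separating geodesic (a pants decomposition here requires two curves), and your null-homology claim is only justified in $\Sigma$ (from triviality of $\phi_*$ on $H_1(\Sigma)$), whereas you need it in $\Sigma'$.

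The correct completion, which is what the paper's phrase ``by the same reasoning as in the proof of Lemma~\ref{le:starrelation}'' amounts to, is the following. Let $D_1,D_2\in C_2(\Sigma')$ be fundamental chains (relative to boundary) of the two components of $\Sigma'$ cut along $a_1\cup a_2$, oriented so that $\partial D_1=a_2-a_1+d_3$ and $\partial D_2=a_1-a_2+d_1$. Each is a pair of pants with geodesic boundary, so by Gauss--Bonnet they have equal hyperbolic area, hence equal $\omega_\Sigma$--area since $dV_h$ is a constant multiple of $\omega_\Sigma$. Then $\partial(D_2-D_1)=2a_1-2a_2+d_1-d_3$ is precisely the correction term and $\int_{D_2-D_1}\omega_\Sigma=0$, which gives $\phi_*\angb{x}=\angb{x}\in SH_1(\Sigma'\cup\supp x)$; this both supplies the null-homology in $\Sigma'$ and the vanishing of the area, and no multiple of the fundamental chain of $\Sigma'$ enters. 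With this step supplied, the remainder of your argument goes through and coincides with the paper's proof.
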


\begin{proof}
Let $\Sigma'$ be a  subsurface of $\Sigma$ of genus one with two boundary components, such that $\Sigma'$ contains a regular neighborhood of the union of these five curves.
By Lemma~\ref{le:twistdiff}, we assume that $\phi$ is supported on $\Sigma'$.
Let $x$ be as in the proof of Lemma~\ref{le:starrelation}.
Since $d_1$ and $d_3$ are non-separating, we have a set of basis representatives consisting of $a_1$, $b$, $x$ and $2g-3$ curves that are disjoint from the support of $\phi$.
By the same reasoning as in the proof of Lemma~\ref{le:starrelation}, we have $\phi_*\angb{a_1}=\angb{a_1}\in SH_1(\Sigma')$, $\phi_*\angb{b}=\angb{b}\in SH_1(\Sigma')$, and $\phi_*\angb{x}=\angb{x}\in SH_1(\Sigma'\cup \supp x)$ (Lemma~\ref{le:threepower} still applies, with $a_1=a_3$ since $d_2$ bounds a disk).
Then since $\phi_*$ is a lift of the chain relation, $1=[\phi|_{\Sigma'}]\in\clMn(\Sigma',\partial\Sigma')$.
Since the genus of $\Sigma$ is greater than two, we know $\Sigma'$ is a proper subsurface of $\Sigma$, and it follows from Lemma~\ref{le:hamdetector} that $\phi\in\Ham$.
\end{proof}

\begin{remark}
Lemma~\ref{le:chainrelation} and Proposition~\ref{pr:ssdtsympo} fail if $g=2$.
Suppose we have constructed $\Sigma$ from the surface in Figure~\ref{fi:starrelation} by gluing a disk into $d_2$ and gluing $d_1$ to $d_3$ in orientation-preserving fashion.
Suppose we have done this such that the reference arc $x$ becomes a simple closed curve.
Let $\phi$ be as in Lemma~\ref{le:chainrelation}.
Let $Y$ be a union of small regular neighborhoods of the marked curves.
Lemma~\ref{le:threepower} shows $\phi_*\angb{x}=\angb{x}+2\angb{a_1}-2\angb{a_2}+2\angb{d_1}$ in $SH_1(Y)$.
This means that there is an area-zero $2$--chain $D_1$, supported on $Y$, such that  $\partial D_1=x-2a_1-2a_2+2d_1-\phi_*x$.
Let $D_2$ be a fundamental class, relative to the boundary, for the subsurface bounded by the geodesics $a_1$, $a_2$, and $d_1$, with $\partial D_2=a_1+a_2-d_1$.
Pick a section $s$ with $s([x])=x$, and pick a basepoint not in $Y\cup \supp D_2$.
Then $\Fl(\phi)([x])=\Flsec{s}(\phi)([x])=\int_{D_1+2D_2}\omega_\Sigma$, which is the area of $\Sigma$ and is nonzero.
So if $g=2$, we have a product of symmetric symplectic Dehn twists around $h$--geodesics that is in $\Sympo$ but not in $\Ham$.
\end{remark}

\begin{proof}[Proof of Proposition~\ref{pr:ssdtsympo}]
Pick a set $S\subset\Symp$ of symmetric symplectic Dehn twists around simply closed geodesics, one for each free homotopy class.
By replacing each twist in the composition of $\phi$ with an element of $S\cup S^{-1}$, we get a new map $\phi'$.
By Lemma~\ref{le:twistdiff}, $\phi'$ is in the same coset of $\Ham$ as $\phi$.
The composition of $\phi'$ describes a word $\tilde w$ in $S$.
Let $w$ be the same word with each letter in $S$ replaced by its image in $\clM$.
Since $\phi'\in\Sympo$, the word $w$ represents the trivial element of $\clM$, and is a product of conjugates of the relations in Theorem~\ref{th:presentation}.
Then since $\Ham$ is normal in $\Symp$, $\phi'$ is in $\Ham$ if all of the relations in Theorem~\ref{th:presentation} are in $\Ham$, when lifted to $\Symp$ using elements of $S$.

If $a$ and $b$ are disjoint geodesics and $t_a, t_b\in S$ are twists around them, then by Lemma~\ref{le:twistdiff}, we may assume that their supports are disjoint, in which case they commute.
This means that relation~(\ref{it:comm}) is satisfied.
The other relations are satisfied because of Lemma~\ref{le:braidrelation}, Lemma~\ref{le:starrelation} and Lemma~\ref{le:chainrelation}.
\end{proof}

Recall that $\Flmet{h}(\phi)=\Fl(\phi\sigma([\phi])^{-1})$.
\begin{proposition}
The map $\Flmet{h}$ is a well-defined crossed homomorphism extending $\Fl$.
\end{proposition}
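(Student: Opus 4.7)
The plan is to read off well-definedness and the crossed-homomorphism identity directly from the fact that $\hat\sigma_h$ splits the exact sequence
\[1\to H^1(\Sigma;\R)\to \Symp/\Ham\to \clM \to 1,\]
as already observed in the introduction, with $H^1(\Sigma;\R)$ identified with $\Sympo/\Ham$ via the isomorphism induced by $\Fl$.

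First I would check that $\Flmet{h}(\phi)$ makes sense. For $\phi\in\Symp$, the expression $\phi\,\hat\sigma_h([\phi])^{-1}$ is naturally a coset of $\Ham$ in $\Symp$; since $\hat\sigma_h$ is a section to the projection $\Symp/\Ham\to\clM$, this coset projects to the identity in $\clM$, so it lies in the subgroup $\Sympo/\Ham$. By definition of $\Ham$ as the kernel of $\Fl|_{\Sympo}$, the flux homomorphism descends to an isomorphism $\Sympo/\Ham\to H^1(\Sigma;\R)$, so $\Fl(\phi\,\hat\sigma_h([\phi])^{-1})$ is an unambiguous element of $H^1(\Sigma;\R)$. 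The extension claim is then immediate: if $\phi\in\Sympo$ then $[\phi]=1\in\clM$, so $\hat\sigma_h([\phi])$ is the identity coset and $\Flmet{h}(\phi)=\Fl(\phi)$.

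Next I would verify the twisted cocycle identity. Given $\phi,\psi\in\Symp$, use $\hat\sigma_h([\phi\psi])^{-1}=\hat\sigma_h([\psi])^{-1}\hat\sigma_h([\phi])^{-1}$ (since $\hat\sigma_h$ is a homomorphism by Theorem~\ref{mt:symmsect}) and insert $\hat\sigma_h([\phi])^{-1}\hat\sigma_h([\phi])$ to rewrite
\[\phi\psi\,\hat\sigma_h([\phi\psi])^{-1}
=\bigl(\phi\,\hat\sigma_h([\phi])^{-1}\bigr)\cdot\bigl(\hat\sigma_h([\phi])\,\psi\,\hat\sigma_h([\psi])^{-1}\hat\sigma_h([\phi])^{-1}\bigr).\]
Both factors lie in $\Sympo/\Ham$. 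Since the conjugation action of $\Symp/\Ham$ on its normal subgroup $\Sympo/\Ham\cong H^1(\Sigma;\R)$ factors through $\clM$ and agrees with the tautological $\Symp$--action on $H^1(\Sigma;\R)$, the second factor equals $\phi\cdot\Flmet{h}(\psi)$ once we apply $\Fl$. Applying $\Fl$ to the product and using that $\Fl$ is a homomorphism on $\Sympo/\Ham$ gives
\[\Flmet{h}(\phi\psi)=\Flmet{h}(\phi)+\phi\cdot\Flmet{h}(\psi),\]
which is the crossed homomorphism identity.

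The only substantive point hidden in this argument is the identification of the conjugation action of $\Symp/\Ham$ on $\Sympo/\Ham$ with the $\clM$--module structure on $H^1(\Sigma;\R)$; this is essentially the compatibility $\Fl(\phi\eta\phi^{-1})=\phi\cdot\Fl(\eta)$ for $\phi\in\Symp$, $\eta\in\Sympo$, which one can verify by dragging a representative loop along a flux homotopy for $\eta$ and comparing with its image under $\phi$. Everything else is formal manipulation of cosets, so this identification is the only genuine obstacle, and it is standard.
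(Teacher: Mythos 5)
Your proposal is correct and follows essentially the same route as the paper: both arguments reduce everything to Theorem~\ref{mt:symmsect} (that $\hat\sigma_h$ is a well-defined homomorphism splitting the projection), the fact that $\Ham=\ker(\Fl|_{\Sympo})$, and the $\Symp$--equivariance of $\Fl$, followed by the same insertion/regrouping trick to get the cocycle identity. The only cosmetic difference is that you conjugate by $\hat\sigma_h([\phi])$ and observe the action factors through $\clM$, while the paper inserts $\phi^{-1}\phi$ and applies equivariance to conjugation by $\phi$ directly.
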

\begin{proof}
Recall that $\Fl$ is a $\Symp$--equivariant homomorphism.
Then for $\phi,\psi\in\Symp$, we have
\begin{align*}\Flmet{h}(\phi\psi)&=\Fl(\phi\psi\sigma([\phi\psi])^{-1})=\Fl(\phi\psi\sigma([\psi])^{-1}\sigma([\phi])^{-1})\\
&=\Fl(\phi\psi\sigma([\psi])^{-1}\phi^{-1}\phi\sigma([\phi])^{-1})\\
&=\phi\cdot\Fl(\psi\sigma([\psi])^{-1})+\Fl(\phi\sigma([\phi])^{-1})\\
&=\phi\cdot\Flmet{h}(\psi)+\Flmet{h}(\phi).
\end{align*}
Here we are using that $\Fl(\sigma([\phi\psi])\sigma([\psi])^{-1}\sigma([\phi])^{-1})=0$, which is true by Theorem~\ref{mt:symmsect}.
If $\phi\in\Sympo$, then $\sigma([\phi])=\sigma(1)\in\Ham$, again by Theorem~\ref{mt:symmsect}.
So $\Flmet{h}(\phi)=\Fl(\phi)$ for $\phi\in\Sympo$.
Since $h$--symmetric sections define a unique homomorphism $\hat\sigma$, it follows that if we define $\Flmet{h}$ using a different $h$--symmetric section $\sigma$, we get the same map $\Flmet{h}$.
\end{proof}

\bibliographystyle{amsplain}
\bibliography{efjh}

\noindent
Department of Mathematics 253-37\\
California Institute of Technology\\
Pasadena, CA 91125\\
E-mail: {\tt mattday@caltech.edu}
\medskip

\end{document}